\newtheorem{theorem}{Theorem}[section]
\newtheorem{lem}{Lemma}[section]
\newtheorem{rem}{Remark}[section]
\newcounter{hypA}
\newenvironment{hypA}{\refstepcounter{hypA}\begin{itemize}
  \item[({\bf A\arabic{hypA}})]}{\end{itemize}}
\newcounter{hypB}
\newcounter{hypD}
\newcounter{hypW}
\newenvironment{hypW}{\refstepcounter{hypW}\begin{itemize}
  \item[({\bf W\arabic{hypW}})]}{\end{itemize}}
\date{}
\begin{document}

\begin{center}

{\Large \textbf{Multilevel Particle Filters for Partially Observed McKean-Vlasov Stochastic Differential Equations}}

\vspace{0.5cm}

BY  ELSIDDIG AWADELKARIM $^{1}$ \& AJAY JASRA$^{2}$

{\footnotesize $^{1}$Applied Mathematics and Computational Science Program,  Computer, Electrical and Mathematical Sciences and Engineering Division, King Abdullah University of Science and Technology, Thuwal, 23955-6900, KSA.}\\
{\footnotesize $^{2}$School of Data Science, The Chinese University of Hong Kong, Shenzhen, CN.}
{\footnotesize E-Mail:\,} \texttt{\emph{\footnotesize elsiddigawadelkarim.elsiddig@kaust.edu.sa; ajayjasra@cuhk.edu.cn
}}

\end{center}

\begin{abstract}
In this paper we consider the filtering problem associated to partially observed McKean-Vlasov stochastic differential equations (SDEs). The model consists of data that are observed at regular and discrete times and the objective is to compute the conditional expectation of (functionals) of the solutions of the SDE at the current time.
This problem, even the ordinary SDE case is challenging and requires numerical approximations. Based upon the ideas in \cite{ml_is_mv,dosreis} we develop a new particle filter (PF) and multilevel particle filter (MLPF) to approximate the afore-mentioned expectations. We prove under assumptions that, for $\epsilon>0$, to obtain a mean square error of $\mathcal{O}(\epsilon^2)$ the PF has a cost per-observation time of $\mathcal{O}(\epsilon^{-5})$ and the MLPF costs $\mathcal{O}(\epsilon^{-4})$ (best case) or $\mathcal{O}(\epsilon^{-4}\log(\epsilon)^2)$ (worst case). Our theoretical results are supported by numerical experiments.
\\
\noindent \textbf{Key words}: McKean-Vlasov SDEs,  Importance Sampling,  Multilevel Particle Filters.
\end{abstract}

\section{Introduction}

We consider the stochastic differential equation (SDE) with $X_0=x_0\in\mathbb{R}^d$ fixed:
\begin{equation}\label{eq:sde}
dX_t = a\left(X_t,\overline{\xi}_1(X_t,\mu_t)\right)dt + b\left(X_t,\overline{\xi}_2(X_t,\mu_t)\right)dW_t
\end{equation}
where for $j\in\{1,2\}$
$$
\overline{\xi}_j(X_t,\mu_t)  =  \int_{\mathbb{R}^d}\xi_j(X_t,x)\mu_t(dx)
$$
$\{W_t\}_{t\geq 0}$ is a standard $d-$dimensional Brownian motion,  $\xi_j:\mathbb{R}^{2d}\rightarrow\mathbb{R}$, $a:\mathbb{R}^d\times\mathbb{R}\rightarrow\mathbb{R}^d$, $b:\mathbb{R}^d\times\mathbb{R}\rightarrow\mathbb{R}^{d\times d}$ and $\mu_t$ is the law of $X_t$.  This is the collection McKean-Vlasov SDEs
and finds a wide variety of real applications such as biology and finance \cite{bush,dob}, in addition they have been shown to be equivalent representation of solutions of classes of nonlinear partial differential equations \cite{barbu},  and one can expect even more applications in the future. 
The exact problem that will be addressed in this article is where the process defined in \eqref{eq:sde} is only observed with noise at regular and discrete times. The objective is then to compute the conditional expectation of $X_t$ (or functionals thereof) given all the observations recursively at every observation time, which is a filtering problem; see e.g.~\cite{bain,cappe} for an introduction.

The filtering of continuous-time processes is very challenging. First, even if the transition densities of the process exist and are analytically tractable,  filtering must normally be perfomed by using numerical methods of which we focus upon particle filters (PF); see e.g.~\cite{bain,cappe,delmoral}. Second,  the afore-mentioned transition densities are seldom available and hence one normally has to resort to a time discretization of the process; see for instance \cite{mlpf}. In the case of regular diffusions (i.e.~where the coefficients do no depend on $\mu_t$), there is already a substantial literature on using PFs for solving the filtering problem; see e.g.~\cite{delmoral1,fearn}.  In addition to these methods PFs have been extended to leverage on the popular multilevel Monte Carlo (MLMC) method \cite{giles,giles1,hein}.
These multilevel PFs (MLPFs) \cite{mlpf,mlpf1,anti_mlpf} can reduce the cost to compute the filtering expectation for a given mean square error (MSE).

To the best of our knowledge,  there is little work in the context of filtering McKean-Valasov SDEs. However, when
trying to compute expectations with respect to (w.r.t.) the laws of such processes, several methods have been proposed. This includes the initial work of \cite{basic_method} that relies on time discretization and simulation coupled with solving backward Kolmogorov equations; this can be prohibitive if $d$ is even moderate.  Several recent extensions based upon using the afore-mentioned simulation and designing change of measures for importance sampling have been considered in \cite{dosreis} and then combining with MLMC \cite{ml_is_mv} or multi-index MC (see \cite{haji}) \cite{mi_is_mv}.  An iterative method, combined with MLMC was also proposed in \cite{szp}.  Despite these high quality and interesting contributions, none of these methods are designed for the filtering problem and would likely have several deficiencies.  For instance,  if the change of measure methods would be adopted it is likely that they would suffer from the well-known weight-degeneracy problem (e.g.~\cite{cappe,delmoral}). That is,  the estimates of the filter would suffer an exponential increase of variance w.r.t.~to observation times. The approach of \cite{szp} is iterative and thus is not appropriate for the filtering problem as one seeks fast (non-iterative) methods to assimilate the data.

In this paper we develop new PF and MLPF methodology to estimate the filter, associated to partially observed Mckean-Vlasov SDEs, recursively in time. This requires some care as, unlike the case of filtering ordinary diffusions, even under a time-discretization, one cannot simulate the SDE. This is because the law of the discretized process is not known and must be estimated. We show how the idea adopted in \cite{ml_is_mv,dosreis} can be leveraged. That is, to use the particle system of \cite{basic_method} to be fed into a `classical' PF and this resulting algorithm can be used to recursively estimate the filter. We then show how this idea can be extended to the multilevel context by combining with ideas from \cite{mlpf,  mlpf_cts}; see \cite{ml_rev} for a review and \cite{ubpf} for related methodology.
We prove, under assumptions, that for, $\epsilon>0$ to achieve a MSE, associated to estimating the filter, of $\mathcal{O}(\epsilon^2)$ per-observation time:
\begin{itemize}
\item{the cost for the PF is $\mathcal{O}(\epsilon^{-5})$}
\item{the cost for the MLPF is $\mathcal{O}(\epsilon^{-4})$ (best case) or $\mathcal{O}(\epsilon^{-4}\log(\epsilon)^2)$ (worst case).}
\end{itemize}
The best case corresponds to the case where the diffusion coefficient $b$ in \eqref{eq:sde} is constant and the other case is for the non-constant case - of course only when the SDE follows our assumptions.
The costs above are higher than one might expect for ordinary Monte Carlo based methods (of which PFs and MLPFs are). These increased costs are primarily associated to having to approximate the laws $\mu_t$ (and their time-discretized versions) in the evolution of the SDE.  There are possible ways to deal with this, but it is not investigated in this paper. The MSE to cost results are verified in several numerical examples.


%

This article is structured as follows. In Section \ref{sec:approach} we give our approach to filtering partially observed McKean-Vlasov SDEs.  In Section \ref{sec:theory} our theoretical results are presented.  Section \ref{sec:numerics} details our numerical results associated to our methodology.  Our proofs can be found in the Appendix.

\section{Approach}\label{sec:approach}

\subsection{State-Space Model}

We denote by $P_{\mu_{t-1},t}(x_{t-1},dx_t)$ the conditional law of $X_t$ (as given in \eqref{eq:sde}) given $\mathscr{F}_{t-1}$ (the natual filtration of the process), for $t\geq 1$; that is, the transition kernel over unit time. We consider a discrete time observation
process $Y_1,Y_2,\dots$,  $Y_t\in\mathsf{Y}$, that are assumed, for notational convenience, to be observed at unit times. Conditional on the position $X_t$, $t\in\mathbb{N}$ of \eqref{eq:sde}, the random variable $Y_t$ is assumed to be independent of all other random variables, with a bounded and positive probability density $G(x_t,y_t)$.

Let $\varphi:\mathbb{R}^d\rightarrow\mathbb{R}$ be a bounded and measurable function (write such a collection of functions $\mathcal{B}_b(\mathbb{R}^d)$), then we define the filtering expectation for $t\in\mathbb{N}$ as:
$$
\pi_t(\varphi) := \frac{\int_{\mathbb{R}^{dt}}\varphi(x_t)\left\{\prod_{p=1}^t G(x_p,y_p)\right\}\prod_{p=1}^tP_{\mu_{p-1},p}(x_{p-1},dx_p)}{\int_{\mathbb{R}^{dt}}\left\{\prod_{p=1}^t G(x_p,y_p)\right\}\prod_{p=1}^tP_{\mu_{p-1},p}(x_{p-1},dx_p)}
$$
where $\mu_0=\delta_{\{x_0\}}$ (Dirac measure on the set $\{x_0\}$). We remark that one does not need $\varphi$
and each of the $G(\cdot,y_t)$ to be bounded, but it will simplify the resulting exposition to do so.

In most cases of practical interest, $\mu_t$ and the dynamics $P_{\mu_{t-1},t}(x_{t-1},dx_t)$ are difficult to work with.  For instance the transition kernel cannot be simulated in many problems. As a first step to approximate the filter $\pi_t$, we introduce a time-discretization over a regular grid of spacing $\Delta_l=2^{-l}$, $l\in\mathbb{N}_0$. The significance of subscript $l$ is that it will denote the level of discretization - as $l$ grows so will the accuracy of the time discretization. We will use the Euler-Maruyama method associated to \eqref{eq:sde} and denote the law at any time $t\in\{0,\Delta_l,2\Delta_l,\dots\}$ as $\mu_t^l$. That is, we now consider the approximation for $k\in\mathbb{N}_0$:
\begin{eqnarray}
X_{k\Delta_l} & = & X_{(k-1)\Delta_l} + a\left(X_{(k-1)\Delta_l},\overline{\xi}_1(X_{(k-1)\Delta_l},\mu_{(k-1)\Delta_l}^l)\right) +\nonumber \\ & &b\left(X_{(k-1)\Delta_l},\overline{\xi}_2(X_{(k-1)\Delta_l},\mu_{(k-1)\Delta_l}^l)\right)\left[W_{k\Delta_l} - W_{(k-1)\Delta_l}\right]\label{eq:euler}
\end{eqnarray}
where $X_0=x_0$ and $\mu_0^l=\delta_{\{x_0\}}$.  Associated to \eqref{eq:euler}, we denote by $P_{\mu_{t-1},t}^l(x_{t-1},dx_t)$ the conditional law of $X_t$, $t\in\mathbb{N}$,  given $\mathscr{F}_{t-1}$ for $t\geq 1$; that is, the transition kernel over unit time induced by \eqref{eq:euler}.  It should be remarked that in many cases, 
\eqref{eq:euler} cannot be simuated exactly as the expectations associated to $\mu_{(k-1)\Delta_l}^l$ cannot be computed even if one knows $\mu_{(k-1)\Delta_l}^l$, which is again unlikely.

Our objective now is to compute an approximation of the time discretized filtering expectation for $(t,l,\varphi)\in\mathbb{N}\times\mathbb{N}_0\times\mathcal{B}_b(\mathbb{R}^d)$:
$$
\pi_t^l(\varphi) := \frac{\int_{\mathbb{R}^{dt}}\varphi(x_t)\left\{\prod_{p=1}^t G(x_p,y_p)\right\}\prod_{p=1}^tP_{\mu_{p-1}^l,p}^l(x_{p-1},dx_p)}{\int_{\mathbb{R}^{dt}}\left\{\prod_{p=1}^t G(x_p,y_p)\right\}\prod_{p=1}^tP_{\mu_{p-1}^l,p}^l(x_{p-1},dx_p)}.
$$

\subsection{Approximating the Law}

We now consider a method that will be used to provide a Monte Carlo based approximation of the law
$\mu_t^l$. The approach we present is a simple discretized method in Algorithm \ref{alg:basic_method} from \cite{basic_method}.   In Algorithm \ref{alg:basic_method}, the notation $\mathcal{N}_d(\kappa,\Sigma)$ denotes the $d-$dimensional Gaussian distribution with mean $\kappa$ and covariance matrix $\Sigma$. $I_d$ is the $d\times d$
identity matrix and $\stackrel{\textrm{ind}}{\sim}$ denotes independently distributed as.

Algorithm \ref{alg:basic_method} can be used to approximate expectations w.r.t.~$\mu_t^l$
and indeed on the grid in-between time $t-1$ and $t$.   Algorithm \ref{alg:basic_method} is given in the form that we need it later on in the article.

\begin{algorithm}[h]
\begin{enumerate}
\item{Input $l\in\mathbb{N}_0$ the level of discretization, $N\in\mathbb{N}$ the number of particles,  $t\in\{1,\dots,T\}$. If $t=1$ set $\mu_0^N(dx)=\delta_{\{x_0\}}(dx)$ otherwise input an empirical measure $\mu_{t-1}^N(dx)=\tfrac{1}{N}\sum_{i=1}^N\delta_{\{X_{t-1}^i\}}(dx)$. Set $k=1$.}
\item{For $i\in\{1,\dots,N\}$ generate:
\begin{align*}
X_{t-1+k\Delta_l}^i & =  X_{t-1+(k-1)\Delta_l}^i + a\left(X_{t-1+(k-1)\Delta_l}^i,\overline{\xi}_1(X_{t-1+(k-1)\Delta_l}^i,\mu_{t-1+(k-1)\Delta_l}^N)\right) + \\ & b\left(X_{t-1+(k-1)\Delta_l}^i,\overline{\xi}_2(X_{t-1+(k-1)\Delta_l}^i,\mu_{t-1+(k-1)\Delta_l}^N)\right)\left[W_{t-1+k\Delta_l}^i - W_{t-1+(k-1)\Delta_l}^i\right]
\end{align*}
where
\begin{eqnarray*}
\overline{\xi}_m(X_{t-1+(k-1)\Delta_l}^i,\mu_{t-1+(k-1)\Delta_l}^N) & = & \frac{1}{N}\sum_{j=1}^N \xi_m(X_{t-1+(k-1)\Delta_l}^i,X_{t-1+(k-1)\Delta_l}^j)\quad m\in\{1,2\}\\
\mu_{t-1+(k-1)\Delta_l}^N(dx) & = & \frac{1}{N}\sum_{j=1}^N\delta_{\{X_{t-1+(k-1)\Delta_l}^j\}}(dx) \\
\left[W_{t-1+k\Delta_l}^i - W_{t-1+(k-1)\Delta_l}^i\right] & \stackrel{\textrm{ind}}{\sim} & \mathcal{N}_{d}(0,\Delta_l I_d).
\end{eqnarray*}
Set $k=k+1$, if $k=\Delta_l^{-1}+1$ go to step 3.~otherwise go to the start of step 2..}
\item{Output all the required laws $\mu_{t-1+\Delta_l}^N,\dots,\mu_{t}^N$.}
\end{enumerate}
\caption{Approximating the Laws when starting with a particle approximation at time $t-1$, $t\in\{1,\dots,T\}$.}
\label{alg:basic_method}
\end{algorithm}

\subsection{Particle Filter}

An approach for the recursive approximation of $\pi_t^l(\varphi)$ is
 presented in Algorithm \ref{alg:pf} and is run to a terminal time $T$ - this need not be the case.
At this stage, several remarks are of interest:
\begin{enumerate}
\item{The cost of Algorithm \ref{alg:pf} is $\mathcal{O}(\Delta_l^{-1}M(M+N))$ per observation time (or time step).}
\item{In step 2.~it is critical that this process be independent of the particles that are generated in the particle filter.  The empirical laws produced here are generated independently of anything that is done later on.
If one used the empirical laws that were produced by the samples from the particle filter, then the approximation would be biased even if $N,M\rightarrow\infty$ and $l\rightarrow\infty$. This is because these particles used in the particle filter have been designed to approximate expectations w.r.t.~$\pi_t$ and not the law of the McKean-Vlasov SDE itself. Thus, here the motivation of a driving approximation of the SDE marginal law, as was the case in \cite{ml_is_mv,dosreis},  is one of necessity.} 
\item{The variance w.r.t.~the time parameter should be controlled and can be uniform in time.
This may not be the case for the methods in \cite{ml_is_mv}.}
\item{Improved discretization methods can be used, when available.}
\item{We will prove, later on, that as $N,M\rightarrow\infty$ we have that $\pi_{t}^{l,N,M}(\varphi)\rightarrow_{\mathbb{P}}\pi_{t}^{l}(\varphi)$, where $\rightarrow_{\mathbb{P}}$ denotes convergence in probability and $\varphi$ is in a particular class of real-valued functions to be stated.}
\end{enumerate}

\begin{algorithm}[H]
\begin{enumerate}
\item{Input $l\in\mathbb{N}_0$ the level of discretization, $N\in\mathbb{N}$ the number of particles, $M$ the number of particles to compute laws of the process,  $T\in\mathbb{N}$ the final time and 
$X_0^{i}=x_0$, $i\in\{1,\dots,N\}$.  Set $t=1$.}
\item{Run Algorithm \ref{alg:basic_method} at time $t$ with $M$ particles and discretization level $l$ and, if needed and possible, using the input empirical measure that has been computed at the last call of Algorithm \ref{alg:basic_method}.}
\item{For $(i,k)\in\{1,\dots,N\}\times\{1,\dots,\Delta_l^{-1}\}$ generate:
\begin{align*}
X_{t-1+k\Delta_l}^i & =  X_{t-1+(k-1)\Delta_l}^i + a\left(X_{t-1+(k-1)\Delta_l}^i,\overline{\xi}_1(X_{t-1+(k-1)\Delta_l}^i,\mu_{t-1+(k-1)\Delta_l}^M)\right) + \\ &b\left(X_{t-1+(k-1)\Delta_l}^i,\overline{\xi}_2(X_{t-1+(k-1)\Delta_l}^i,\mu_{t-1+(k-1)\Delta_l}^M)\right)\left[W_{t-1+k\Delta_l}^i - W_{t-1+(k-1)\Delta_l}^i\right]
\end{align*}
where the laws $\mu_{t-1+\Delta_l}^M,\dots,\mu_{t-1-\Delta_l}^M$ have been computed in step 2..}
\item{For $i\in\{1,\dots,N\}$ compute the weights
$$
V^i_t = \frac{G(X_{t}^i,y_t)}{\sum_{j=1}^N G(X_{t}^i,y_t)}
$$
and $\pi_{t}^{l,N,M}(\varphi):= \sum_{i=1}^NV^i_t\varphi(X_{t}^i)$.  For $i\in\{1,\dots,N\}$ sample an index
$j^i\in\{1,\dots,N\}$ using the probability mass function $V^1_t,\dots,V^N_t$ and set 
$\check{X}_t^i=X_t^{j^i}$.
For $i\in\{1,\dots,N\}$, set $X_t^i=\check{X}_t^i$.
Set $t=t+1$ and if $t=T+1$ go to step 4.~otherwise go to step 2..}
\item{Return the estimators $\pi_{1}^{l,N,M}(\varphi),\dots,\pi_{T}^{l,N,M}(\varphi)$.}
\end{enumerate}
\caption{Particle Filter for approximating $\pi_{t}^l(\varphi)$.}
\label{alg:pf}
\end{algorithm}

\subsection{Multilevel Particle Filter}

We now show how the PF can be extended into the MLMC framework,  along the lines of the work in \cite{mlpf}.
The purpose of this is to reduce the computing cost in terms of an MSE, which is to be defined in Section \ref{sec:theory}.  The basic idea is to approximate the identity for $(L,t,\varphi)\in\mathbb{N}^2\times\mathcal{B}_b(\mathbb{R}^d)$:
$$
\pi_t^{L}(\varphi) = \pi_t^0(\varphi) + \sum_{l=1}^L \left\{\pi_t^l(\varphi)-\pi_t^{l-1}(\varphi)\right\}.
$$
The terms on the right hand side (R.H.S.) of the above equation are then approximated as follows. The term
$\pi_t^0(\varphi)$ can be dealt with using the PF of Algorithm \ref{alg:pf}. Then independently of using the PF and for each summand, we run what is called a coupled PF (CPF). This is a type of PF which allows one to approximate 
$\pi_t^l(\varphi)-\pi_t^{l-1}(\varphi)$ by producing a coupling of the sampling step (Algorithm \ref{alg:pf} step 3.)
and a coupling of the resampling step (Algorithm \ref{alg:pf} step 4.).  
In Algorithm \ref{alg:pf} we also use Algorithm \ref{alg:basic_method} which needs to be coupled as well.
This coupling and also for the sampling step of a PF is achieved using the well-known synchronous coupling for diffusions (and the associated Euler approximation) and is given in our coupling of Algorithm \ref{alg:basic_method}, which can be found in Algorithm \ref{alg:basic_method_coup}.
The coupling of the resampling step relies on simulating a maximal coupling and is given in
Algorithm \ref{alg:max_couple}.  As noted in \cite{coup_clt}, there is nothing optimal about using this approach in a CPF,  but it is rather popular in the literature and is of linear complexity in terms of the cardinality of the state-space.

Given all the ideas already stated,  we are now in a position to give the CPF algorithm which is Algorithm \ref{alg:cpf}.  The cost of the CPF is $\mathcal{O}(\Delta_l^{-1}M(M+N))$ per observation time.
We can also describe the MLPF method that we will use for the approximation of $\pi_t^{L}(\varphi)$.
We assume that we have $L\in\mathbb{N}$ given and for each level $l$, the number of samples $N_l$ used at each level of the application of Algorithm \ref{alg:cpf}, with the number of samples used for the call of 
Algorithm \ref{alg:basic_method_coup} within Algorithm \ref{alg:cpf} equal to $M_l$ also; we shall describe how to choose these numbers later on.  Then the procedure would be as follows:
\begin{enumerate}
\item{Run Algorithm \ref{alg:pf} with $N_0$ samples and $M_0$ samples for each call of Algorithm \ref{alg:basic_method}.}
\item{Independently for each $l\in\{1,\dots,L\}$, run Algorithm \ref{alg:cpf} with $N_l$ samples and $M_l$ samples for each call of Algorithm \ref{alg:basic_method_coup}.}
\end{enumerate}
Then our MLPF estimator of $\pi_t^L(\varphi)$ is for $(L,t,\varphi)\in\mathbb{N}^2\times\mathcal{B}_b(\mathbb{R}^d)$:
$$
\widehat{\pi_t^L(\varphi)} = \pi_t^{0,N_0,M_0}(\varphi) + \sum_{l=1}^L \left\{\pi_{t}^{l,N_l,M_l}(\varphi) - \pi_{t}^{l-1,N_l,M_l}(\varphi)\right\}
$$
where $\pi_t^{0,N_0,M_0}(\varphi)$ has been computed using Algorithm \ref{alg:pf} and each of the
$\pi_{t}^{l,N_l,M_l}(\varphi) - \pi_{t}^{l-1,N_l,M_l}(\varphi)$ are produced by using Algorithm \ref{alg:cpf}.
The cost of computing this estimator, per time step, is 
$$
\mathcal{O}\left(\sum_{l=0}^L\Delta_{l}^{-1}M_l(M_l+N_l)\right).
$$
In the next section we shall establish how to choose $L$, $(N_0,M_0),\dots,(N_L,M_L)$ so as to obtain a particular MSE and minimize the cost.

\begin{algorithm}
\begin{enumerate}
\item{Input $l\in\mathbb{N}_0$ the level of discretization, $N\in\mathbb{N}$ the number of particles,  $t\in\{1,\dots,T\}$. If $t=1$ set $\mu_0^{l,N}(dx)=\widetilde{\mu}_0^{l-1,N}(dx)=\delta_{\{x_0\}}(dx)$ otherwise input a pair of empirical measures $\mu_{t-1}^{l,N}(dx)=\tfrac{1}{N}\sum_{i=1}^N\delta_{\{X_{t-1}^{l,i}\}}(dx)$, 
$\widetilde{\mu}_{t-1}^{l-1,N}(dx)=\tfrac{1}{N}\sum_{i=1}^N\delta_{\{\widetilde{X}_{t-1}^{l-1,i}\}}(dx)$.  Set $k=1$.}
\item{For $i\in\{1,\dots,N\}$ generate:
\begin{align*}
X_{t-1+k\Delta_l}^{l,i} & =  X_{t-1+(k-1)\Delta_l}^{l,i} + a\left(X_{t-1+(k-1)\Delta_l}^{l,i},\overline{\xi}_1(X_{t-1+(k-1)\Delta_l}^{l,i},\mu_{t-1+(k-1)\Delta_l}^{l,N})\right) + \\ & b\left(X_{t-1+(k-1)\Delta_l}^{l,i},\overline{\xi}_2(X_{t-1+(k-1)\Delta_l}^{l,i},\mu_{t-1+(k-1)\Delta_l}^{l,N})\right)\left[W_{t-1+k\Delta_l}^i - W_{t-1+(k-1)\Delta_l}^i\right]
\end{align*}
where
\begin{eqnarray*}
\overline{\xi}_m(X_{t-1+(k-1)\Delta_l}^i,\mu_{t-1+(k-1)\Delta_l}^{l,N}) & = & \frac{1}{N}\sum_{j=1}^N \xi_m(X_{t-1+(k-1)\Delta_l}^{l,i},X_{t-1+(k-1)\Delta_l}^{l,j})\quad m\in\{1,2\}\\
\mu_{t-1+(k-1)\Delta_l}^{l,N}(dx) & = & \frac{1}{N}\sum_{j=1}^N\delta_{\{X_{t-1+(k-1)\Delta_l}^{l,j}\}}(dx).
\end{eqnarray*}
Set $k=k+1$, if $k=\Delta_l^{-1}+1$ go to step 3.~otherwise go to the start of step 2..}
\item{For $i\in\{1,\dots,N\}$ compute:
\begin{align*}
\widetilde{X}_{t-1+k\Delta_{l-1}}^{l-1,i} & =  \widetilde{X}_{t-1+(k-1)\Delta_{l-1}}^{l-1,i} + a\left(\widetilde{X}_{t-1+(k-1)\Delta_{l-1}}^{l-1,i},\overline{\xi}_1(\widetilde{X}_{t-1+(k-1)\Delta_{l-1}}^{l-1,i},\widetilde{\mu}_{t-1+(k-1)\Delta_{l-1}}^{l-1,N})\right) + \\ & b\left(\widetilde{X}_{t-1+(k-1)\Delta_{l-1}}^{l-1,i},\overline{\xi}_2(\widetilde{X}_{t-1+(k-1)\Delta_{l-1}}^{l-1,i},\widetilde{\mu}_{t-1+(k-1)\Delta_{l-1}}^{l-1,N})\right)\left[W_{t-1+k\Delta_{l-1}}^i - W_{t-1+(k-1)\Delta_{l-1}}^i\right]
\end{align*}
where
\begin{eqnarray*}
\overline{\xi}_m(\widetilde{X}_{t-1+(k-1)\Delta_{l-1}}^{l-1,i},\widetilde{\mu}_{t-1+(k-1)\Delta_{l-1}}^{l-1,N}) & = & \frac{1}{N}\sum_{j=1}^N \xi_m(\widetilde{X}_{t-1+(k-1)\Delta_{l-1}}^{l-1,i},\widetilde{X}_{t-1+(k-1)\Delta_{l-1}}^{l-1,j})\quad m\in\{1,2\}\\
\widetilde{\mu}_{t-1+(k-1)\Delta_{l-1}}^{l-1,N}(dx) & = & \frac{1}{N}\sum_{j=1}^N\delta_{\{\widetilde{X}_{t-1+(k-1)\Delta_l}^{l-1,j}\}}(dx)
\end{eqnarray*}
and the increments of the Brownian motion $\left[W_{t-1+k\Delta_{l-1}}^i - W_{t-1+(k-1)\Delta_{l-1}}^i\right]$ were generated in step 2..
Set $k=k+1$, if $k=\Delta_{l-1}^{-1}+1$ go to step 4.~otherwise go to the start of step 3..}
\item{Output all the required laws $\mu_{t-1+\Delta_l}^{l,N},\dots,\mu_{t}^{l,N}$,  $\widetilde{\mu}_{t-1+\Delta_l}^{l-1,N},\dots,\widetilde{\mu}_{t}^{l-1,N}$.}
\end{enumerate}
\caption{Approximating the Consecutive Laws when starting with a particle approximation at time $t-1$, $t\in\{1,\dots,T\}$.}
\label{alg:basic_method_coup}
\end{algorithm}

\begin{algorithm}
\begin{enumerate}
\item{Input $M\in\mathbb{N}$ the cardinality of the state-space and two positive probability mass functions
$V_1^1,\dots,V_1^M$ and $V_2^1,\dots,V_2^M$ on $\{1,\dots,M\}$.  Go to 2..}
\item{Sample $U\sim\mathcal{U}_{[0,1]}$ (continuous uniform distribution on $[0,1]$). If $U<\sum_{i=1}^M\min\{V_1^i,V_2^i\}$ go to 3.~otherwise go to 4..}
\item{Sample an index $i_1$ using the probability mass function
$$
\mathbb{P}(i_1) = \frac{\min\{V_1^{i_1},V_2^{i_1}\}}{\sum_{j_1=1}^M\min\{V_1^{j_1},V_2^{j_1}\}}
$$
set $i_2=i_1$ and go to 5..}
\item{Sample the indices $(i_1,i_2)$ using the probability mass function
$$
\mathbb{P}(i_1,i_2) = \frac{V_1^{i_1}-\min\{V_1^{i_1},V_2^{i_1}\}}{1-\sum_{j_1=1}^M\min\{V_1^{j_1},V_2^{j_1}\}}
\frac{V_2^{i_2}-\min\{V_1^{i_2},V_2^{i_2}\}}{1-\sum_{j_1=1}^M\min\{V_1^{j_1},V_2^{j_1}\}}
$$
and go to 5..}
\item{Return the indices $(i_1,i_2)\in\{1,\dots,M\}^2$.}
\end{enumerate}
\caption{Simulating a Maximal Coupling.}
\label{alg:max_couple}
\end{algorithm}

\begin{algorithm}
\begin{enumerate}
\item{Input $l\in\mathbb{N}_0$ the level of discretization, $N\in\mathbb{N}$ the number of particles, $M$ the number of particles to compute laws of the process,  $T\in\mathbb{N}$ the final time and $X_0^{l,i}=\widetilde{X}_{0}^{l-1,i}=x_0$, $i\in\{1,\dots,N\}$. Set $t=1$.}
\item{Run Algorithm \ref{alg:basic_method_coup} at time $t$ with $M$ particles and discretization level $l$ and, if needed and possible, using the input empirical measure that has been computed at the last call of Algorithm \ref{alg:basic_method_coup}.}
\item{For $(i,k)\in\{1,\dots,N\}\times\{1,\dots,\Delta_l^{-1}\}$ generate:
\begin{align}
X_{t-1+k\Delta_l}^{l,i} & =  X_{t-1+(k-1)\Delta_l}^{l,i} + a\left(X_{t-1+(k-1)\Delta_l}^{l,i},\overline{\xi}_1(X_{t-1+(k-1)\Delta_l}^{l,i},\mu_{t-1+(k-1)\Delta_l}^{l,M})\right) + \nonumber\\  &b\left(X_{t-1+(k-1)\Delta_l}^{l,i},\overline{\xi}_2(X_{t-1+(k-1)\Delta_l}^{l,i},\mu_{t-1+(k-1)\Delta_l}^{l,M})\right)\left[W_{t-1+k\Delta_l}^i - W_{t-1+(k-1)\Delta_l}^i\right]\label{eq:fine}
\end{align}
where the laws $\mu_{t-1+\Delta_l}^{l,M},\dots,\mu_{t-\Delta_l}^{l,M}$
 have been computed in step 2..  In addition for 
$(i,k)\in\{1,\dots,N\}\times\{1,\dots,\Delta_{l-1}^{-1}\}$ compute:
\begin{align*}
\widetilde{X}_{t-1+k\Delta_{l-1}}^{l-1,i} & =  \widetilde{X}_{t-1+(k-1)\Delta_{l-1}}^{l-1,i} + a\left(\widetilde{X}_{t-1+(k-1)\Delta_{l-1}}^{l-1,i},\overline{\xi}_1(\widetilde{X}_{t-1+(k-1)\Delta_{l-1}}^{l-1,i},\widetilde{\mu}_{t-1+(k-1)\Delta_{l-1}}^{l-1,M})\right) + \\ &b\left(\widetilde{X}_{t-1+(k-1)\Delta_{l-1}}^{l-1,i},\overline{\xi}_2(\widetilde{X}_{t-1+(k-1)\Delta_{l-1}}^{l-1,i},\widetilde{\mu}_{t-1+(k-1)\Delta_{l-1}}^{l-1,M})\right)\left[W_{t-1+k\Delta_{l-1}}^i - W_{t-1+(k-1)\Delta_{l-1}}^i\right]
\end{align*}
where the laws $\widetilde{\mu}_{t-1+\Delta_{l-1}}^{l-1,M},\dots,\widetilde{\mu}_{t-\Delta_{l-1}}^{l-1,M}$
 have been computed in step 2. and the increments of the Brownian motion $\left[W_{t-1+k\Delta_{l-1}}^i - W_{t-1+(k-1)\Delta_{l-1}}^i\right]$ were already generated in \eqref{eq:fine}.}
\item{For $i\in\{1,\dots,N\}$ compute the weights
$$
V^{l,i}_t = \frac{G(X_{t}^{l,i},y_t)}{\sum_{j=1}^N G(X_{t}^{l,i},y_t)} \quad
\widetilde{V}^{l-1,i}_t = \frac{G(\widetilde{X}_{t}^{l-1,i},y_t)}{\sum_{j=1}^N G(\widetilde{X}_{t}^{l-1,i},y_t)}
$$
and 
$$
\pi_{t}^{l,N,M}(\varphi) - \pi_{t}^{l-1,N,M}(\varphi) := \sum_{i=1}^N\left\{V^{l,i}_t\varphi(X_{t}^{l,i})-
\widetilde{V}^{l-1,i}_t\varphi(\widetilde{X}_{t}^{l-1,i})\right\}.
$$ 
For $i\in\{1,\dots,N\}$ sample indices
$(j_1^i,j_2^i)\in\{1,\dots,N\}^2$ using Algorithm \ref{alg:max_couple} with the probability mass functions $V^{l,1}_t,\dots,V^{l,N}_t$, $\widetilde{V}^{l-1,1}_t,\dots,\widetilde{V}^{l-1,N}_t$ 
and set 
$\check{X}_t^{l,i}=X_t^{l,j_1^i}$,  $\hat{X}_t^{l-1,i}=\widetilde{X}_t^{l-1,j_2^i}$.
For $i\in\{1,\dots,N\}$, set $X_t^{l,i}=\check{X}_t^{l,i}$,  $\widetilde{X}_t^{l-1,i}=\hat{X}_t^{l-1,i}$.
Set $t=t+1$ and if $t=T+1$ go to step 4.~otherwise go to step 2..}
\item{Return the estimators $\pi_{1}^{l,N,M}(\varphi) - \pi_{1}^{l-1,N,M}(\varphi),\dots,\pi_{T}^{l,N,M}(\varphi) - \pi_{T}^{l-1,N,M}(\varphi)$.}
\end{enumerate}
\caption{Coupled Particle Filter for approximating $\pi_{t}^l(\varphi)-\pi_{t}^{l-1}(\varphi)$.}
\label{alg:cpf}
\end{algorithm}

\section{Theoretical Analysis}\label{sec:theory}
\subsection{Notation}
Define the sets $\mathbb{N}=\{1,\dots\}$ and $\mathbb{N}_0=\{0,1,\dots\}$. Denote by $\mathcal{B}_b(\mathbb{R}^{d_1},\mathbb{R}^{d_2})$ the set of all $\mathbb{R}^{d_2}$ valued bounded Borel measurable functions defined on $\mathbb{R}^{d_1}$, for $k\in\mathbb{N}$ denote by $\mathcal{C}^k_b(\mathbb{R}^{d_1},\mathbb{R}^{d_2})$ the set of $k$-times continuously differentiable $\mathbb{R}^{d_2}$-valued functions whose domain is $\mathbb{R}^{d_1}$ and whose derivatives of order at most $k$ with respect to any components are bounded. Define $\mathcal{C}^{\infty}_b(\mathbb{R}^{d_1},\mathbb{R}^{d_2})=\cap_{k=1}^{\infty}\mathcal{C}^k_b(\mathbb{R}^{d_1},\mathbb{R}^{d_2})$. We use the shortcuts $\mathcal{C}_b^k(\mathbb{R}^d)=\mathcal{C}_b^k(\mathbb{R}^d,\mathbb{R})$ and $\mathcal{B}_b(\mathbb{R}^d)=\mathcal{B}_b(\mathbb{R}^d,\mathbb{R})$.
For $\varphi\in\mathcal{B}_b(\mathbb{R}^d)$ define the norm $|\varphi|_0=\underset{x\in\mathbb{R}^d}{\textup{ess}\sup}|\varphi(x)|$ and for $\varphi\in\mathcal{C}_b^k(\mathbb{R}^{d})$ define the norms $$|\varphi|_k=\sum_{i=0}^k\sup_{(j_1,\dots,j_i)\in\{1,\dots,d_1\}^i}\sup_{x\in\mathbb{R}^d}\left|\frac{\partial^i}{\partial x_{j_1}\dots\partial x_{j_i}}\varphi(x)\right|,$$
where $x=(x_1,\dots,x_{d_1})$. For a vector $v\in\mathbb{R}^d$ denote by $\|v\|$ the usual Euclidean norm.

\subsection{Assumptions}
\begin{hypA}\label{assump:A1}
The functions $a\in\mathcal{C}^2_b(\mathbb{R}^d\times\mathbb{R},\mathbb{R}^d)\cap \mathcal{B}_b(\mathbb{R}^d\times\mathbb{R},\mathbb{R}^d)$, $b\in\mathcal{C}^2_b(\mathbb{R}^d\times\mathbb{R},\mathbb{R}^{d\times d})\cap \mathcal{B}_b(\mathbb{R}^d\times\mathbb{R},\mathbb{R}^{d\times d})$, and $\xi_1,\xi_2\in\mathcal{C}^2_b(\mathbb{R}^d\times\mathbb{R}^{d})\cap \mathcal{B}_b(\mathbb{R}^d\times\mathbb{R}^{d})$, and $$\inf_{x\in\mathbb{R}^{d+1}}\inf_{v\in\mathbb{R}^{d}\backslash\{0\}} v^{\top}b(x)^{\top}b(x)v/\|v\|^2 > 0.$$
\end{hypA}
\begin{hypA}\label{assump:A2}
The function $G\in\mathcal{C}_b^2(\mathbb{R}^d)\cap\mathcal{B}_b(\mathbb{R}^d)$ and satisfies $\inf_{(x,y)\in\mathbb{R}^{d\times d_y}}G(x,y)>0$.
\end{hypA}
\begin{hypA}\label{assump:A3}
	The sequence of positive integers $N_0,\dots,N_L$ used to define the estimator $\widehat{\pi^L}$ is strictly decreasing and satisfies $N_{l} - N_{l+1}\geq L-l$ for $0\leq l\leq L-1$.
\end{hypA}
\begin{hypW}\label{assump:W1}
For every function $\varphi\in\mathcal{C}_b^{\infty}(\mathbb{R}^d)\cap\mathcal{B}_b(\mathbb{R}^d)$ the function $x\mapsto P_{\mu_{t-1},t}(\varphi)=\int \varphi(z)P_{\mu_{t-1},t}(x,dz)$ belongs to $\mathcal{C}_b^{\infty}(\mathbb{R}^d)$.
\end{hypW}
\begin{hypW}\label{assump:W2}
The functions $a\in\mathcal{C}^{\infty}_b(\mathbb{R}^d\times\mathbb{R},\mathbb{R}^d)$, $b\in\mathcal{C}^{\infty}_b(\mathbb{R}^d\times\mathbb{R},\mathbb{R}^{d\times d})$, and $\xi_1,\xi_2\in\mathcal{C}^{\infty}_b(\mathbb{R}^d\times\mathbb{R}^{d})$. Futhermore there exist functions $\zeta_1,\zeta_2\in\mathcal{C}^{\infty}_b(\mathbb{R}^d)$ such that $\xi_i(x,y)=\zeta_i(x-y)$ for all $x,y\in\mathbb{R}^d$ and $i\in\{1,2\}$.
\end{hypW}

\subsection{Main Theorems}\label{sec:results}
\begin{theorem}\label{thm:single_level_error}
Assume \hyperref[assump:A1]{(A1-2)} and \hyperlink{assump:W1}{(W1-2)}. Then for any $\varphi\in\mathcal{C}^{\infty}_b(\mathbb{R}^d)\cap\mathcal{B}_b(\mathbb{R}^d)$ and $t\in\{1,\dots,T\}$ there exists a constant $C<+\infty$ such that for every $(L,M,N)\in\mathbb{N}_0\times \mathbb{N}\times \mathbb{N}$:
	\begin{equation*}
		\mathbb{E}\left[ (\pi_t(\varphi) - \pi_t^{L,M,N}(\varphi))^2 \right] \leq C\left(\Delta_L^2+\frac{1}{N} + \frac{1}{M}\right).
	\end{equation*}
\end{theorem}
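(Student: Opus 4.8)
The plan is to split the error into a deterministic time-discretization bias and two Monte Carlo contributions --- one from the $N$ particles of the filter and one from the $M$ particles used to approximate the McKean--Vlasov law --- and to bound each separately. First I would write
\begin{align*}
\pi_t(\varphi) - \pi_t^{L,M,N}(\varphi) &= \big(\pi_t(\varphi) - \pi_t^{L}(\varphi)\big) + \big(\pi_t^{L}(\varphi) - \pi_t^{L,M}(\varphi)\big)\\
&\quad + \big(\pi_t^{L,M}(\varphi) - \pi_t^{L,M,N}(\varphi)\big),
\end{align*}
where $\pi_t^{L,M}$ denotes the (random) filter obtained from $\pi_t^{L}$ by replacing every transition kernel $P^{L}_{\mu^{L}_{p-1},p}$ with $P^{L}_{\mu^{L,M}_{p-1},p}$, and $\mu^{L,M}_{\cdot}$ is the $M$-particle empirical law produced by Algorithm~\ref{alg:basic_method} run over $[0,T]$. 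This auxiliary law-system is a Markov chain in the time index and, crucially, is \emph{independent} of the filter particles (exactly the independence stressed in the remarks after Algorithm~\ref{alg:pf}). By $(x+y+z)^2\le 3(x^2+y^2+z^2)$ it then suffices to bound each of the three terms in mean square by a constant multiple of $\Delta_L^2$, $1/N$ and $1/M$ respectively.

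For the discretization term, $\pi_t(\varphi) - \pi_t^{L}(\varphi)$ is deterministic: it is the weak error of the Euler scheme \eqref{eq:euler} propagated through $t$ steps of the Feynman--Kac recursion defining $\pi_t$. I would use that, under \hyperref[assump:W1]{(W1--2)}, the exact and discretized transition semigroups preserve $\mathcal{C}_b^{\infty}(\mathbb{R}^d)$ and satisfy the first-order weak estimate $\sup_x|P_{\mu_{p-1},p}(\psi)(x) - P^{L}_{\mu^{L}_{p-1},p}(\psi)(x)|\le C\Delta_L\,|\psi|_{k}$ for smooth $\psi$ and some fixed $k$ (the standard weak rate for smooth coefficients, cf.\ \cite{ml_is_mv}). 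Since by \hyperref[assump:A2]{(A2)} the potential $G$ is bounded above and below, the normalized Feynman--Kac map is Lipschitz in its kernels on smooth test functions, so an induction on $p=1,\dots,t$ gives $|\pi_t(\varphi)-\pi_t^{L}(\varphi)|\le C\Delta_L$, hence $\le C\Delta_L^2$ after squaring.

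For the filter term I would condition on the $\sigma$-algebra $\mathcal{G}$ generated by the law-approximation system $\mu^{L,M}_{\cdot}$. Given $\mathcal{G}$, Algorithm~\ref{alg:pf} is precisely a classical bootstrap particle filter with fixed, $\mathcal{G}$-measurable Markov kernels $P^{L}_{\mu^{L,M}_{p-1},p}$ and potentials $G(\cdot,y_p)$, so that $\pi_t^{L,M,N}(\varphi)$ is the corresponding $N$-particle estimate of $\pi_t^{L,M}(\varphi)$. The standard $\mathbb{L}_2$ error bound for particle filters (e.g.\ \cite{delmoral}) then yields $\mathbb{E}\big[(\pi_t^{L,M}(\varphi)-\pi_t^{L,M,N}(\varphi))^2\mid\mathcal{G}\big]\le C/N$ with $C$ depending only on $t$, $|\varphi|_0$, $|G|_0$ and $\inf G$ --- in particular uniformly in $\mathcal{G}$ and in $L$ --- whence the $C/N$ bound upon taking expectations.

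The law term is the crux, and the step I expect to be the main obstacle. Here $\pi_t^{L}$ and $\pi_t^{L,M}$ are the filters associated with the same potentials but with kernels differing only through the law entering $a$ and $b$ via $\overline{\xi}_j(x,\mu)=\int\xi_j(x,z)\mu(dz)$. I would couple the two Euler recursions synchronously (common Brownian increments) and, using the boundedness and Lipschitz bounds on $a,b,\xi_j$ from \hyperref[assump:A1]{(A1)} together with a discrete Gronwall argument, control $\sup_x|P^{L}_{\mu^{L}_{p-1},p}(\psi)(x) - P^{L}_{\mu^{L,M}_{p-1},p}(\psi)(x)|$, for smooth $\psi$, by $C|\psi|_1\sum_{s}\sup_x\big|\int\xi_j(x,z)(\mu^{L,M}_s-\mu^{L}_s)(dz)\big|$, the sum running over grid points $s$ in $[0,p]$; Lipschitz stability of the Feynman--Kac map (as in the discretization step) then reduces matters to the $\mathbb{L}_2$ error of the law approximation. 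For the latter a classical propagation-of-chaos estimate for the $M$-particle Euler--McKean--Vlasov system of Algorithm~\ref{alg:basic_method} --- a Gronwall argument on $\mathbb{E}\|X^{i}_s-\bar X^{i}_s\|^2$ against independent copies $\bar X^i$, again using the uniform coefficient bounds --- gives $\sup_x\mathbb{E}\big[\big(\int\xi_j(x,z)(\mu^{L,M}_s-\mu^{L}_s)(dz)\big)^2\big]\le C/M$ for $s\le T$, uniformly in $L$, and the $C/M$ bound follows. The delicate points are: pushing an $\mathbb{L}_2$ propagation-of-chaos estimate through a \emph{random} change of transition kernel into an $\mathbb{L}_2$ bound on the normalized filter; ensuring that all constants (weak rate, propagation of chaos, Feynman--Kac stability) stay uniform in $L$, which holds because they depend on $T$ and the uniform coefficient bounds rather than on the number $\Delta_L^{-1}$ of grid points; and keeping the test functions throughout the recursion in a class ($\mathcal{C}_b^{1}$, or $\mathcal{C}_b^{\infty}$) for which the synchronous-coupling estimates apply.
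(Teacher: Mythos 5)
Your decomposition into bias, law-approximation and particle-filter terms, and your treatment of each (weak-error estimate pushed through the Feynman--Kac recursion by induction; synchronous coupling plus propagation of chaos for the $1/M$ term; the conditional standard $\mathbb{L}_2$ particle-filter bound with a constant made uniform by the two-sided bounds on $G$ for the $1/N$ term) is exactly the paper's proof, which invokes Lemmata \ref{lm:pi_eta_estimates_1_part_2}, \ref{lm:pi_ml} and \ref{lm:pf1} for the three terms respectively. The only organizational difference is in the $1/M$ term, where the paper sidesteps the ``random kernel'' delicacy you flag by coupling whole auxiliary path processes $\tilde{Z}^{L,M}$ and $\tilde{Z}^{L}$ driven by a common Brownian motion independent of the law particles and representing $\gamma_t^{L,M}(\varphi)$ as a conditional expectation of a Lipschitz functional of that path, so that Jensen's inequality converts the pathwise coupling estimate directly into the required $\mathbb{L}_2$ bound.
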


\begin{theorem}\label{thm:ml_error}
	Assume \hyperref[assump:A1]{(A1-3)} and \hyperlink{assump:W1}{(W1-2)}. Then for any $\varphi\in\mathcal{C}^{\infty}_b(\mathbb{R}^d)\cap\mathcal{B}_b(\mathbb{R}^d)$ and $t\in\{1,\dots,T\}$ there exists $C<+\infty$ such that for every $L\in\mathbb{N}_0$, and sequences of positive integers $M_0,\dots,M_L$ and $N_0,\dots,N_L$:
	\begin{equation*}
		\mathbb{E}\left[ (\pi_t(\varphi)-\widehat{\pi^L_t(\varphi)})^2 \right]\leq C\left(\Delta_L^2 + \sum_{l=0}^L\frac{\Delta_l^{1/2}}{N_l} + L\sum_{l=0}^L\frac{\Delta_l}{M_l}\right).
	\end{equation*}
\end{theorem}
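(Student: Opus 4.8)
The plan is to bound the MSE by a bias-plus-variance decomposition, reusing the bias estimate from Theorem \ref{thm:single_level_error} and then controlling the variance of the multilevel correction via a telescoping/martingale argument that exploits the synchronous coupling built into Algorithms \ref{alg:basic_method_coup} and \ref{alg:cpf}. Write $\widehat{\pi^L_t(\varphi)} - \pi_t(\varphi) = \bigl(\pi_t^L(\varphi) - \pi_t(\varphi)\bigr) + \bigl(\widehat{\pi^L_t(\varphi)} - \pi_t^L(\varphi)\bigr)$. The first term is a pure time-discretization bias and, by the weak-rate argument underlying Theorem \ref{thm:single_level_error} (Euler-Maruyama with the smoothness in (W1-2)), is $\mathcal{O}(\Delta_L)$, contributing $\mathcal{O}(\Delta_L^2)$ after squaring. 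For the second term, since the PF at level $0$ and the CPFs at levels $1,\dots,L$ are run independently, one has
$$
\mathbb{E}\Bigl[\bigl(\widehat{\pi^L_t(\varphi)}-\pi_t^L(\varphi)\bigr)^2\Bigr] \leq 2\,\mathbb{E}\Bigl[\bigl(\pi_t^{0,N_0,M_0}(\varphi)-\pi_t^0(\varphi)\bigr)^2\Bigr] + 2\,\mathbb{E}\Bigl[\Bigl(\sum_{l=1}^L \bigl(\pi_t^{l,N_l,M_l}(\varphi)-\pi_t^{l-1,N_l,M_l}(\varphi)\bigr) - \bigl(\pi_t^l(\varphi)-\pi_t^{l-1}(\varphi)\bigr)\Bigr)^2\Bigr].
$$
The first summand is $\mathcal{O}(1/N_0 + 1/M_0)$ by Theorem \ref{thm:single_level_error} with $L=0$. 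For the second, the crucial point is a \emph{decoupled} mean-square bound of the form
$$
\mathbb{E}\Bigl[\bigl(\{\pi_t^{l,N_l,M_l}-\pi_t^{l-1,N_l,M_l}\}(\varphi) - \{\pi_t^l-\pi_t^{l-1}\}(\varphi)\bigr)^2\Bigr] \leq C\Bigl(\frac{\Delta_l^{1/2}}{N_l} + \frac{\Delta_l}{M_l}\Bigr),
$$
which is the single-level analogue of the multilevel rate and must be established for the CPF of Algorithm \ref{alg:cpf}.

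To prove this per-level bound I would proceed by the now-standard particle-filter route: write the unnormalized Feynman-Kac flow, introduce the resampled/updated empirical measures at each observation time $p=1,\dots,t$, and telescope the error at time $t$ into a sum of one-step "local" errors, each of which is a martingale-increment-type difference conditioned on the filtration generated by the particle system up to time $p-1$. Each local error splits further into (i) the sampling error of the $N_l$ state particles given the driving law approximation $\mu^{l,M_l}$, and (ii) the error coming from replacing the exact discretized law $\mu^l$ by its $M_l$-particle version $\mu^{l,M_l}$ produced by Algorithm \ref{alg:basic_method_coup}. For (i), the synchronous coupling of the Brownian increments in Algorithm \ref{alg:cpf} gives, via a standard Gronwall/Lipschitz estimate on the Euler maps and the smoothness in (W1-2), that the coupled pair of $N_l$-particle systems stays $\mathcal{O}_{L_2}(\Delta_{l-1}^{1/2})$-close per step (the $1/2$ exponent being the strong Euler rate for non-constant $b$), so the variance of the difference $V^{l,i}\varphi(X_t^{l,i}) - \widetilde{V}^{l-1,i}\varphi(\widetilde{X}_t^{l-1,i})$ is $\mathcal{O}(\Delta_l^{1/2}/N_l)$ after accounting for the maximal-coupling resampling (which, as in \cite{mlpf}, preserves closeness up to the same order). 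For (ii), a separate but structurally identical analysis of Algorithm \ref{alg:basic_method_coup} — itself an interacting particle system approximating the McKean-Vlasov law — gives a propagation-of-chaos-type bound $\mathbb{E}[(\overline{\xi}_m(x,\mu^{l,M_l}_s) - \overline{\xi}_m(x,\mu^l_s))^2] = \mathcal{O}(1/M_l)$ uniformly on the unit time interval, and this $1/M_l$ error feeds through the Euler recursion to give the $\Delta_l/M_l$ contribution (the extra $\Delta_l$ appearing because the law discrepancy enters only the coupled \emph{difference}, which is itself $O(\Delta_l)$ in the relevant sense). Summing the local errors over $p=1,\dots,t$ and using boundedness of $G$ and $\varphi$ together with Assumption (A2) (the lower bound on $G$ controls the normalizing constants) keeps the constant $t$-dependent but finite.

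Finally I would sum the per-level bounds over $l=1,\dots,L$. The $\sum_l \Delta_l^{1/2}/N_l$ term appears directly. The factor $L$ in front of $\sum_l \Delta_l/M_l$ arises because when squaring the sum $\sum_{l=1}^L(\cdot)$ one uses Cauchy-Schwarz (or the inequality $(\sum a_l)^2 \le L\sum a_l^2$) rather than independence for the $M$-driven part — the CPFs at different levels are independent, so cross terms of the \emph{total} error vanish in expectation, but within each level the bias-type contribution of the law approximation does not telescope to a martingale and so must be absorbed with the crude $L\sum$ bound; alternatively the $L$ can be traced to a union-type accumulation over levels in controlling $\mathbb{E}[\max_l(\cdot)]$. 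Here $\mathbb{E}[\pi_t^{0,N_0,M_0}(\varphi)-\pi_t^0(\varphi)]$ etc.\ being the level-$0$ piece is subsumed into the $l=0$ terms of the two sums. The main obstacle I anticipate is part (ii): carefully quantifying how the error of the independently-run law approximation from Algorithm \ref{alg:basic_method_coup} propagates into the coupled difference of the two particle filters without destroying the $\Delta_l$ gain — in particular showing the law-approximation error enters the \emph{difference} $\pi_t^{l,\cdot}-\pi_t^{l-1,\cdot}$ at order $O(\Delta_l/M_l)$ rather than $O(1/M_l)$, which requires a coupled analysis of the two runs of Algorithm \ref{alg:basic_method_coup} using common Brownian increments and the strong Euler rate, mirroring the treatment of the state particles but one level down. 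Establishing uniform-in-time control (needed if one wants the constant independent of $t$, as hinted in the remarks) would be an additional technical layer, relying on the stability of the Feynman-Kac semigroup under (A2); for the stated finite-horizon $t\in\{1,\dots,T\}$ result a $t$-dependent constant suffices.
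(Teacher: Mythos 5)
Your overall decomposition (discretization bias, level-$0$ error, and per-level CPF errors) and your per-level mean-square bound $C(\Delta_l^{1/2}/N_l + \Delta_l/M_l)$ match the paper's route: the paper obtains the $\Delta_l/M_l$ part from its Lemma \ref{lm:pi_ml} (via the coupled analysis of the two runs of the law-approximation algorithm with common Brownian increments, exactly as you anticipate in your ``part (ii)''), and the $\Delta_l^{1/2}/N_l$ part from the conditional MLPF theory of \cite{mlpf}. The paper also uses the same crude bound $(\sum_{l}a_l)^2\le L\sum_l a_l^2$ for the law-approximation sum, which is where the factor $L$ comes from.

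The genuine gap is in your final summation step for the $N$-driven part. You assert that, because the CPFs at different levels are independent, ``cross terms of the total error vanish in expectation,'' so that $\sum_l \Delta_l^{1/2}/N_l$ ``appears directly.'' Independence gives factorization of the cross terms into products of first moments, not vanishing, and the per-level errors $(\pi_t^{l,M_l,N_l}-\pi_t^{l-1,M_l,N_l})(\varphi)-(\pi_t^{l,M_l}-\pi_t^{l-1,M_l})(\varphi)$ are \emph{biased}: the paper's Lemma \ref{lm:mlpf_increment} shows their means are only $O(\Delta_l^{1/4}/N_l)$, not zero. Consequently the cross terms contribute $\sum_{p\neq q}\Delta_p^{1/4}\Delta_q^{1/4}/(N_pN_q)$, and without further input this is only bounded by $L\sum_l\Delta_l^{1/2}/N_l$ via Cauchy--Schwarz --- worse than the stated theorem. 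This is precisely why Assumption \hyperref[assump:A3]{(A3)} appears in the hypotheses: it forces $N_l\geq (L-l)^2/2$, so $\sum_l 1/N_l$ is bounded by an absolute constant and the cross-term sum collapses to $C\sum_l\Delta_l^{1/2}/N_l$. Your proposal never establishes the first-moment (bias) bound $\Delta_l^{1/4}/N_l$ and never invokes (A3), so as written it either proves the weaker bound with an extra $L$ on the $N$-sum or rests on the false claim that the cross terms vanish. To repair it you need both ingredients: a first-moment version of your per-level estimate for the CPF error, and the summability of $1/N_l$ guaranteed by (A3).
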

Theorems \ref{thm:single_level_error} and \ref{thm:ml_error} are proven in the appendix. Assumptions \hyperref[assump:W1]{(W1-2)} are not needed to bound the variance of the estimates $\pi_t^{L,M,N}(\varphi)$ and $\widehat{\pi_t^L(\varphi)}$.

\subsection{Comments on Computational Cost}

Consider the estimator $\pi^{L,M,M}_t(\varphi)$, to achieve an error of order $\mathcal{O}(\epsilon^2)$ for $\epsilon>0$. Theorem \ref{thm:single_level_error} provides the guideline of selecting $L\propto |\log(\epsilon)|$ and $M,N\propto \epsilon^{-2}$, this choice leads to a cost of order $\mathcal{O}(M(M+N)\Delta_L) = \mathcal{O}(\epsilon^{-5})$. The multi-level estimator $\widehat{\pi_t^L(\varphi)}$ can achieve the same order of error $\mathcal{O}(\epsilon^2)$ at a lower cost. By utilizing the right hand side of the inequality in Theorem \ref{thm:ml_error} as a constraint for the error and optimizing the cost given this error constraint using the Lagrange multipliers method, we arrive to the following choices $L\propto |\log(\epsilon)|$, $M_l\propto \epsilon^{-2}|\log(\epsilon)|\Delta_l^{5/6}$ and $N_l\propto \epsilon^{-2}\Delta_l^{1/3}$, the resulting cost using these choices is $\mathcal{O}(\epsilon^{-4}|\log(\epsilon)|^2)$.  In certain scenarios, we believe it is possible to reduce the cost to $\mathcal{O}(\epsilon^{-4})$. 
This conjecture should be
justified by results that show better bounds on the weak error of the output law of Algorithm \ref{alg:basic_method} \cite{chass_2019}, this by itself is not enough, but one expects that there are scenarios
where the bound in Theorem \ref{thm:ml_error} can be improved to the reduced error $\mathcal{O}\left(\Delta_L^2 + \sum_{l=0}^L\frac{\Delta_l^{1/2}}{N_l} +\sum_{l=0}^L\frac{\Delta_l}{M_l}\right)$. This reduced error allows us to reduce the cost to $\mathcal{O}(\epsilon^{-4})$.
Assumption \hyperref[assump:A3]{(A3)} states that the number of particles should decrease relatively quickly. 
The increase rate stated in the assumption is very mild and not restrictive at all.  In fact $N_0,\dots,N_L$ are expected to decrease exponentially which is the case for the optimal choice discussed above.

\section{Numerical Results}\label{sec:numerics}
\subsection{Models}
\subsubsection*{Kuramoto Model}
We consider the Kuramoto model $\{X_t\}_{t\geq0}$ defined by the Mckean-Valsov SDE
\begin{equation*}
        dX_t = \left(\theta + \int\sin(X_t - y)d\mu_t(y)\right)dt + \sigma dW_t,\quad X_0 = x_0\in\mathbb{R},\  t\in[0,T],
\end{equation*}
where $\mu_t$ is the law of $X_t$, $\theta$ be a random variable, $\sigma>0$, and $T\in\mathbb{N}$. The Kuramoto model is a well-known mean field game model and it is frequently used for numerical demonstrations. 
Note that we expect that we can obtain the improved cost of $\mathcal{O}(\epsilon^{-4})$ in this case, due to the fact that the diffusion coefficient is constant.

\subsubsection*{Modified Kuramoto Model}
In this demonstration consider the process ${X_t}_{t\geq0}$ the Mckean-Vlasov SDE
\begin{equation*}
        dX_t = \left(\theta + \int\sin(X_t - y)d\mu_t(y)\right)dt + \left(\frac{\sigma}{1+X_t^2}\right)dW_t,\quad X_0 = x_0\in\mathbb{R},\  t\in[0,T],
\end{equation*}
where $\mu_t$ is the law of $X_t$, $\theta$ be a random variable, $\sigma>0$, and $T\in\mathbb{N}$. We call it the modified Kuramoto Model.  We consider this case as we expect that the non-constant diffusion coefficient
should demonstrate our theorerical results in the previous section, in that the computational cost should be
 $\mathcal{O}(\epsilon^{-4}|\log(\epsilon)|^2)$.

\subsection{Simulation Results}
For both models we impose observations $\{Y_k\}_{t=1}^T$ at unit times that follow $Y_k|X_k=x_k \sim \mathcal{N}(x_k, \tau^2)$ for $k\in\{1,\dots,T\}$ and set $x_0=1$, $\theta=0$, $\sigma=0.2$, $\tau=1$, and $T=50$. We simulated a path from each model using Algorithm \ref{alg:basic_method} with level $9$ and $5000$ particles. We compute the estimator $\pi^{L,M,N}_T(\varphi)$ and $\widehat{\pi^L_T(\varphi)}$ for the function $\varphi(x)=x$ where $M,N$ are chosen as discussed in subsection \ref{sec:results}. In order to calculate the mean square errors (MSE) of the two estimators we considered a proxy ground truth for the filter, this ground truth was obtained by computing $\pi^{L,M,N}_T(\varphi)$ at level $7$. The MSEs in our simulation were calculated using $128$ independent replications. 

Figure \ref{fig:kuramoto} shows log-log plots for the mean square errors against the theoretical costs for the two filter estimators applied to filtering the Kuramoto model and the modified Kuramoto.
In both cases we expect the multilevel methods to have rates of about -2 (Kuramot0) and -2.2,
reflecting the costs of $\mathcal{O}(\epsilon^{-4})$ and  $\mathcal{O}(\epsilon^{-4}|\log(\epsilon)|^2)$,
 and this is exactly what we observe.  For the case of the PF (single level) the rate should be about -2.5, which
gives the cost  $\mathcal{O}(\epsilon^{-5})$.  The plots show the expected improvements of the multilevel method.


\begin{figure}
    \includegraphics[width=0.5\linewidth]{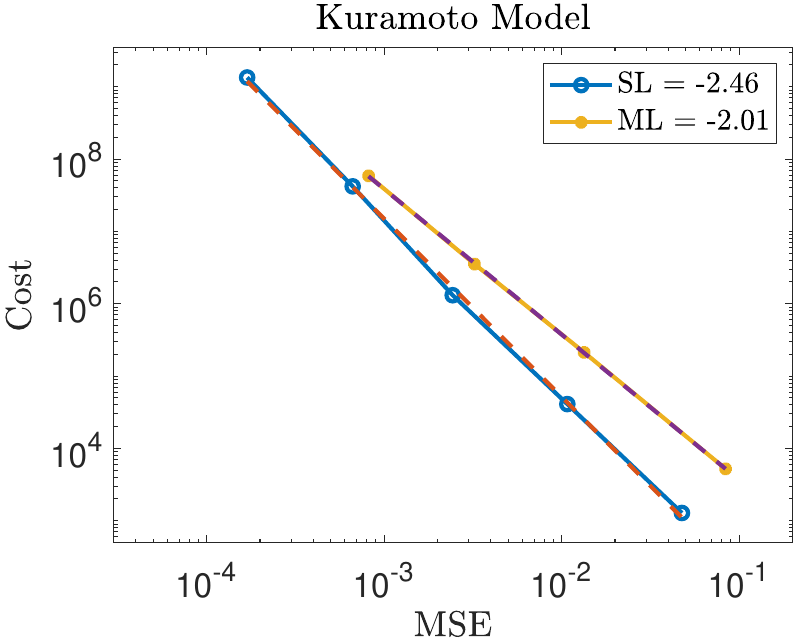}
    \includegraphics[width=0.5\linewidth]{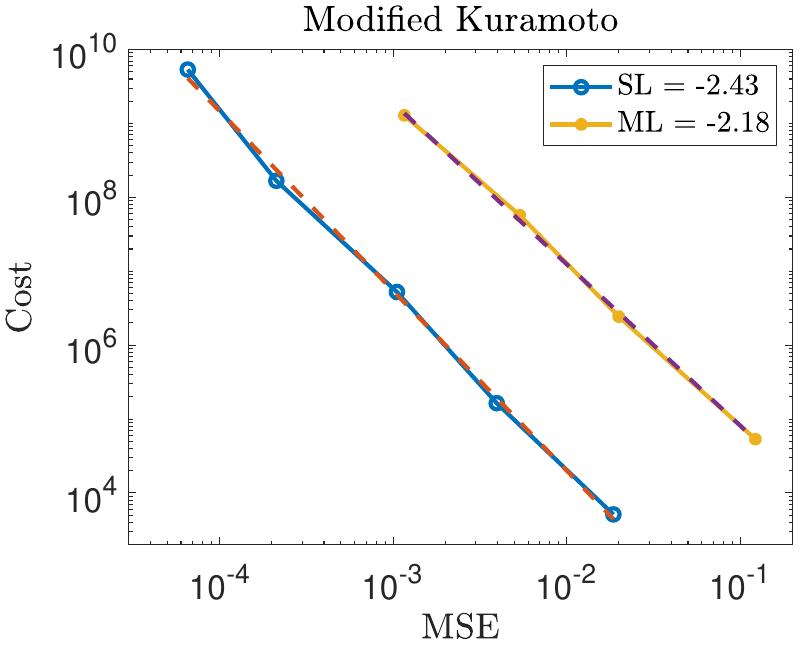}
    \caption{Left: Kuramoto Model: Single level (SL) for $2\leq L\leq 6$, Multi-level (ML) for $3\leq L\leq 6$.\\ Right: Modified Kuramoto Model: Single level (SL) for $2\leq L\leq 6$, Multi-level (ML) for $3\leq L\leq 6$.}
    \label{fig:kuramoto}
\end{figure}

\subsubsection*{Acknowledgements}

AJ was supported by start-up funding at CUHK-SZ.

\appendix
\section{Proofs}
\subsection{Structure}
The following appendix contains several technical results that are used to prove Theorems \ref{thm:single_level_error} and \ref{thm:ml_error}.  We begin by a short section on the additional notations and auxiliary processes that are to be used in the appendix. Section \ref{appen:Z_l_M_increments_bounds} is devoted to studying the auxiliary processes $\{Z^{l,M,i}\}_{i=1}^M$, which are defined below. Sections \ref{appen:Z_l_M_increments_bounds} and \ref{appen:pi_increments_bounds} provide technical results leading to the proof of Lemma \ref{lm:pi_ml}. Section \ref{appen:proof_sl} introduces further several technical results and employs Lemma \ref{lm:pi_ml} to ultimately prove Theorem \ref{thm:single_level_error}. Section \ref{appen:proof_ml} presents a technical discussion and uses the previously proven results in order to present a proof for Theorem \ref{thm:ml_error}. To understand the appendix, it should be read in chronological order.

\subsection{Notation}
Let $(\Omega,\mathcal{F},\mathbb{P})$ be a measure space, $W_t$ a Brownian motion on $\Omega$, and $X_t$ be the solution of SDE \eqref{eq:sde}. For random variables $A_1,\dots,A_n$ and $\sigma$-subalgebras $\mathcal{F}_1,\dots\mathcal{F}_n$ we define $\sigma(A_1,\dots,A_n,\mathcal{F}_1,\dots,\mathcal{F}_n)$ to be the smallest $\sigma$-subalgebra $\mathcal{G}$ of $\mathcal{F}$ that contains the set $\bigcup_{i=1}^n\mathcal{F}_i$ and that satisfies that the random variables $A_1,\dots,A_n$ are measurable with respect to $\mathcal{G}$. We consider step 2. of Algorithm \ref{alg:cpf}: This step calls Algorithm \ref{alg:basic_method_coup}, and step 2. of Algorithm \ref{alg:cpf} is called $T$ times. For $(l,M,N)\in\mathbb{N}^3$ given, we then use the associated notation:
\sloppy
\begin{itemize}
    \item $\{\mu_{k\Delta_l}^{l,M}\}_{k=0}^{k=T\Delta_l^{-1}}$ and $\{\mu_{k\Delta_{l-1}}^{l-1,M}\}_{k=0}^{k=T\Delta_{l-1}^{-1}}$ are the output measures of all the iterations of step 2. in \ref{alg:cpf}.
    \item $\{\{Z_{k\Delta_l}^{l,M,i}\}_{k=0}^{k=T\Delta_l^{-1}}\}_{i=1}^M$ and $\{\{Z_{k\Delta_{l-1}}^{l-1,M,i}\}_{k=0}^{k=T\Delta_{l-1}^{-1}}\}_{i=1}^M$ are the particles that generated the outputs of all the iterations of step 2. in \ref{alg:cpf}. Let $\mathcal{F}^{l,M}_{k\Delta_l} = \sigma(\{\{Z_{j\Delta_l}^{l,M,i}\}_{j=0}^{j=k}\}_{i=1}^M)$ and $\mathcal{F}^{l-1,M}_{k\Delta_{l-1}} =\sigma(\{\{Z_{j\Delta_{l-1}}^{l-1,M,i}\}_{j=0}^{j=k}\}_{i=1}^M)$ be the natural filterations of these particles respectively.
    \item $\{B^i_t\}_{i=1}^M$ are the Brownian motions generated in all the iterations of step 2. in Algorithm \ref{alg:cpf} in order to generate the particles $\{\{Z_{k\Delta_l}^{l,M,i}\}_{k=1}^{k=T\Delta_l^{-1}}\}_{i=1}^M$ and $\{\{Z_{k\Delta_{l-1}}^{l-1,M,i}\}_{k=1}^{k=T\Delta_{l-1}^{-1}}\}_{i=1}^M$. Denote $\Delta B^{l,i}_t = B^{i}_t - B^{i}_{t-\Delta_l}$ for $t\in[\Delta_l,T]$ and $i\in\{1,\dots,M\}$. Similarly denote $\Delta W^l_t = W_t - W_{t-\Delta_l}$ for $t\in[\Delta_l,T]$.
    \item For $s\in\{l-1,l\}$ define the discrete process $\{\{Z_{k\Delta_s}^{s,i}\}_{k=0}^{k=T\Delta_l^{-1}}\}_{i=1}^M$ as the solution of the equations
        \begin{eqnarray}
        Z_{(k+1)\Delta_s}^{s,i}  =  Z_{k\Delta_s}^{s,i} + a\left(Z_{k\Delta_s}^{s,i},\overline{\xi}_1(Z_{k\Delta_s}^{s,i},\mu_{k\Delta_l}^{s})\right) + b\left(Z_{k\Delta_s}^{s,i},\overline{\xi}_2(Z_{k\Delta_s}^{s,i},\mu_{k\Delta_l}^{s})\right)\Delta B^{s,i}_{(k+1)\Delta_s}\label{eq:Z_l_i},
        \end{eqnarray}
        with $\mu^s_{k\Delta_s}$ being the law of $Z^{s,1}_{k\Delta_s}$ and with the initial condition $Z^{s,i}_0=x_0$ for every $(k,i)\in\{0,\dots,T\Delta_s^{-1}-1\}\times\{1,\dots,M\}$.
    \item For $s\in\{l-1,l\}$ define the processes $\tilde{Z}^{s,M}_t$, and $\tilde{Z}^{s}_t$, as the solutions of 
            \begin{eqnarray}\label{eq:Z_tilde_1}
            \tilde{Z}_{(k+1)\Delta_s}^s  =  \tilde{Z}_{k\Delta_s}^s + a\left(\tilde{Z}_{k\Delta_s}^s,\overline{\xi}_1(\tilde{Z}_{k\Delta_s}^s,\mu_{k\Delta_s}^s)\right) +b\left(\tilde{Z}_{k\Delta_s}^s,\overline{\xi}_2(\tilde{Z}_{k\Delta_s}^s,\mu_{k\Delta_s}^s)\right)\Delta W^{s}_{(k+1)\Delta_s},\\
            \tilde{Z}_{(k+1)\Delta_s}^{s,M} =  \tilde{Z}_{k\Delta_s}^{s,M} + a\left(\tilde{Z}_{k\Delta_s}^{s,M},\overline{\xi}_1(\tilde{Z}_{k\Delta_s}^{s,M},\mu_{k\Delta_s}^{s,M})\right) +b\left(\tilde{Z}_{k\Delta_s}^{s,M},\overline{\xi}_2(\tilde{Z}_{k\Delta_s}^{s,M},\mu_{k\Delta_s}^{s,M})\right)\Delta W^{s}_{(k+1)\Delta_s},
            \end{eqnarray}
            with $\mu_{k\Delta_s}^s$ being the law of $Z_{k\Delta_s}^{s,1}$, $\mu_{k\Delta_s}^{s,M}=\frac{1}{M}\sum_{i=1}^M\delta_{Z_{k\Delta_s}^{s,M,i}}$ and with the initial condition $\tilde{Z}^{s}_0=\tilde{Z}^{s,M}_0=x_0$ for every $k\in\{0,\dots,T\Delta_{s}^{-1}-1\}$. Conditioning on the  $\sigma$-algebra $\mathcal{F}^{s,M}_t$ denote by $P_{\mu_{t-1}^{s,M},t}^{s,M}$ the transition kernel the process $\{\tilde{Z}^{s,M}_t\}_{t\in\{0,1,\dots,T\}}$.
    \item For $\varphi\in\mathcal{B}(\mathbb{R}^d)$ define
        $$
        \gamma_t(\varphi) := \int_{\mathbb{R}^{dt}}\varphi(x_t)\left\{\prod_{p=1}^t G(x_p,y_p)\right\}\prod_{p=1}^tP_{\mu_{p-1},p}(x_{p-1},dx_p),
        $$
        $$
        \gamma_t^l(\varphi) := \int_{\mathbb{R}^{dt}}\varphi(x_t)\left\{\prod_{p=1}^t G(x_p,y_p)\right\}\prod_{p=1}^tP_{\mu_{p-1}^l,p}^l(x_{p-1},dx_p),
        $$
        and for $s\in\{l-1,l\}$ define
        $$
        \gamma_t^{l,M}(\varphi) := \int_{\mathbb{R}^{dt}}\varphi(x_t)\left\{\prod_{p=1}   ^t G(x_p,y_p)\right\}\prod_{p=1}^tP_{\mu_{p-1}^{l,M},p}^{l,M}(x_{p-1},dx_p).
        $$
    \item Define $\pi^{l,M}_t(\varphi) = \gamma^{l,M}_t(\varphi)/\gamma^{l,M}_t(1)$.
\end{itemize}
The Brownian motions $\{B_t^i\}_{i=1}^M$ and $W_t$ are independent. Notice that $\gamma_t^{l,M}(\varphi)$ and $\gamma_t^{l-1,M}(\varphi)$ are random variables measurable with respect to $\sigma(\mathcal{F}^{l-1,M}_t,\mathcal{F}^{l,M}_t)$. The process $\tilde{Z}_t^s$ is the Euler-Maruyama discretization for the process $X_t$. We introduce the process $\tilde{Z}_t^{s,M}$ to approximate $\tilde{Z}_t^s$. The process $\tilde{Z}_t^{s,M}$ utilizes the particles $\{Z_t^{s,M,i}\}_{i=1}^M$ to create an empirical measure that approximates the measure $\mu^s_t$ independently of $W_t$. Our main objective is study the increments $\pi_t^{l,M,N}(\varphi)-\pi_t^{l-1,M,N}(\varphi)$. To achieve this, we investigate the increments $Z_t^{l,M,i}-Z_t^{l-1,M,i}$ and $\tilde{Z}_t^{l,M}-\tilde{Z}_t^{l-1,M}$. Finally we leverage the obtained results and the theory of multi-level particle filters to bound the $\mathbb{L}_2$ error of the estimator $\widehat{\pi_t^L(\varphi)}$.\\
Throughout the proofs below we will denote constants by $C$. For concise notation $C$ may change values from step to another. Unless explicitly stated $C$ is independent of $l$,$M$,$N$, and $i$.
\subsection{Bounds on $Z^{l,M,i}_t$ increments}
\label{appen:Z_l_M_increments_bounds}
\begin{lem}\label{lm:technical_lm_1}
    Let $\Delta>0$ and let $\{u_{k\Delta}\}$ be a sequence of non-negative real numbers indexed by the set $\Delta\mathbb{N}_0=\{k\Delta:k\in\mathbb{N}_0\}$. Suppose that there exist constants $C_1,C_2>0$ for the which for every $k\in\mathbb{N}$ the following inequality holds:
    \begin{equation*}
        u_{k\Delta} \leq (1+C_1\Delta)u_{(k-1)\Delta} + C_2\Delta.
    \end{equation*}
    Then, the following inequality holds for every $k\in\mathbb{N}_0$
    \begin{equation*}
        u_{k\Delta} \leq e^{k\Delta C_1}(u_0 + k\Delta C_2).
    \end{equation*}
\end{lem}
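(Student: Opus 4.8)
The plan is to prove the bound by induction on $k\in\mathbb{N}_0$, viewing the recursion as a discrete Gr\"onwall inequality. The only analytic fact required is the elementary estimate $1+x\le e^{x}$, which holds for all $x\in\mathbb{R}$ and in particular for $x=C_1\Delta>0$.

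The base case $k=0$ is immediate, since the claimed inequality reads $u_0\le e^{0}(u_0+0)=u_0$. For the inductive step, assume the bound holds at index $k-1$, that is $u_{(k-1)\Delta}\le e^{(k-1)\Delta C_1}(u_0+(k-1)\Delta C_2)$. Substituting this into the hypothesis $u_{k\Delta}\le(1+C_1\Delta)u_{(k-1)\Delta}+C_2\Delta$ and using $1+C_1\Delta\le e^{C_1\Delta}$ gives
$$u_{k\Delta}\le e^{C_1\Delta}e^{(k-1)\Delta C_1}(u_0+(k-1)\Delta C_2)+C_2\Delta=e^{k\Delta C_1}(u_0+(k-1)\Delta C_2)+C_2\Delta.$$
Since $e^{k\Delta C_1}\ge 1$, we bound the stray term via $C_2\Delta\le C_2\Delta\,e^{k\Delta C_1}$, and collecting terms yields
$$u_{k\Delta}\le e^{k\Delta C_1}(u_0+(k-1)\Delta C_2+\Delta C_2)=e^{k\Delta C_1}(u_0+k\Delta C_2),$$
which is precisely the asserted inequality at index $k$, closing the induction.

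An equivalent route is to unroll the recursion into $u_{k\Delta}\le(1+C_1\Delta)^{k}u_0+C_2\Delta\sum_{j=0}^{k-1}(1+C_1\Delta)^{j}$, bound the geometric sum crudely by $k(1+C_1\Delta)^{k}$, and then apply $(1+C_1\Delta)^{k}\le e^{kC_1\Delta}$ to both terms. Either way the argument is routine and there is no genuine obstacle; the only point requiring a sliver of care is the treatment of the inhomogeneous term $C_2\Delta$, where the bound $e^{k\Delta C_1}\ge 1$ is exactly what is needed to absorb it into the bracket $u_0+k\Delta C_2$.
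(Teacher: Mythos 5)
Your proof is correct. The paper itself gives no proof of this lemma (it is stated without a proof environment, being treated as a standard discrete Gr\"onwall inequality), and your induction --- using $1+C_1\Delta\le e^{C_1\Delta}$ and absorbing the inhomogeneous term $C_2\Delta$ via $e^{k\Delta C_1}\ge 1$ --- is exactly the routine argument one would supply; the unrolled geometric-sum variant you sketch is equally valid.
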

\begin{lem}\label{lm:technical_lm_3}
    Let $k\in\mathbb{N}$, there exists $C>0$ such that for all $(A,B)\in(\mathbb{R}^d)^2$ The following inequality holds
    \begin{eqnarray}
        \|A+B\|^{2k} \leq \|A\|^{2k} + 2k\|A\|^{2k-2}A^TB + C\|B\|^2(\|A\|^{2k-2}+\|B\|^{2k-2}).
    \end{eqnarray}
\end{lem}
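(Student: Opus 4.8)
The plan is to reduce the claim to the scalar binomial theorem together with one elementary monomial inequality, so that the whole argument becomes a bookkeeping exercise on the degrees of $\|A\|$ and $\|B\|$.

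First I would set $s := \|A\|^2 \ge 0$ and $t := 2A^\top B + \|B\|^2$, so that $\|A+B\|^2 = s + t$ and hence $\|A+B\|^{2k} = (s+t)^k$. Expanding by the binomial theorem gives $(s+t)^k = s^k + k s^{k-1} t + \sum_{j=2}^k \binom{k}{j} s^{k-j} t^j$. The first term is exactly $\|A\|^{2k}$, and $k s^{k-1} t = 2k\|A\|^{2k-2} A^\top B + k \|A\|^{2k-2}\|B\|^2$, which accounts for the linear term in the statement plus a remainder $k\|A\|^{2k-2}\|B\|^2$ already of the desired form $\le C\|B\|^2 \|A\|^{2k-2}$. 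It therefore remains to bound $k\|A\|^{2k-2}\|B\|^2 + \sum_{j=2}^k \binom{k}{j} s^{k-j} t^j$ by $C\|B\|^2(\|A\|^{2k-2} + \|B\|^{2k-2})$.

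Next, using $|t| \le 2\|A\|\|B\| + \|B\|^2$ and expanding $(2\|A\|\|B\| + \|B\|^2)^j$, each term of $s^{k-j}|t|^j$ for $j \ge 2$ is a constant multiple of a monomial $\|A\|^p \|B\|^q$ with $p + q = 2k$ and $q \ge j \ge 2$ (the last bound because in $(2\|A\|\|B\|)^i(\|B\|^2)^{j-i}$ the $\|B\|$-degree is $2j - i \ge j$, and multiplying by $s^{k-j}$ adds only to the $\|A\|$-degree). Thus $p \le 2k-2$, and one may factor out $\|B\|^2$, leaving a monomial $\|A\|^p\|B\|^{q-2}$ with $p + (q-2) = 2k-2$ and nonnegative exponents. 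The elementary inequality $x^\alpha y^\beta \le \max(x,y)^{\alpha+\beta} \le x^{\alpha+\beta} + y^{\alpha+\beta}$ for $x,y \ge 0$ and $\alpha,\beta \ge 0$ then gives $\|A\|^p\|B\|^{q-2} \le \|A\|^{2k-2} + \|B\|^{2k-2}$. Summing the finitely many terms with their binomial coefficients produces an explicit constant; one checks, for instance, that $C = k + 4^k$ suffices.

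There is no genuine obstacle here; the only point requiring care is the degree bookkeeping in the previous paragraph — one must verify that every monomial arising from $s^{k-j}|t|^j$ with $j \ge 2$ carries $\|B\|$ to a power at least $2$ and $\|A\|$ to a power at most $2k-2$, which is exactly what allows a factor $\|B\|^2$ to be pulled out while the remaining total degree matches $2k-2$. (For $k=1$ the sum over $j$ is empty and the statement reduces to $\|A+B\|^2 = \|A\|^2 + 2A^\top B + \|B\|^2$.)
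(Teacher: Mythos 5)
Your proof is correct. The paper states this lemma without proof, so there is nothing to compare against; your argument --- writing $\|A+B\|^{2k}=(\|A\|^2+2A^{\top}B+\|B\|^2)^k$, expanding by the binomial theorem, isolating the $j=0$ and $j=1$ terms, and absorbing everything else into $C\|B\|^2(\|A\|^{2k-2}+\|B\|^{2k-2})$ via Cauchy--Schwarz and the degree count $p+q=2k$, $q\geq 2$ --- is the standard one the authors evidently had in mind, and your bookkeeping (including the explicit constant $C=k+4^k$) checks out.
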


\begin{lem}\label{lm:discrete_mv_rate_2_part_1}
Assume \hyperref[assump:A1]{(A1)}. Let $k\in\mathbb{N}$. There exists a constant $C<+\infty$ such that for every $(l,M)\in\mathbb{N}_0\times\mathbb{N}$, $t\in\{0,\Delta_l,\dots,T\}$, and $i\in\{1,\dots,M\}$:
    \begin{equation*}
        \max\{\mathbb{E}[\|Z^{l,M,i}_t\|^k], \mathbb{E}[\|Z^{l,i}_t\|^k]\}\leq C,
    \end{equation*}
\end{lem}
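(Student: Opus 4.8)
The plan is to prove the uniform moment bound by a discrete Gr\"onwall-type argument, treating the two processes $Z^{l,M,i}_t$ and $Z^{l,i}_t$ simultaneously since they satisfy structurally identical recursions (compare \eqref{eq:Z_l_i} and step~2 of Algorithm~\ref{alg:basic_method_coup}): the only difference is whether the driving empirical measure is $\mu^{l}_{k\Delta_l}$ (exact law) or $\mu^{l,M}_{k\Delta_l}$ (the $M$-particle empirical measure), and in both cases the coefficients $a,b$ evaluated at $(\,\cdot\,,\overline{\xi}_m(\,\cdot\,,\mu))$ are bounded by a constant that does not depend on $\mu$, $l$, $M$ or $i$, thanks to \hyperref[assump:A1]{(A1)} (the functions $a,b,\xi_1,\xi_2$ are all in $\mathcal{B}_b$).

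\noindent\textbf{Key steps.} First, fix $l,M,i$ and abbreviate $Z_k := Z^{l,M,i}_{k\Delta_l}$, $A_k := a(Z_k,\overline{\xi}_1(Z_k,\mu_{k\Delta_l}^{l,M}))$, and $b_k := b(Z_k,\overline{\xi}_2(Z_k,\mu^{l,M}_{k\Delta_l}))$, so that $Z_{k+1} = Z_k + A_k\Delta_l + b_k \Delta B^{l,i}_{(k+1)\Delta_l}$, where $\Delta B^{l,i}_{(k+1)\Delta_l}\sim \mathcal{N}_d(0,\Delta_l I_d)$ independently of $\mathcal{F}^{l,M}_{k\Delta_l}$. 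Here I would reduce to even exponents: it suffices to prove the bound for $k=2n$ even, since for odd powers one uses $\mathbb{E}\|Z\|^{2n-1}\le \mathbb{E}[1+\|Z\|^{2n}]$. Second, apply Lemma~\ref{lm:technical_lm_3} with $A = Z_k + A_k\Delta_l$ (the $\mathcal{F}^{l,M}_{k\Delta_l}$-measurable part) and $B = b_k\Delta B^{l,i}_{(k+1)\Delta_l}$: this gives
\begin{equation*}
\|Z_{k+1}\|^{2n} \le \|Z_k + A_k\Delta_l\|^{2n} + 2n\|Z_k+A_k\Delta_l\|^{2n-2}(Z_k+A_k\Delta_l)^{T}b_k\Delta B^{l,i}_{(k+1)\Delta_l} + C\|b_k\Delta B\|^2\big(\|Z_k+A_k\Delta_l\|^{2n-2}+\|b_k\Delta B\|^{2n-2}\big).
\end{equation*}
Third, take conditional expectation given $\mathcal{F}^{l,M}_{k\Delta_l}$: the middle (martingale) term vanishes by independence and mean zero of the increment; using $\mathbb{E}\|\Delta B\|^{2p}\le C_p \Delta_l^{p}$ and $\|b_k\|$ bounded, the last bracket contributes $\le C\Delta_l(\|Z_k+A_k\Delta_l\|^{2n-2}+\Delta_l^{n-1})$. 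Fourth, expand $\|Z_k + A_k\Delta_l\|^{2n}$ via Lemma~\ref{lm:technical_lm_3} again (now with $A=Z_k$, $B = A_k\Delta_l$, $\|A_k\|$ bounded): this yields $\|Z_k+A_k\Delta_l\|^{2n}\le \|Z_k\|^{2n}+C\Delta_l\|Z_k\|^{2n-1}+C\Delta_l^2(\|Z_k\|^{2n-2}+1)$, and similarly for the $2n-2$ power term. Using Young's inequality $\|Z_k\|^{2n-1}\le \|Z_k\|^{2n}+1$ and $\|Z_k\|^{2n-2}\le \|Z_k\|^{2n}+1$ to absorb the lower-order powers, one arrives at
\begin{equation*}
\mathbb{E}\big[\|Z_{k+1}\|^{2n}\ \big|\ \mathcal{F}^{l,M}_{k\Delta_l}\big] \le (1 + C_1\Delta_l)\|Z_k\|^{2n} + C_2\Delta_l
\end{equation*}
with $C_1,C_2$ depending only on $n$, $d$ and the $\mathcal{B}_b$-norms of $a,b$ (hence not on $l,M,i$ or $t$; note for $n\ge 2$ the extra $\Delta_l^2$-terms are absorbed into $\Delta_l$ since $\Delta_l\le 1$). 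Fifth, take full expectations, set $u_{k\Delta_l} = \mathbb{E}\|Z_k\|^{2n}$, and apply Lemma~\ref{lm:technical_lm_1}: since $u_0 = \|x_0\|^{2n}$ and $k\Delta_l \le T$, we get $u_{k\Delta_l}\le e^{TC_1}(\|x_0\|^{2n} + TC_2) =: C < \infty$, uniformly in $(l,M,t,i)$. The identical argument, verbatim with $\mu^{l,M}$ replaced by $\mu^l$ and $\Delta B^{l,i}$ by the corresponding Gaussian increment, handles $Z^{l,i}_t$; taking the maximum of the two constants completes the proof.

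\noindent\textbf{Main obstacle.} The argument is essentially routine once the coefficients are known to be bounded; the only point requiring care is bookkeeping the $\Delta_l$-powers so that the per-step recursion genuinely has the form $u_{k+1}\le (1+C_1\Delta_l)u_k + C_2\Delta_l$ rather than, say, $u_{k+1}\le (1+C_1\Delta_l)u_k + C_2$, which would not give a uniform bound. This is exactly why Lemma~\ref{lm:technical_lm_3} is invoked twice — to peel off one factor of $\Delta_l$ from every term that is not already $O(\|Z_k\|^{2n}\Delta_l)$ — and why the martingale term must be isolated by conditioning before any norm expansion, so that it drops out cleanly and does not generate an $O(1)$ contribution. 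A secondary point worth stating explicitly is the uniformity of $C_1,C_2$ in the empirical measure: since $|\overline{\xi}_m(x,\nu)| \le |\xi_m|_0$ for \emph{any} probability measure $\nu$, the bounds on $\|A_k\|,\|b_k\|$ hold pathwise and deterministically, so the random measure $\mu^{l,M}_{k\Delta_l}$ causes no difficulty at all.
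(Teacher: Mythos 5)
Your proposal is correct and follows essentially the same route as the paper's proof: reduce to even exponents, apply Lemma \ref{lm:technical_lm_3} twice to peel a factor of $\Delta_l$ off each increment term, use boundedness of $a,b,\xi_1,\xi_2$ (uniformly over the driving measure) together with the mean-zero Gaussian increments to obtain the one-step recursion $u_{k\Delta_l}\le(1+C\Delta_l)u_{(k-1)\Delta_l}+C\Delta_l$, and conclude by the discrete Gr\"onwall Lemma \ref{lm:technical_lm_1}. Your write-up is somewhat more explicit than the paper's (in particular about conditioning out the martingale term and about uniformity in the empirical measure), but the argument is the same.
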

\begin{proof}
    It is enough to prove the inequality for even positive integers $k$, the general case follows from the inequality $\mathbb{E}[\|A\|^{k}]\leq \mathbb{E}[\|A\|^{2k}]^{1/2}$ for any $\mathbb{R}^d$ valued random variable $A$. Using the definition \eqref{eq:Z_l_i} of $Z^{l,i}_t$, applying Lemma \ref{lm:technical_lm_3} twice, Assumption \hyperref[assump:A1]{(A1)} that the functions $a,b$ are bounded, $\Delta_l\leq 1$, $\mathbb{E}[\Delta B^{l,i}_{t-\Delta_l}]=0$, and $\mathbb{E}[\|B^{l,i}_{t-\Delta_l}\|^k]\leq C\Delta_l^{k/2}$ we have
    \begin{equation}\label{eq:expand_1}
        \begin{split}
            \mathbb{E}[\|Z^{l,i}_{t}\|^k] =\;& \mathbb{E}\left[ \|Z^{l,i}_{t-\Delta_l} + a(Z^{l,i}_{t-\Delta_l},\bar{\xi}_2(Z^{l,i}_{t-\Delta_l},\mu^{l}_{t-\Delta_l}))\Delta + b(Z^{l,i}_{t-\Delta_l},\bar{\xi}_2(Z^{l,i}_{t-\Delta_l},\mu^{l}_{t-\Delta_l}))\Delta B^{l,i}_{t-\Delta_l}\|^k\right]\\
            \leq\;& \mathbb{E}[\|Z^{l,i}_{t-\Delta_l}\|^k] + C\Delta_l\max_{r\in\{1,2,3,4\}}\mathbb{E}[\|Z^{l,i}_{t-\Delta_l}\|^{k-r}] + C\Delta_l^{k/2}\\
            \leq\;& (1+C\Delta_l)\mathbb{E}[\|Z^{l,i}_{t-\Delta_l}\|^{k}] + C\Delta_l.
        \end{split}
    \end{equation}
    \sloppy The last line follows from the inequality $x^s\leq 1+x^k$ for $0\leq s\leq k$ and $x\geq0$.
    Applying Lemma \ref{lm:technical_lm_1} to the sequence $\{[\|Z^{l,i}_{s\Delta_l}\|^{k}]\}_{s=0}^{T/\Delta_l}$ yields the inequality $\mathbb{E}[\|Z^{l,i}_{t-\Delta_l}\|^{k}]\leq C$, clearly following similar steps yields $\mathbb{E}[\|Z^{l,M,i}_{t-\Delta_l}\|^{k}]\leq C$. 
\end{proof}

\begin{lem}\label{lm:discrete_mv_rate_2_part_2}
    Assume \hyperref[assump:A1]{(A1)}. Let $k\in\mathbb{N}$. There exists a constant $C<+\infty$ such that for every $(l,M)\in\mathbb{N}\times\mathbb{N}$, $t\in\{0,\Delta_l,\dots,T\}$, and $i\in\{1,\dots,M\}$:
    \begin{equation*}
        \mathbb{E}\left[ \left\|Z^{l,M,i}_t-Z^{l,i}_t\right\|^k \right]\leq C\frac{1}{M^{k/2}},
    \end{equation*}
    \begin{equation*}
        \mathbb{E}\left[ \left\|Z^{l,M,i}_t-Z^{l-1,M,i}_t\right\|^k \right]\leq C\Delta_l^{k/2},
    \end{equation*}
\end{lem}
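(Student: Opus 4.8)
Throughout, it suffices to treat even exponents, the general case following by Jensen's inequality exactly as in Lemma \ref{lm:discrete_mv_rate_2_part_1}; so fix $k\in\mathbb{N}$ and bound the $2k$-th moments. The plan for both inequalities is the same: derive a one-step recursion for the relevant $2k$-th moment, expand the norms with Lemma \ref{lm:technical_lm_3}, control the resulting moment terms (using boundedness and global Lipschitzness of $a,b,\xi_1,\xi_2$ from \hyperref[assump:A1]{(A1)}, together with Lemma \ref{lm:discrete_mv_rate_2_part_1} if moment bounds are needed), and close the recursion with the discrete Grönwall inequality of Lemma \ref{lm:technical_lm_1}. Two structural facts are used repeatedly. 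First, synchronous coupling: $Z^{l,M,i}$ and $Z^{l,i}$ are driven by the same increments $\Delta B^{l,i}$, while $Z^{l,M,i},Z^{l-1,M,i}$ (resp. $Z^{l,i},Z^{l-1,i}$) are driven by the common Brownian motion $B^i$, so the coarse increment over a coarse step equals the sum of the two fine increments. Second, exchangeability: by symmetry of the empirical-measure dynamics the families $\{(Z^{l,M,i},Z^{l,i})\}_{i=1}^M$ and $\{(Z^{l,M,i},Z^{l-1,M,i})\}_{i=1}^M$ are exchangeable in $i$, so the moments we bound do not depend on $i$, and any average $\tfrac1M\sum_{j=1}^M$ of such moments collapses to the single-particle moment by Jensen.

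For the first inequality put $e^i_t = Z^{l,M,i}_t - Z^{l,i}_t$ and subtract \eqref{eq:Z_l_i} from the corresponding recursion for $Z^{l,M,i}_t$. The coefficients differ through the particle position (a term of size $\|e^i_t\|$) and through the mean-field arguments, which I would handle by writing
\[
\overline{\xi}_m(Z^{l,M,i}_t,\mu^{l,M}_t)-\overline{\xi}_m(Z^{l,i}_t,\mu^{l}_t)=\frac1M\sum_{j=1}^M\Bigl(\xi_m(Z^{l,M,i}_t,Z^{l,M,j}_t)-\xi_m(Z^{l,i}_t,Z^{l,j}_t)\Bigr)+\Bigl(\frac1M\sum_{j=1}^M\xi_m(Z^{l,i}_t,Z^{l,j}_t)-\overline{\xi}_m(Z^{l,i}_t,\mu^{l}_t)\Bigr),
\]
bounding the first bracket by $\tfrac{C}{M}\sum_{j=1}^M(\|e^i_t\|+\|e^j_t\|)$ via Lipschitzness of $\xi_m$, and treating the second bracket as a centered empirical average: conditioning on $Z^{l,i}_t$, the variables $\{Z^{l,j}_t\}_{j\ne i}$ are i.i.d.\ with law $\mu^l_t$ and independent of $Z^{l,i}_t$, so after removing the single diagonal term $j=i$ (which is $O(1/M)$ since $\xi_m$ is bounded) the Marcinkiewicz--Zygmund inequality bounds its $2k$-th moment by $CM^{-k}$, uniformly in the conditioning. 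Substituting this into the expansion of $\mathbb{E}[\|e^i_{(k+1)\Delta_l}\|^{2k}]$ from Lemma \ref{lm:technical_lm_3} (the cross term vanishes in conditional expectation because $\Delta B^{l,i}$ is centered and independent of the past), applying Young's inequality, replacing $\tfrac1M\sum_j\mathbb{E}[\|e^j\|^{2k}]$ by $\mathbb{E}[\|e^i\|^{2k}]$ by exchangeability, and using $\Delta_l\le 1$, one gets $u_{(k+1)\Delta_l}\le(1+C\Delta_l)u_{k\Delta_l}+C\Delta_l M^{-k}$ for $u_t=\mathbb{E}[\|e^i_t\|^{2k}]$ with $u_0=0$; Lemma \ref{lm:technical_lm_1} then gives $u_t\le CM^{-k}$ for $t\le T$.

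For the second inequality put $D^i_t = Z^{l,M,i}_t-Z^{l-1,M,i}_t$ and run the recursion over one coarse step $[k\Delta_{l-1},(k+1)\Delta_{l-1}]$, writing $t_0=k\Delta_{l-1}$, $t_{1/2}=t_0+\Delta_l$, $t_1=(k+1)\Delta_{l-1}$ and using $\Delta B^{l-1,i}_{t_1}=\Delta B^{l,i}_{t_{1/2}}+\Delta B^{l,i}_{t_1}$. Grouping the two fine diffusion terms against the single coarse one leaves $[b(Z^{l,M,i}_{t_0},\cdot)-b(Z^{l-1,M,i}_{t_0},\cdot)]\,\Delta B^{l,i}_{t_{1/2}}$, with coefficient of size $\|D^i_{t_0}\|$ plus a measure term, and $[b(Z^{l,M,i}_{t_{1/2}},\cdot)-b(Z^{l-1,M,i}_{t_0},\cdot)]\,\Delta B^{l,i}_{t_1}$, which I split as $[b(Z^{l,M,i}_{t_{1/2}},\cdot)-b(Z^{l,M,i}_{t_0},\cdot)]+[b(Z^{l,M,i}_{t_0},\cdot)-b(Z^{l-1,M,i}_{t_0},\cdot)]$; the first difference has $\mathbb{L}_{2k}$-size $O(\Delta_l^{1/2})$ because $Z^{l,M,i}$ moves by $O(\Delta_l^{1/2})$ over a half step, and in every case the coefficient is measurable with respect to the $\sigma$-algebra against which $\Delta B^{l,i}_{t_1}$ (resp.\ $\Delta B^{l,i}_{t_{1/2}}$) is centered, so the cross terms in the Lemma \ref{lm:technical_lm_3} expansion vanish and the only genuinely new forcing is the $2k$-th moment of the term $O(\Delta_l^{1/2})\times O(\Delta_l^{1/2})$, which is $O(\Delta_l^{2k})\le\Delta_{l-1}\Delta_l^{k}$ since $\Delta_l\le 1$. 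The drift mismatch $\Delta_l a(Z^{l,M,i}_{t_0},\cdot)+\Delta_l a(Z^{l,M,i}_{t_{1/2}},\cdot)-\Delta_{l-1}a(Z^{l-1,M,i}_{t_0},\cdot)$ contributes a term of size $O(\|D^i_{t_0}\|)$ and a term $\Delta_l[a(Z^{l,M,i}_{t_{1/2}},\cdot)-a(Z^{l,M,i}_{t_0},\cdot)]$ of $\mathbb{L}_{2k}$-size $O(\Delta_l^{3/2})$, both harmless, while the measure mismatch $\overline{\xi}_m(Z^{l,M,i},\mu^{l,M})-\overline{\xi}_m(Z^{l-1,M,i},\mu^{l-1,M})$ is bounded by $C(\|D^i\|+\tfrac1M\sum_j\|D^j\|)$ and collapses, after exchangeability, to $\|D^i\|$-type moments. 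Assembling these with Young's inequality yields $v_{(k+1)\Delta_{l-1}}\le(1+C\Delta_{l-1})v_{k\Delta_{l-1}}+C\Delta_{l-1}\Delta_l^{k}$ for $v_t=\mathbb{E}[\|D^i_t\|^{2k}]$ with $v_0=0$, so Lemma \ref{lm:technical_lm_1} gives $v_t\le C\Delta_l^{k}$ at coarse-grid times, and at intermediate fine-grid times one adds a single half-step estimate.

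The main obstacle is the bookkeeping in the second bound: one must arrange the one-coarse-step increment so that every contribution is either measurable against the increment that centers it, so that it injects only an $O(\Delta_l)$-sized (in $\mathbb{L}_{2k}^{1/2k}$) error per coarse step that sums to $O(\Delta_l^{1/2})$ over the $\Delta_{l-1}^{-1}$ steps, or is controlled by $\|D\|$-type moments that feed back into the Grönwall recursion; in particular the $\Delta_l^{1/2}$ rate comes precisely from the half-step displacement of the fine scheme entering the coefficient of $\Delta B^{l,i}_{t_1}$. The analogous but simpler delicate point in the first bound is the centered-empirical-average estimate, where the diagonal term $j=i$ must be isolated before Marcinkiewicz--Zygmund is applied to the remaining i.i.d.\ sum.
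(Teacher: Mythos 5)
Your proposal is correct and follows essentially the same route as the paper's proof: the same one-step error recursions closed by the discrete Gr\"onwall inequality (Lemma \ref{lm:technical_lm_1}), the same splitting of the mean-field term into a Lipschitz part, an isolated $O(1/M)$ diagonal term, and a centered empirical average handled by Marcinkiewicz--Zygmund, and for the second bound the identical regrouping of the coarse step against the two fine steps so that the only new forcing comes from the $O(\Delta_l^{1/2})$ half-step displacement entering the coefficients (your use of exchangeability in place of the paper's $\sup_i$, and your explicit remark about intermediate fine-grid times, are only cosmetic differences).
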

\begin{proof}
    Let $t\in \{\Delta,2\Delta,\dots,T\}$, we have.
	\begin{equation}\label{eq:Z_M_difference}
		\begin{split}
			Z^{l,M,i}_t - Z^{l,i}_t =\;& Z^{l,M,i}_{t-\Delta_l} - Z^{l,i}_{t-\Delta_l} \\+\;& (a(Z^{l,M,i}_{t-\Delta_l},\bar{\xi}_1(Z^{l,M,i}_{t-\Delta_l},\mu^{l,M}_{t-\Delta_l}))-a(Z^{l,i}_{t-\Delta_l},\bar{\xi}_1(Z^{l,i}_{t-\Delta_l},\mu^{l}_{t-\Delta_l})))\Delta_L \\ +\; & (b(Z^{l,M,i}_{t-\Delta_l},\bar{\xi}_2(Z^{l,M,i}_{t-\Delta_l},\mu^{l,M}_{t-\Delta_l}))-b(Z^{l,i}_{t-\Delta_l},\bar{\xi}_2(Z^{l,i}_{t-\Delta_l},\mu^{l}_{t-\Delta_l})))\Delta B^{l,i}_t.
		\end{split}
	\end{equation}
    Using the $C^p$ inequality and that the functions $a$ and $\xi_1$ are bounded and Lipchitz (Assumption \hyperref[assump:A1]{(A1)}) there exists a constant $C>0$ that satisfies
\begin{equation}\label{eq:a_Z_2}
	\begin{split}
		\;&\mathbb{E}\left[\|a(Z^{l,M,i}_{t-\Delta_l},\bar{\xi}_1(Z^{l,M,i}_{t-\Delta_l},\mu^{l,M,i}_{t-\Delta_l}))-a(Z^{l,i}_{t-\Delta_l},\bar{\xi}_1(Z^{l,i}_{t-\Delta_l},\mu^{l,i}_{t-\Delta_l}))\|^k\right]\\
		\leq \;& C\Bigg(\mathbb{E}\left[\|Z^{l,M,i}_{t-\Delta_l} - Z^{l,i}_{t-\Delta_l}\|^k\right]  + \mathbb{E}\Bigg[\Bigg|\frac{1}{M}\sum_{j=1}^M\xi_1(Z^{l,M,i}_{t-\Delta_l}Z^{l,M,j}_{t-\Delta_l})-\frac{1}{M}\sum_{j=1}^M\xi_1(Z^{l,i}_{t-\Delta_l},Z^{l,j}_{t-\Delta_l})\Bigg|^k\Bigg]\\
        +\;& \frac{1}{M^k}\mathbb{E}\Bigg[\Bigg|\sum_{\substack{j=1\\j\not=i}}^M\left(\xi_1(Z^{l,i}_{t-\Delta_l},Z^{l,j}_{t-\Delta_l})-\int \xi_1(Z^{l,i}_{t-\Delta_l},y)d\mu^l_{t-\Delta_l}(y)\right)\Bigg|^k\Bigg]  \Bigg) + C\frac{1}{M^k}.\\
  \end{split}
\end{equation}
    The second term after the inequality is bounded by $\frac{1}{M}\sum_{j=1}^M\mathbb{E}[|\xi_1(Z^{l,M,i}_{t-\Delta_l}Z^{l,M,j}_{t-\Delta_l})-\xi_1(Z^{l,i}_{t-\Delta_l},Z^{l,j}_{t-\Delta_l})|^k]\leq C\sup_i\mathbb{E}[\|Z^{l,M,i}_{t-\Delta_l} - Z^{l,i}_{t-\Delta_l}\|^k]$ by the $C^p$ inequality. 
    For the third term we use the idependence of $Z_t^{l,i}$ and $Z_t^{l,j}$ for $i\not=j$, Fubini Theorem, and Marcinkiewicz-Zygmund inequality: 
    \begin{equation}\label{eq:MZ_1}
        \begin{split}
             \;&\mathbb{E}\Bigg[\Bigg|\sum_{\substack{j=1\\j\not=i}}^M\left(\xi_1(Z^{l,i}_{t-\Delta_l},Z^{l,j}_{t-\Delta_l})-\int \xi_1(Z^{l,i}_{t-\Delta_l},y)d\mu^l_{t-\Delta_l}(y)\right)\Bigg|^k \Bigg]\\
             =\;&\int\mathbb{E}\Bigg[\Bigg|\sum_{\substack{j=1\\j\not=i}}^M\left(\xi_1(z,Z^{l,j}_{t-\Delta_l})-\int \xi_1(z,y)d\mu^l_{t-\Delta_l}(y)d\right)\Bigg|^k \Bigg]\mu^l_{t-\Delta_l}(z)\\
             \leq\;& C\int\mathbb{E}\Bigg[ \Bigg(\sum_{\substack{j=1\\j\not=i}}^M\left(\xi_1(z,Z^{l,j}_{t-\Delta_l})-\int \xi_1(z,y)d\mu^l_{t-\Delta_l}(y)\right)^2\Bigg)^{k/2}\Bigg]\mu^l_{t-\Delta_l}(z)\\
             \leq\;& CM^{k/2},
        \end{split}
    \end{equation}
    where the last line follows from the boundedness of $\xi_1$. Thus we have shown that
    \begin{equation}\label{eq:a_bound_1}
        \begin{split}
            \;&\mathbb{E}\left[\|a(Z^{l,M,i}_{t-\Delta_l},\bar{\xi}_1(Z^{l,M,i}_{t-\Delta_l},\mu^{l,M,i}_{t-\Delta_l}))-a(Z^{l,i}_{t-\Delta_l},\bar{\xi}_1(Z^{l,i}_{t-\Delta_l},\mu^{l,i}_{t-\Delta_l}))\|^k\right]
            \leq C\sup_i\mathbb{E}[\|Z^{l,M,i}_{t-\Delta_l} - Z^{l,i}_{t-\Delta_l}\|^k] + C\frac{1}{M^{k/2}}.
        \end{split}
    \end{equation}
    A similar bound holds for the function $b$. Analogously to the proof of Lemma \ref{lm:discrete_mv_rate_2_part_1}, we apply Lemma \ref{lm:technical_lm_3} twice on $\|Z^{l,M,i}_{t}-Z^{l,i}_{t}\|$ following equation \eqref{eq:Z_M_difference} and using the bound \eqref{eq:a_bound_1} (for both $a$ and $b$) yields
    \begin{equation*}
        \sup_i\mathbb{E}[\|Z^{l,M,i}_{t} - Z^{l,i}_{t}\|^k]\leq (1+C\Delta_l)\sup_i\mathbb{E}[\|Z^{l,M,i}_{t-\Delta_l} - Z^{l,i}_{t-\Delta_l}\|^k] + C\Delta_l\frac{1}{M^{k/2}}.
    \end{equation*}
    Applying Lemma \ref{lm:technical_lm_1} to the sequence $\{\sup_i\mathbb{E}[\|Z^{l,M,i}_{s\Delta_l} - Z^{l,i}_{s\Delta_l}\|^k]\}_{s=0}^{T/\Delta_l}$ and using $Z^{l,i}_0=Z^{l,M,i}_0=x_0$ proves the first inequality. For the second inequality we write
    \begin{equation}\label{eq:L_L+1_decomposition}
        \begin{split}
             &Z^{l,M,i}_{t}  - Z^{l-1,M,i}_{t} \\
             =\;& Z^{l,M,i}_{t-\Delta_{l-1}}  - Z^{l-1,M,i}_{t-\Delta_{l-1}} \\
             +\;& \left(a\left(Z^{l,M,i}_{t-\Delta_{l-1}},\bar{\xi}_{1}(Z^{l,M,i}_{t-\Delta_{l-1}}, \mu^{l,M}_{t-\Delta_{l-1}})\right) - a\left(Z^{l-1,M,i}_{t-\Delta_{l-1}},\bar{\xi}_{1}(Z^{l-1,M,i}_{t-\Delta_{l-1}}, \mu^{l-1,M}_{t-\Delta_{l-1}})\right)\right)\Delta_{l-1} \\
             +\;& \left(b\left(Z^{l,M,i}_{t-\Delta_{l-1}},\bar{\xi}_{2}(Z^{l,M,i}_{t-\Delta_{l-1}}, \mu^{l,M}_{t-\Delta_{l-1}})\right) - b\left(Z^{l-1,M,i}_{t-\Delta_{l-1}},\bar{\xi}_{2}(Z^{l-1,M,i}_{t-\Delta_{l-1}}, \mu^{l-1,M}_{t-\Delta_{l-1}})\right)\right)\Delta B^{l-1,i}_{t}\\
             +\;& \left(a\left(Z^{l,M,i}_{t-\Delta_{l}},\bar{\xi}_{1}(Z^{l,M,i}_{t-\Delta_{l}}, \mu^{l,M}_{t-\Delta_{l}})\right) - a\left(Z^{l,M,i}_{t-\Delta_{l-1}},\bar{\xi}_{1}(Z^{l,M,i}_{t-\Delta_{l-1}}, \mu^{l,M}_{t-\Delta_{l-1}})\right)\right)\Delta_{l} \\
             +\;& \left(b\left(Z^{l,M,i}_{t-\Delta_{l}},\bar{\xi}_{2}(Z^{l,M,i}_{t-\Delta_{l}}, \mu^{l,M}_{t-\Delta_{l}})\right) - b\left(Z^{l,M,i}_{t-\Delta_{l-1}},\bar{\xi}_{2}(Z^{l,M,i}_{t-\Delta_{l-1}}, \mu^{l,M}_{t-\Delta_{l-1}})\right)\right)\Delta B^{l,i}_{t}.
        \end{split}
    \end{equation}
    Bounding each term following the same technique used to prove the first inequality we can show the following the inequality
        \begin{equation*}
        \begin{split}
            &\sup_i\mathbb{E}\left[\|Z^{l,M,i}_{t}  - Z^{l-1,M,i}_{t}\|^k\right]\\
            \leq\;& (1+C\Delta_L)\sup_i\mathbb{E}\left[\|Z^{l,M,i}_{t-\Delta_{l-1}}  - Z^{l-1,M,i}_{t-\Delta_{l-1}}\|^k\right]
            + C\Delta_{l-1}\left(\sup_i\mathbb{E}\left[\|Z^{l,M,i}_{t-\Delta_{l}}  - Z^{l,M,i}_{t-\Delta_{l-1}}\|^k\right] + \Delta_{l-1}^{k/2}\right).\\
        \end{split}
    \end{equation*}
    From the definition of $Z^{l,M,i}_t$ and the boundedness of $a$ and $b$ we have $\sup_i\mathbb{E}[\|Z^{l,M,i}_{t-\Delta_{l}}  - Z^{l,M,i}_{t-\Delta_{l-1}}\|^k]\leq C\Delta_l^{k/2}$. The proof then follows similarly to the proof of the first inequality by applying Lemma \ref{lm:technical_lm_1} to the sequence $\{\sup_i\mathbb{E}[\|Z^{l,M,i}_{s\Delta_{l-1}}  - Z^{l-1,M,i}_{s\Delta_{l-1}}\|^k]\}_{s=0}^{T/\Delta_{l-1}}$.
\end{proof}

\begin{lem}\label{lm:technical_lm_2}
    Let $f\in\mathcal{C}_b^2(\mathbb{R}^p,\mathbb{R}^q)$. There exists a constant $C<+\infty$ such that the following inequality holds for every $(x,y,z,w)\in(\mathbb{R}^p)^4$:
    \begin{equation*}
        \|f(x)-f(y)-f(z)+f(w)\|\leq C\|x-y-z+w\| + C\|z-w\|(\|z-x\|+\|w-y\|).
    \end{equation*}
\end{lem}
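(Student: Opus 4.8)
The plan is to write the mixed second difference $f(x)-f(y)-f(z)+f(w)$ as an integral of first derivatives of $f$ along two suitably chosen line segments, and then to estimate the resulting integrand using the $\mathcal{C}^2_b$ smoothness of $f$. Concretely, I would introduce the auxiliary map $R:[0,1]\to\mathbb{R}^q$ defined by $R(t)=f\big((1-t)x+ty\big)-f\big((1-t)z+tw\big)$. Since $R(0)=f(x)-f(z)$ and $R(1)=f(y)-f(w)$, the fundamental theorem of calculus (valid because $f\in\mathcal{C}^1$) yields
\begin{equation*}
f(x)-f(y)-f(z)+f(w)=-\big(R(1)-R(0)\big)=-\int_0^1 R'(t)\,dt .
\end{equation*}

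The key step is the algebraic rearrangement of $R'$. Writing $x_t:=(1-t)x+ty$ and $z_t:=(1-t)z+tw$, one has $R'(t)=\nabla f(x_t)(y-x)-\nabla f(z_t)(w-z)$ (with $\nabla f$ the Jacobian), so that $-R'(t)=\nabla f(x_t)(x-y)-\nabla f(z_t)(z-w)$. Substituting the elementary identity $x-y=(x-y-z+w)+(z-w)$ and abbreviating $\delta:=x-y-z+w$ splits this into a term carrying $\delta$ and a term in which the (possibly large) direction $z-w$ appears only through a gradient increment:
\begin{equation*}
-R'(t)=\nabla f(x_t)\,\delta+\big[\nabla f(x_t)-\nabla f(z_t)\big](z-w).
\end{equation*}
Integrating over $t\in[0,1]$ then gives the exact identity
\begin{equation*}
f(x)-f(y)-f(z)+f(w)=\int_0^1 \nabla f(x_t)\,\delta\,dt+\int_0^1\big[\nabla f(x_t)-\nabla f(z_t)\big](z-w)\,dt .
\end{equation*}

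It remains to bound the two integrals. As $f\in\mathcal{C}^2_b$, $\nabla f$ is bounded, so $\big\|\int_0^1 \nabla f(x_t)\,\delta\,dt\big\|\le C\|\delta\|=C\|x-y-z+w\|$; and $\nabla f$ is Lipschitz with constant $\sup\|\nabla^2 f\|$, hence $\|\nabla f(x_t)-\nabla f(z_t)\|\le C\|x_t-z_t\|=C\|(1-t)(x-z)+t(y-w)\|$, which after integration in $t$ gives $\int_0^1\|\nabla f(x_t)-\nabla f(z_t)\|\,dt\le \tfrac{C}{2}\big(\|x-z\|+\|y-w\|\big)$, so the second integral is at most $C\|z-w\|(\|z-x\|+\|w-y\|)$. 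Adding the two bounds and relabelling the constant gives the claim. The only genuinely delicate point — and the thing one has to get right — is the choice in the second step to interpolate along the pairs $[x,y]$ and $[z,w]$ (rather than along $[x,z]$ and $[y,w]$) and to isolate $\delta$ before estimating, so that the large direction $z-w$ is multiplied only by a gradient increment, itself of order $\|x_t-z_t\|$. A more naive splitting, e.g. treating $f(x)-f(z)$ and $f(y)-f(w)$ separately and bounding the difference of the two integrals directly, produces a spurious cross term of order $\|\delta\|\,\|z-x\|$ that cannot be absorbed into the desired right-hand side; the identity above never generates it.
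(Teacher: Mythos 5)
Your proof is correct: the identity $f(x)-f(y)-f(z)+f(w)=\int_0^1 \nabla f(x_t)\,\delta\,dt+\int_0^1[\nabla f(x_t)-\nabla f(z_t)](z-w)\,dt$ with $x_t=(1-t)x+ty$, $z_t=(1-t)z+tw$ is exact, and the two bounds (boundedness of $\nabla f$ for the first term, Lipschitz continuity of $\nabla f$ giving $\|x_t-z_t\|\leq(1-t)\|x-z\|+t\|y-w\|$ for the second) yield precisely the stated estimate. The paper states this lemma without proof, so there is nothing to compare against; your argument, including the correct choice to interpolate along the pairs $(x,y)$ and $(z,w)$ so that $z-w$ is multiplied only by a gradient increment, is the standard one the authors presumably intended.
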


\begin{lem}\label{lm:discrete_mv_rate_2_part_3}
    Assume \hyperref[assump:A1]{(A1)}. Let $k\in\mathbb{N}$. There exists a constant $C<+\infty$ such that for every $(l,M)\in\mathbb{N}\times\mathbb{N}$, $t\in\{0,\Delta_l,\dots,T\}$, and $i\in\{1,\dots,M\}$:
    \begin{equation*}
        \mathbb{E}\left[ \left\|Z^{l,M,i}_t-Z^{l-1,M,i}_t - Z^{l,i}_t+Z^{l-1,i}_t\right\|^k \right]\leq C\Delta_l^{k/2}/M^{k/2}.
    \end{equation*}
\end{lem}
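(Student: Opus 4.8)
The plan is to follow the blueprint of Lemma~\ref{lm:discrete_mv_rate_2_part_2}. Set $D_t^i:=Z^{l,M,i}_t-Z^{l-1,M,i}_t-Z^{l,i}_t+Z^{l-1,i}_t$ and aim to derive a discrete Gr\"onwall recursion for $u_t:=\sup_i\mathbb{E}[\|D_t^i\|^{2k}]$ along the coarse grid of spacing $\Delta_{l-1}$, then close it with Lemma~\ref{lm:technical_lm_1}. As in Lemma~\ref{lm:discrete_mv_rate_2_part_1} it suffices to treat even exponents: once $\mathbb{E}[\|D_t^i\|^{2k}]\le C(\Delta_l/M)^k$ is proven for every $k\in\mathbb{N}$, the stated bound for a general exponent $m$ follows from $\mathbb{E}[\|D_t^i\|^m]\le\mathbb{E}[\|D_t^i\|^{2m}]^{1/2}\le C(\Delta_l/M)^{m/2}$, and this square root is exactly what produces the rate $\Delta_l^{k/2}/M^{k/2}$. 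One auxiliary estimate is needed first, namely $\mathbb{E}[\|Z^{l,i}_t-Z^{l-1,i}_t\|^k]\le C\Delta_l^{k/2}$: this follows by letting $M\to\infty$ in $\|Z^{l,i}_t-Z^{l-1,i}_t\|\le\|Z^{l,M,i}_t-Z^{l-1,M,i}_t\|+\|Z^{l,M,i}_t-Z^{l,i}_t\|+\|Z^{l-1,M,i}_t-Z^{l-1,i}_t\|$ and using the two estimates of Lemma~\ref{lm:discrete_mv_rate_2_part_2}, the left side being independent of $M$. Throughout, ``of size $\rho$'' means ``with $p$-th moment $O(\rho^p)$ for every $p$''.

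Over one coarse step I substitute the two fine Euler sub-steps for the level-$l$ processes $Z^{l,M,i},Z^{l,i}$ and the single Euler step for the level-$(l-1)$ processes $Z^{l-1,M,i},Z^{l-1,i}$, using the synchronous-coupling identity $\Delta B^{l-1,i}_t=\Delta B^{l,i}_{t-\Delta_l}+\Delta B^{l,i}_t$, exactly as in \eqref{eq:L_L+1_decomposition}. Subtracting the two level-differences gives $D_t^i=D_{t-\Delta_{l-1}}^i+R_t^i$, where $R_t^i$ collects: (i) a second difference of $a$ among the $(l,M),(l-1,M),(l,\infty),(l-1,\infty)$ states at time $t-\Delta_{l-1}$, multiplied by $\Delta_{l-1}$; (ii) the corresponding second difference of $b$, multiplied by $\Delta B^{l-1,i}_t$; and (iii) two ``fine-grid correction'' second differences comparing the level-$l$ coefficients at times $t-\Delta_l$ and $t-\Delta_{l-1}$, multiplied by $\Delta_l$ and by $\Delta B^{l,i}_t$. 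Each second difference of $a$ or $b$ is estimated by Lemma~\ref{lm:technical_lm_2}: its leading part is the second difference of the arguments, i.e.\ $\|D_{t-\Delta_{l-1}}^i\|$ (for (i)--(ii)) or an increment of $Z^{l,M,i}-Z^{l,i}$ over one fine step of size $\Delta_l^{1/2}M^{-1/2}$ (for (iii)), plus in each case the corresponding second difference of $\overline{\xi}_j$; its remainder is the product of a \emph{level} factor of size $\Delta_l^{1/2}$ (auxiliary estimate and Lipschitz property of $\xi_j$) and a \emph{particle} factor of size $M^{-1/2}$ (Lemma~\ref{lm:discrete_mv_rate_2_part_2} and the $\overline{\xi}_j$-analogue of \eqref{eq:a_bound_1}), hence of size $\Delta_l^{1/2}M^{-1/2}$ by H\"older. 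Since the pieces of $R_t^i$ carrying a fresh Brownian increment $\Delta B^{l,i}_{t-\Delta_l}$ or $\Delta B^{l,i}_t$ have zero conditional mean given the filtration generated by the driving Brownian motions up to $t-\Delta_{l-1}$, expanding $\|D_{t-\Delta_{l-1}}^i+R_t^i\|^{2k}$ by Lemma~\ref{lm:technical_lm_3} and taking expectations --- so that the cross term survives only through the $O(\Delta_{l-1})$ drift part of $R_t^i$, treated by H\"older and Young --- yields
$$u_t\le(1+C\Delta_{l-1})\,u_{t-\Delta_{l-1}}+C\Delta_{l-1}\,(\Delta_l/M)^k .$$
Lemma~\ref{lm:technical_lm_1} with $D_0^i=0$ then gives $u_t\le C(\Delta_l/M)^k$, finishing the even case.

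The genuinely delicate ingredient, hidden in the above, is the \emph{mixed} second difference of the McKean--Vlasov interaction, $\overline{\xi}_j(Z^{l,M,i},\mu^{l,M})-\overline{\xi}_j(Z^{l-1,M,i},\mu^{l-1,M})-\overline{\xi}_j(Z^{l,i},\mu^l)+\overline{\xi}_j(Z^{l-1,i},\mu^{l-1})$, which must be shown to be of size $\Delta_l^{1/2}M^{-1/2}$ up to the recursive term $\sup_j\|D^j_{t-\Delta_{l-1}}\|$. I would introduce the empirical measures $\widehat{\mu}^s:=\tfrac1M\sum_j\delta_{Z^{s,j}}$ of the i.i.d.\ true-law particles and split. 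Replacing $\mu^s$ by $\widehat{\mu}^s$ turns the expression into $\tfrac1M\sum_j$ of a termwise second difference of $\xi_j$, each bounded via Lemma~\ref{lm:technical_lm_2} by $\|D^i\|+\|D^j\|$ plus an $O(\Delta_l^{1/2}M^{-1/2})$ remainder (the diagonal summand $j=i$ contributing only an extra factor $M^{-1}$), and $\tfrac1M\sum_j\|D^j\|$ is absorbed into $\sup_j\|D^j\|$ by Jensen. The remaining replacement error is the \emph{difference} of two centered empirical averages, at levels $l$ and $l-1$, whose $j$-th summand is $\xi_j(Z^{l-1,i},Z^{l-1,j})-\xi_j(Z^{l,i},Z^{l,j})$ minus its conditional mean; this summand has size $\Delta_l^{1/2}$ (Lipschitzness and the auxiliary estimate), so a Marcinkiewicz--Zygmund inequality applied conditionally on $\sigma(Z^{l,i},Z^{l-1,i})$ --- using that $\{(Z^{l,j},Z^{l-1,j})\}_{j\ne i}$ are i.i.d.\ and independent of that $\sigma$-algebra --- gives the bound $O(\Delta_l^{1/2}M^{-1/2})$ rather than the bare $O(M^{-1/2})$. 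This is the main obstacle: one must keep the discretization scale $\Delta_l^{1/2}$ and the Monte Carlo scale $M^{-1/2}$ separate and recognize that the statistical fluctuation enters only through a difference of centered sums with $O(\Delta_l^{1/2})$ summand increments; everything else is the bookkeeping already performed in Lemma~\ref{lm:discrete_mv_rate_2_part_2}.
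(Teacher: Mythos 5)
Your proposal is correct and follows essentially the same route as the paper's proof: the same coarse-step decomposition \eqref{eq:Z_L_M_difference}, the same use of Lemma \ref{lm:technical_lm_2} on the second differences of $a$, $b$ and of the interaction term (splitting each into a recursive leading part plus a product remainder of a level factor $O(\Delta_l^{1/2})$ and a particle factor $O(M^{-1/2})$ combined by Cauchy--Schwarz), the same Fubini/Marcinkiewicz--Zygmund treatment of the difference of centered empirical sums with $O(\Delta_l^{1/2})$ summands, and the same discrete Gr\"onwall closure via Lemma \ref{lm:technical_lm_1}. Your explicit derivation of the auxiliary bound $\mathbb{E}[\|Z^{l,i}_t-Z^{l-1,i}_t\|^k]\leq C\Delta_l^{k/2}$ and your identification of the martingale structure that removes the cross terms merely spell out steps the paper leaves implicit.
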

\begin{proof}
Assume $k$ is even. The inequality is proven following the same strategy as the proof of Lemma \ref{lm:discrete_mv_rate_2_part_2}, we write following \eqref{eq:Z_M_difference} and \eqref{eq:L_L+1_decomposition}:
    \begin{equation}\label{eq:Z_L_M_difference}
    	\begin{split}
    		 &Z^{l,M,i}_{t}  - Z^{l-1,M,i}_{t} -  Z^{l,i}_{t}  + Z^{l-1,i}_{t}\\
    		=\;& Z^{l,M,i}_{t-\Delta_{l-1}}  - Z^{l-1,M,i}_{t-\Delta_{l-1}} - Z^{l,i}_{t-\Delta_{l-1}}  + Z^{l-1,i}_{t-\Delta_{l-1}}\\
    		+\;& \bigg(a\left(Z^{l,M,i}_{t-\Delta_{l-1}},\bar{\xi}_{1}(Z^{l,M,i}_{t-\Delta_{l-1}}, \mu^{l,M}_{t-\Delta_{l-1}})\right) - a\left(Z^{l-1,M,i}_{t-\Delta_{l-1}},\bar{\xi}_{1}(Z^{l-1,M,i}_{t-\Delta_{l-1}}, \mu^{l-1,M}_{t-\Delta_{l-1}})\right)\\
    		-\;& a\left(Z^{l,i}_{t-\Delta_{l-1}},\bar{\xi}_{1}(Z^{l,i}_{t-\Delta_{l-1}}, \mu^{l}_{t-\Delta_{l-1}})\right) + a\left(Z^{l-1,i}_{t-\Delta_{l-1}},\bar{\xi}_{1}(Z^{l-1,i}_{t-\Delta_{l-1}}, \mu^{l-1}_{t-\Delta_{l-1}})\right)\bigg)\Delta_{l-1} \\
    		+\;& \cdots
    	\end{split}	
    \end{equation}
where we wrote $\cdots$ for brevity to indicate that the rest of terms are similar to first term in \eqref{eq:Z_L_M_difference} and follows the same general form as \eqref{eq:Z_M_difference} and \eqref{eq:L_L+1_decomposition}. We will show how to bound the first term in \eqref{eq:Z_L_M_difference} since the rest of the terms, which were indicated by $\cdots$, are treated similarly. define the vector $R^{l,M} = (Z^{l,M,i}_{t-\Delta_{l-1}},\bar{\xi}_{1}(Z^{l,M,i}_{t-\Delta_{l-1}}, \mu^{l,M}_{t-\Delta_{l-1}}))$ and write $a(R^{l,M})$ for $a\left(Z^{l,M,i}_{t-\Delta_{l-1}},\bar{\xi}_{1}(Z^{l,M,i}_{t-\Delta_{l-1}}, \mu^{l,M}_{t-\Delta_{l-1}})\right)$, similarly define $R^{l-1,M}$, $R^{l}$, and $R^{l-1}$. Applying Lemma \ref{lm:technical_lm_2} to the function $a$ we have
    \begin{equation}\label{eq:a_Z_eq}
        \begin{split}
            \|a(R^{l,M}) - a(R^{l-1,M}) - a(R^{l}) + a(R^{l-1})\|
            \leq\;&C\|R^{l,M} - R^{l-1,M}\|(\|R^{l,M} - R^{l}\|+\|R^{l-1,M} - R^{l-1}\|) \\
            +\;& C\|R^{l,M} - R^{l-1,M} - R^{l} + R^{l-1}\|,
        \end{split}
    \end{equation}
    Using the assumption that $\xi_1$ is Liptchitz, Lemma \ref{lm:discrete_mv_rate_2_part_2}, and Marcinkiewicz-Zygmund in a manner similar to \eqref{eq:MZ_1} we have $\mathbb{E}[\|R^{l,M}-R^{l-1,M}\|^{2k}]\leq  C\Delta_l^{k}$ and $\mathbb{E}[\|R^{l,M}-R^{l}\|^{2k}]\leq C\frac{1}{M^{k}}$,
    thus using Cauchy-Schwarz and the $C^p$ inequality we have
    \begin{equation}\label{eq:R_1}
        \begin{split}
            \;&\mathbb{E}[\|R^{l,M} - R^{l-1,M}\|^k(\|R^{l,M} - R^{l}\|+\|R^{l-1,M} - R^{l-1}\|)^k]\\
            \leq\;&C\left(\mathbb{E}[\|R^{l,M} - R^{l,M}\|^{2k}](\mathbb{E}[\|R^{l,M} - R^{l}\|^{2k}]+\mathbb{E}[\|R^{l-1,M} - R^{l-1}\|^{2k}])\right)^{1/2}\\
            \leq\;& C\Delta_l^{k/2}/M^{k/2}.
        \end{split}
    \end{equation}
    Similarly to \eqref{eq:a_Z_2} the term $\|R^{l,M} - R^{l-1,M} - R^{l} + R^{l-1}\|$ in \eqref{eq:a_Z_eq} is bounded by
    \begin{equation}\label{eq:R_2}
        \begin{split}
            &\,\mathbb{E}[\|R^{l,M} - R^{l-1,M} - R^{l} + R^{l-1}\|^k]\\
            \leq\,& C\mathbb{E}[\|Z_{t-\Delta_{l-1}}^{l,M,i} - Z_{t-\Delta_{l-1}}^{l-1,M,i} - Z_{t-\Delta_{l-1}}^{l,i} + Z_{t-\Delta_{l-1}}^{l-1,i}\|^k]\\
            +\,& C\mathbb{E}\bigg[\bigg|\frac{1}{M}\sum_{j=1}^M\Big(\xi_{1}(Z^{l,M,i}_{t-\Delta_{l-1}}, Z^{l,M,j}_{t-\Delta_{l-1}}) - \xi_{1}(Z^{l-1,M,i}_{t-\Delta_{l-1}}, Z^{l-1,M,j}_{t-\Delta_{l-1}})
            -\xi_{1}(Z^{l,i}_{t-\Delta_{l-1}}, Z^{l,j}_{t-\Delta_{l-1}}) \\
            +\,& \xi_{1}(Z^{l,i}_{t-\Delta_{l-1}}, Z^{l,j}_{t-\Delta_{l-1}})\Big)\bigg|^k\bigg]
            + C\mathbb{E}\bigg[\bigg|\frac{1}{M}\sum_{j\not=i}\Big(\xi_{1}(Z^{l,i}_{t-\Delta_{l-1}}, Z^{l,j}_{t-\Delta_{l-1}}) - \xi_{1}(Z^{l-1,i}_{t-\Delta_{l-1}}, Z^{l-1,j}_{t-\Delta_{l-1}})\\
            -\,&\int \xi_{1}(Z^{l,i}_{t-\Delta_{l-1}}, y)d\mu^{l}_{t-\Delta_{l-1}}(y) + \int \xi_{1}(Z^{l-1,i}_{t-\Delta_{l-1}}, y)d\mu^{l-1}_{t-\Delta_{l-1}}(y) \Big)\bigg|^k\bigg]
            + C\frac{1}{M^k}\mathbb{E}\bigg[\bigg|\xi_{1}(Z^{l,i}_{t-\Delta_{l-1}}, Z^{l,i}_{t-\Delta_{l-1}})\\ -\,& \xi_{1}(Z^{l-1,i}_{t-\Delta_{l-1}}, Z^{l-1,i}_{t-\Delta_{l-1}})
            -\int \xi_{1}(Z^{l,i}_{t-\Delta_{l-1}}, y)d\mu^{l}_{t-\Delta_{l-1}}(y) + \int \xi_{1}(Z^{l-1,i}_{t-\Delta_{l-1}}, y)d\mu^{l-1}_{t-\Delta_{l-1}}(y)\bigg|^k \bigg].
        \end{split}
    \end{equation}
    Using Lemmata \ref{lm:technical_lm_2} and then \ref{lm:discrete_mv_rate_2_part_2} and with Cauchy-Schwarz similarly to \ref{eq:R_1}, the second expectation after the inequality in \eqref{eq:R_2} can be bounded from above by $C\sup_i \mathbb{E}[\|Z_{t-\Delta_{l-1}}^{l,M,i} - Z_{t-\Delta_{l-1}}^{l-1,M,i} - Z_{t-\Delta_{l-1}}^{l,i} + Z_{t-\Delta_{l-1}}^{l-1,i}\|^k] + C\Delta_L^{k/2}/M^{k/2}$. For the third expectations we use Fubini and Marcinkiewicz-Zygmund inequality in a manner similar to \eqref{eq:MZ_1}, $\xi_1$ is Lipchitz, and Lemma \ref{lm:discrete_mv_rate_2_part_2}:
    \begin{eqnarray*}
        \begin{split}
            \,&\mathbb{E}\bigg[\bigg|\sum_{j\not=i}\Big(\xi_{1}(Z^{l,i}_{t-\Delta_{l-1}}, Z^{l,j}_{t-\Delta_{l-1}}) - \xi_{1}(Z^{l-1,i}_{t-\Delta_{l-1}}, Z^{l-1,j}_{t-\Delta_{l-1}})
            -\int \xi_{1}(Z^{l,i}_{t-\Delta_{l-1}}, y)d\mu^{l}_{t-\Delta_{l-1}}(y)\\ +\,& \int \xi_{1}(Z^{l-1,i}_{t-\Delta_{l-1}}, y)d\mu^{l-1}_{t-\Delta_{l-1}}(y) \Big)\bigg|^k\bigg]\\
            \leq\,& C\mathbb{E}\bigg[\bigg|\sum_{j\not=i}\Big(\xi_{1}(Z^{l,i}_{t-\Delta_{l-1}}, Z^{l,j}_{t-\Delta_{l-1}}) - \xi_{1}(Z^{l-1,i}_{t-\Delta_{l-1}}, Z^{l-1,j}_{t-\Delta_{l-1}})
            -\mathbb{E}[\xi_{1}(Z^{l,i}_{t-\Delta_{l-1}}, Z^{l,j}_{t-\Delta_{l-1}})]\\ +\,& \mathbb{E}[\xi_{1}(Z^{l-1,i}_{t-\Delta_{l-1}}, Z^{l-1,j}_{t-\Delta_{l-1}})] \Big)^2\bigg|^{k/2}\bigg]\\
            \leq\,& CM^{k/2}\sup_i \mathbb{E}[\|Z^{l,i}_{t-\Delta_{l-1}}-Z^{l,i}_{t-\Delta_{l-1}}\|^k]\\
            \leq\,& CM^{k/2}\Delta_{l}^{k/2}.
        \end{split}
    \end{eqnarray*}
    Using $\xi_1$ is Lipchitz and Lemma \ref{lm:discrete_mv_rate_2_part_2} the last expectation in \eqref{eq:R_2} is bounded by $C\Delta_l^{k/2}/M^k$. Therefore we have shown
    \begin{eqnarray*}
        \begin{split}
    		\;& \mathbb{E}\bigg[\bigg\|a\left(Z^{l,M,i}_{t-\Delta_{l-1}},\bar{\xi}_{1}(Z^{l,M,i}_{t-\Delta_{l-1}}, \mu^{l,M}_{t-\Delta_{l-1}})\right) - a\left(Z^{l-1,M,i}_{t-\Delta_{l-1}},\bar{\xi}_{1}(Z^{l-1,M,i}_{t-\Delta_{l-1}}, \mu^{l-1,M}_{t-\Delta_{l-1}})\right)\\
    		-\;& a\left(Z^{l,i}_{t-\Delta_{l-1}},\bar{\xi}_{1}(Z^{l,i}_{t-\Delta_{l-1}}, \mu^{l}_{t-\Delta_{l-1}})\right) + a\left(Z^{l-1,i}_{t-\Delta_{l-1}},\bar{\xi}_{1}(Z^{l-1,i}_{t-\Delta_{l-1}}, \mu^{l-1}_{t-\Delta_{l-1}})\right)\bigg\|^k\bigg]\\
      \leq\;& C\sup_i\mathbb{E}\left[\|Z^{l,M,i}_{t-\Delta_{l-1}}  - Z^{l-1,M,i}_{t-\Delta_{l-1}} -  Z^{l}_{t-\Delta_{l-1}}  + Z^{l-1}_{t-\Delta_{l-1}}\|^k\right]
            + C\frac{\Delta_l^{k/2}}{M^{k/2}}.
        \end{split}
    \end{eqnarray*}
    A similar bound holds for the rest of terms in \eqref{eq:Z_L_M_difference}. Applying Lemma \ref{lm:technical_lm_3} several times we have
    \begin{equation*}
        \begin{split}
            \;&\sup_i\mathbb{E}\left[\|Z^{l,M,i}_{t}  - Z^{l-1,M,i}_{t} -  Z^{l}_{t}  + Z^{l-1}_{t}\|^k\right]\\\leq\;& (1+C\Delta_{l})\sup_i\mathbb{E}\left[\|Z^{l,M,i}_{t-\Delta_{l-1}}  - Z^{l-1,M,i}_{t-\Delta_{l-1}} -  Z^{l}_{t-\Delta_{l-1}}  + Z^{l-1}_{t-\Delta_{l-1}}\|^k\right]
            + C\frac{\Delta_l^{k/2}}{M^{k/2}}.
        \end{split}
    \end{equation*}
    Finally applying Lemma \ref{lm:technical_lm_1} to the sequence $\left\{\sup_i\mathbb{E}\left[\|Z^{l,M,i}_{s\Delta_{l-1}}  - Z^{l-1,M,i}_{s\Delta_{l-1}} -  Z^{l,i}_{s\Delta_{l-1}}  + Z^{l-1,i}_{s\Delta_{l-1}}\|^k\right]\right\}_{s=0}^{T\Delta_{l-1}^{-1}}$ finishes the proof.    
\end{proof}
\subsection{Bounds on $\tilde{Z}^{l,M}_t$ and $\pi^{l,M,N}_t$ increments}
\label{appen:pi_increments_bounds}
\begin{lem}\label{lm:Z_rate}
Assume \hyperref[assump:A1]{(A1)}. Let $k\in\mathbb{N}$. There exists a constant $C<+\infty$ such that for every $(l,M)\in\mathbb{N}\times\mathbb{N}$ and $t\in\{0,\Delta_l,\dots,T\}$ the following inequalities hold
    \begin{equation*}
        \max\{\mathbb{E}[\|\tilde{Z}^{l,M}_t\|^k], \mathbb{E}[\|\tilde{Z}^{l}_t\|^k]\}\leq C,
    \end{equation*}
	\begin{equation*}
        \mathbb{E}\left[\|\tilde{Z}^{l,M}_t - \tilde{Z}^{l}_t\|^k\right]\leq C\frac{1}{M^{k/2}},
	\end{equation*} 
    \begin{equation*}
        \mathbb{E}\left[\|\tilde{Z}^{l,M}_{t} - \tilde{Z}^{l-1,M}_{t}\|^k\right]\leq C\Delta_L^{k/2} ,
    \end{equation*}
	\begin{equation*}
        \mathbb{E}\left[\|\tilde{Z}^{l,M}_{t} - \tilde{Z}^{l-1,M}_{t} - \tilde{Z}^{l}_{t} + \tilde{Z}^{l-1}_{t}\|^k\right]\leq C\frac{\Delta_l^{k/2}}{M^{k/2}}.
	\end{equation*}
\end{lem}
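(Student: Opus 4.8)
The plan is to prove the four bounds in the order listed, each relying on the previous ones, following closely the arguments of Lemmata \ref{lm:discrete_mv_rate_2_part_1}--\ref{lm:discrete_mv_rate_2_part_3}. The one genuinely new feature is that $\tilde Z^{s,M}_t$ and $\tilde Z^s_t$ are driven by $W$, which is independent of the $\{B^i\}$, while the measure in their coefficients is the empirical measure $\mu^{s,M}_t=\frac1M\sum_i\delta_{Z^{s,M,i}_t}$ for $\tilde Z^{s,M}$ and the deterministic law $\mu^s_t$ of $Z^{s,1}_t$ for $\tilde Z^s$; in particular $\tilde Z^s_t$ is $\sigma(W)$-measurable and hence independent of the auxiliary families $\{Z^{l,i}_t\}_i$ and $\{Z^{l-1,i}_t\}_i$, which is exactly what yields $O(M^{-1/2})$ rates from the law approximations. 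For the first bound I would expand one Euler step of \eqref{eq:Z_tilde_1}, apply Lemma \ref{lm:technical_lm_3} twice, and use the boundedness of $a,b$ from \hyperref[assump:A1]{(A1)} with $\mathbb{E}[\Delta W^s_t]=0$ and $\mathbb{E}[\|\Delta W^s_t\|^k]\le C\Delta_s^{k/2}$ exactly as in \eqref{eq:expand_1}, obtaining $\mathbb{E}[\|\tilde Z^{s,M}_t\|^k]\le(1+C\Delta_s)\mathbb{E}[\|\tilde Z^{s,M}_{t-\Delta_s}\|^k]+C\Delta_s$ and similarly for $\tilde Z^s_t$, after which Lemma \ref{lm:technical_lm_1} closes it.

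For the second bound I would subtract the recursions for $\tilde Z^{l,M}_t$ and $\tilde Z^l_t$: since $a,b,\xi_1,\xi_2$ are bounded and Lipschitz, each coefficient increment is controlled by $\|\tilde Z^{l,M}_{t-\Delta_l}-\tilde Z^l_{t-\Delta_l}\|$ plus $|\bar\xi_m(\tilde Z^l_{t-\Delta_l},\mu^{l,M}_{t-\Delta_l})-\bar\xi_m(\tilde Z^l_{t-\Delta_l},\mu^l_{t-\Delta_l})|$, and the latter I would split, inserting $\frac1M\sum_j\xi_m(\tilde Z^l_{t-\Delta_l},Z^{l,j}_{t-\Delta_l})$, into a term $\le C\sup_j\|Z^{l,M,j}_{t-\Delta_l}-Z^{l,j}_{t-\Delta_l}\|$ with $k$-th moment $O(M^{-k/2})$ by Lemma \ref{lm:discrete_mv_rate_2_part_2}, and a centred average of i.i.d.\ bounded terms handled by Marcinkiewicz--Zygmund conditionally on $\tilde Z^l_{t-\Delta_l}$ exactly as in \eqref{eq:MZ_1}; this gives $\mathbb{E}[\|\tilde Z^{l,M}_t-\tilde Z^l_t\|^k]\le(1+C\Delta_l)\mathbb{E}[\|\tilde Z^{l,M}_{t-\Delta_l}-\tilde Z^l_{t-\Delta_l}\|^k]+C\Delta_l M^{-k/2}$, and Lemma \ref{lm:technical_lm_1} with zero initial value finishes it. For the third bound I would, over each coarse interval $[t-\Delta_{l-1},t]$, compare the single coarse Euler step against the two fine steps (whose increments sum to $\Delta W^{l-1}$), decompose as in \eqref{eq:L_L+1_decomposition}, bound the coefficient difference at the coarse node by $\|\tilde Z^{l,M}_{t-\Delta_{l-1}}-\tilde Z^{l-1,M}_{t-\Delta_{l-1}}\|$ and $\sup_j\|Z^{l,M,j}_{t-\Delta_{l-1}}-Z^{l-1,M,j}_{t-\Delta_{l-1}}\|$ (the latter $O(\Delta_l^{1/2})$ in $\mathbb{L}_k$ by Lemma \ref{lm:discrete_mv_rate_2_part_2}), and bound the fine-step variation terms by $O(\Delta_l^{1/2})$ using boundedness of $a,b$; Lemma \ref{lm:technical_lm_1} then yields $O(\Delta_l^{k/2})$. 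Dropping all $M$-dependent terms, the same argument also gives $\mathbb{E}[\|\tilde Z^l_t-\tilde Z^{l-1}_t\|^k]\le C\Delta_l^{k/2}$, which is needed below.

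For the fourth bound I would form the fourfold difference, expand recursively over coarse intervals, and treat each coefficient contribution, e.g.\ $a(R^{l,M})-a(R^{l-1,M})-a(R^l)+a(R^{l-1})$ with $R^{l,M}=(\tilde Z^{l,M}_{t-\Delta_{l-1}},\bar\xi_1(\tilde Z^{l,M}_{t-\Delta_{l-1}},\mu^{l,M}_{t-\Delta_{l-1}}))$ etc., via Lemma \ref{lm:technical_lm_2}. The cross term $\|R^{l,M}-R^{l-1,M}\|(\|R^{l,M}-R^l\|+\|R^{l-1,M}-R^{l-1}\|)$ pairs a level difference, which is $O(\Delta_l^{1/2})$ by the third bound and Lemma \ref{lm:discrete_mv_rate_2_part_2}, with a law-approximation error, which is $O(M^{-1/2})$ by the second bound and Lemma \ref{lm:discrete_mv_rate_2_part_2}, so Cauchy--Schwarz as in \eqref{eq:R_1} gives $O(\Delta_l^{k/2}M^{-k/2})$ for its $k$-th moment; the main term $\|R^{l,M}-R^{l-1,M}-R^l+R^{l-1}\|$ I would split, inserting $\frac1M\sum_j\xi_1(\tilde Z^l_{t-\Delta_{l-1}},Z^{l,j}_{t-\Delta_{l-1}})$ and $\frac1M\sum_j\xi_1(\tilde Z^{l-1}_{t-\Delta_{l-1}},Z^{l-1,j}_{t-\Delta_{l-1}})$, into a state fourfold difference that feeds the recursion, a fourfold difference of $\xi_1$ in both of its arguments handled by a second use of Lemma \ref{lm:technical_lm_2} together with Lemma \ref{lm:discrete_mv_rate_2_part_3}, and a centred i.i.d.\ average whose summands have $\mathbb{L}_k$-norm $O(\Delta_l^{1/2})$ (by the Lipschitz bound, the third bound, and Lemma \ref{lm:discrete_mv_rate_2_part_2}) treated by Marcinkiewicz--Zygmund conditionally on $(\tilde Z^l_{t-\Delta_{l-1}},\tilde Z^{l-1}_{t-\Delta_{l-1}})$, giving $O(\Delta_l^{k/2}M^{-k/2})$; a fine-step variation term of the same size is picked up as in the third bound. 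Assembling, $\mathbb{E}[\|\tilde Z^{l,M}_t-\tilde Z^{l-1,M}_t-\tilde Z^l_t+\tilde Z^{l-1}_t\|^k]\le(1+C\Delta_{l-1})(\text{the same quantity at }t-\Delta_{l-1})+C\Delta_{l-1}\Delta_l^{k/2}M^{-k/2}$, and Lemma \ref{lm:technical_lm_1} with zero initial value gives the claim. I expect the fourth bound to be the main obstacle: the difficulty is the simultaneous book-keeping of the two small parameters, i.e.\ verifying that every term not absorbed into the Gronwall recursion carries the full product $\Delta_l^{k/2}M^{-k/2}$ rather than merely one of the two factors, and in particular that the conditional-independence argument producing the $M^{-1/2}$ gain in the centred averages remains available inside the fourfold difference.
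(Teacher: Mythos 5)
Your proposal is correct and follows essentially the same route as the paper: the paper's proof likewise subtracts the Euler recursions, splits each coefficient increment into a Lipschitz part (controlled via Lemma \ref{lm:discrete_mv_rate_2_part_2}) plus a centred empirical average handled by Fubini and Marcinkiewicz--Zygmund, and closes with the discrete Gronwall Lemma \ref{lm:technical_lm_1}; the paper only writes out the second inequality explicitly and defers the other three to the techniques of Lemmata \ref{lm:discrete_mv_rate_2_part_1}--\ref{lm:discrete_mv_rate_2_part_3}, exactly as you sketch. Your additional observation that the independence of $W$ from the $\{B^i\}$ is what licenses the conditional Marcinkiewicz--Zygmund step matches the paper's setup.
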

\begin{proof}
    We assume $k$ is even. We will prove the second inequality only since the same technique we will use to prove it and the techniques used for proving Lemmata \ref{lm:discrete_mv_rate_2_part_2} and \ref{lm:discrete_mv_rate_2_part_3} can directly be applied to prove the other inequalities in this lemma statement. We write
	\begin{equation*}
		\begin{split}
			\tilde{Z}^{l,M}_t - \tilde{Z}^{l}_t =\;& \tilde{Z}^{l,M}_{t-\Delta_l} - \tilde{Z}^{l}_{t-\Delta_l} \\+\;& (a(\tilde{Z}^{l,M}_{t-\Delta_l},\bar{\xi}_1(\tilde{Z}^{l,M}_{t-\Delta_l},\mu^{l,M}_{t-\Delta_l}))-a(\tilde{Z}^{l}_{t-\Delta_l},\bar{\xi}_1(\tilde{Z}^{l}_{t-\Delta_l},\mu^{l}_{t-\Delta_l})))\Delta_l \\ +\; & (b(\tilde{Z}^{l,M}_{t-\Delta_l},\bar{\xi}_2(\tilde{Z}^{l,M}_{t-\Delta_l},\mu^{l,M}_{t-\Delta_l}))-b(\tilde{Z}^{l}_{t-\Delta_l},\bar{\xi}_2(\tilde{Z}^{l}_{t-\Delta_l},\mu^{l}_{t-\Delta_l})))\Delta W^l_t.
		\end{split}
	\end{equation*}
    The functions $a$ and $\xi_1$ are Lipschitz from Assumption \hyperref[assump:A1]{(A1)}, hence for a constant $C$ we have
\begin{equation}\label{eq:bound_2}
	\begin{split}
		\;&\mathbb{E}[\|a(\tilde{Z}^{l,M}_{t-\Delta_l},\bar{\xi}_1(\tilde{Z}^{l,M}_{t-\Delta_l},\mu^{l,M}_{t-\Delta_l}))-a(\tilde{Z}^{l}_{t-\Delta_l},\bar{\xi}_1(\tilde{Z}^{l}_{t-\Delta_l},\mu^{l}_{t-\Delta_l}))\|^k]\\
		\leq \;& C\bigg(\mathbb{E}[\|\tilde{Z}^{l,M}_{t-\Delta_l} - \tilde{Z}^{l}_{t-\Delta_l}\|^k] + \mathbb{E}\left[\left|\frac{1}{M}\sum_{i=1}^M\xi_1(\tilde{Z}^{l,M}_{t-\Delta_l},Z^{l,M,i}_{t-\Delta_l})-\frac{1}{M}\sum_{i=1}^M\xi_1(\tilde{Z}^{l}_{t-\Delta_l},Z^{l,i}_{t-\Delta_l})\right|^k\right]\\ 
        +\;& \mathbb{E}\left[\left|\frac{1}{M}\sum_{i=1}^M\xi_1(\tilde{Z}^{l}_{t-\Delta_l},Z^{l,i}_{t-\Delta_l})-\int \xi_1(\tilde{Z}^{l}_{t-\Delta_l},y)d\mu^l_{t-\Delta_l}(y)\right|^k\right].\\
  \end{split}
\end{equation}
The second expectation after the inequality in \eqref{eq:bound_2} is bounded by $C(\sup_i\mathbb{E}[\|\tilde{Z}^{l,M}_{t-\Delta_l} - \tilde{Z}^{l}_{t-\Delta_l}\|^k] + \mathbb{E}[\|Z^{l,M,i}_{t-\Delta_l} - Z^{l,i}_{t-\Delta_l}\|^k])$
and from Lemma \ref{lm:discrete_mv_rate_2_part_2} we have $\sup_i \mathbb{E}[\|Z^{l,M,i}_{t-\Delta_l} - Z^{l,i}_{t-\Delta_l}\|]\leq C/M^{k/2}$. The third expectation after the inequality in \eqref{eq:bound_2} is bounded above by $C/M^{k/2}$ using Fubini and Marcinkiewicz-Zygmund inequality in a manner similar to \eqref{eq:MZ_1}. Therefore we have
\begin{equation*}
	\begin{split}
		\;&\mathbb{E}\left[\|a(\tilde{Z}^{l,M}_{t-\Delta_l},\bar{\xi}_1(\tilde{Z}^{l,M}_{t-\Delta_l},\mu^{l,M}_{t-\Delta_l}))-a(\tilde{Z}^{l}_{t-\Delta_l},\bar{\xi}_1(\tilde{Z}^{l}_{t-\Delta_l},\mu^{l}_{t-\Delta_l}))\|^k\right]
		\leq  C\left(\mathbb{E}\left[\|\tilde{Z}^{l,M}_{t-\Delta_l} - \tilde{Z}^{l}_{t-\Delta_l}\|^k\right] + \frac{1}{M^{k/2}}\right).
	\end{split}
\end{equation*}
The rest of the proof is identical to the proof of Lemma \ref{lm:discrete_mv_rate_2_part_2}.
\end{proof}

\begin{lem}\label{lm:eta_ml}
	Assume \hyperref[assump:A1]{(A1-2)}. Let $\varphi\in\mathcal{C}^2_b(\mathbb{R}^d)\cap\mathcal{B}_b(\mathbb{R}^d)$, $t\in\{0,1,2,\dots,T\}$, and $k\in\mathbb{N}$. There exists $C<+\infty$ such that for every $(l,M)\in\mathbb{N}\times \mathbb{N}$:
 	\begin{equation*}
		\mathbb{E}\left[\left|(\gamma_{t}^{l,M} - \gamma_{t}^{l-1,M})(\varphi)\right|^k\right] \leq C \Delta_l^{k/2}|\varphi|_{1}^{k},\qquad \mathbb{E}\left[\left|(\gamma_{t}^{l,M} - \gamma_{t}^{l})(\varphi)\right|^k\right] \leq C \frac{1}{M^{k/2}}|\varphi|_{1}^{k},
	\end{equation*}

	\begin{equation*}
		\mathbb{E}\left[\left|(\gamma_{t}^{l,M} - \gamma_{t}^{l-1,M} - \gamma_{t}^{l} + \gamma_{t}^{l-1})(\varphi)\right|^k\right] \leq C \frac{\Delta_l^{k/2}}{M^{k/2}}|\varphi|_{2}^{k}.
	\end{equation*}
\end{lem}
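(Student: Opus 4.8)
The plan is to reduce everything to the increment moment bounds for the auxiliary processes $\tilde Z^{l,M},\tilde Z^{l-1,M},\tilde Z^{l},\tilde Z^{l-1}$ established in Lemma~\ref{lm:Z_rate}. First I would observe that, conditionally on $\sigma(\mathcal{F}^{l-1,M}_T,\mathcal{F}^{l,M}_T)$, each of $\tilde Z^{l,M},\tilde Z^{l-1,M}$ (and each of the processes $\tilde Z^{l},\tilde Z^{l-1}$) is a Markov chain on the integer times whose unit-time transition kernels are exactly the $P^{l,M}_{\mu^{l,M}_{p-1},p}$ (resp.\ $P^{l}_{\mu^{l}_{p-1},p}$), driven by the Brownian motion $W$, and that $W$ is independent of $\sigma(\mathcal{F}^{l-1,M}_T,\mathcal{F}^{l,M}_T)$ while all four processes share this same $W$. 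Writing $F^{\star}:=\varphi(\tilde Z^{\star}_t)\prod_{p=1}^tG(\tilde Z^{\star}_p,y_p)$ for $\star\in\{(l,M),(l-1,M),l,l-1\}$, we then have $\gamma^{\star}_t(\varphi)=\mathbb{E}[F^{\star}\mid\sigma(\mathcal{F}^{l-1,M}_T,\mathcal{F}^{l,M}_T)]$, so by Jensen's inequality and the tower property
\begin{equation*}
\mathbb{E}\big[|(\gamma^{l,M}_t-\gamma^{l-1,M}_t-\gamma^l_t+\gamma^{l-1}_t)(\varphi)|^k\big]\;\le\;\mathbb{E}\big[|F^{l,M}-F^{l-1,M}-F^{l}+F^{l-1}|^k\big],
\end{equation*}
and similarly for the first-order differences. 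It thus suffices to bound the $k$-th moment of the (second-order) differences of the products $F^{\star}$, using the boundedness and Lipschitz/$\mathcal{C}^2$ regularity of $G$ and $\varphi$ from Assumptions (A1--2) together with Lemma~\ref{lm:Z_rate}.

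For the two first-order bounds I would expand by the telescoping identity $\prod_{p=1}^{t+1}a_p-\prod_{p=1}^{t+1}b_p=\sum_{q=1}^{t+1}(a_q-b_q)\big(\prod_{p<q}b_p\big)\big(\prod_{p>q}a_p\big)$, where $a_{t+1},b_{t+1}$ denote the $\varphi$-factors and $a_p,b_p$ ($p\le t$) the $G$-factors of the two processes being compared. Every product factor is bounded by $\max\{|G|_0^{T},1\}\,|\varphi|_1$; each $G$-increment is bounded by $C\|\tilde Z^{l,M}_q-\tilde Z^{l-1,M}_q\|$ ($G$ is Lipschitz by (A2)) and the $\varphi$-increment by $|\varphi|_1\|\tilde Z^{l,M}_t-\tilde Z^{l-1,M}_t\|$. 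Raising to the $k$-th power, applying the $C_p$ inequality over the $t+1\le T+1$ terms, taking expectations and invoking the third bound of Lemma~\ref{lm:Z_rate} gives $\mathbb{E}[|(\gamma^{l,M}_t-\gamma^{l-1,M}_t)(\varphi)|^k]\le C\Delta_l^{k/2}|\varphi|_1^k$; comparing instead $\tilde Z^{l,M}$ with $\tilde Z^{l}$ and using the second bound of Lemma~\ref{lm:Z_rate} gives the $M^{-k/2}|\varphi|_1^k$ bound.

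The hard part is the third (second-order) bound, which requires a double telescoping of $\prod a^{l,M}_p-\prod a^{l-1,M}_p-\prod a^l_p+\prod a^{l-1}_p$, organised exactly as in the proof of Lemma~\ref{lm:discrete_mv_rate_2_part_3}: first telescope in the discretisation index (fine minus coarse) for the particle and the exact families simultaneously, then telescope each resulting summand in the ``particle versus exact'' index. This produces two kinds of terms, each multiplied by a uniformly bounded product of factors. The first kind are genuine second-order increments $G(\tilde Z^{l,M}_q,y_q)-G(\tilde Z^{l-1,M}_q,y_q)-G(\tilde Z^{l}_q,y_q)+G(\tilde Z^{l-1}_q,y_q)$, and one $\varphi$-analogue at time $t$; Lemma~\ref{lm:technical_lm_2} applied to $f=G(\cdot,y_q)$ (resp.\ $f=\varphi$) bounds these by $C\|\tilde Z^{l,M}_q-\tilde Z^{l-1,M}_q-\tilde Z^l_q+\tilde Z^{l-1}_q\|+C\|\tilde Z^l_q-\tilde Z^{l-1}_q\|(\|\tilde Z^{l,M}_q-\tilde Z^l_q\|+\|\tilde Z^{l-1,M}_q-\tilde Z^{l-1}_q\|)$, and the $\varphi$-term carries the factor $|\varphi|_2$ — this is precisely why $|\varphi|_2^k$ rather than $|\varphi|_1^k$ appears. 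The second kind are cross terms: a product of a discretisation increment (a single $G$- or $\varphi$-difference between levels $l$ and $l-1$) and a particle increment of a product of factors, the latter expanded by one further telescoping into bounded products times single particle differences. In the $k$-th moment, the leading piece of the first kind is $O(\Delta_l^{k/2}M^{-k/2})$ by the fourth bound of Lemma~\ref{lm:Z_rate}; the remaining piece of the first kind and the cross terms are handled by Cauchy--Schwarz using the second and third bounds of Lemma~\ref{lm:Z_rate}, which give $(\Delta_l^k M^{-k})^{1/2}=C\Delta_l^{k/2}M^{-k/2}$. Summing the finitely many (polynomially in $T$) terms via the $C_p$ inequality and taking the outer expectation completes the proof. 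The only real obstacle is the combinatorial bookkeeping of this double expansion and checking that every factor it produces is either uniformly bounded (by (A1--2)) or matches one of the four moment estimates of Lemma~\ref{lm:Z_rate}; once the expansion is organised correctly, each individual term is dispatched by a routine Cauchy--Schwarz/$C_p$ argument.
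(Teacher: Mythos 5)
Your proposal is correct and follows essentially the same route as the paper: the paper defines $\Phi(x_1,\dots,x_t)=\varphi(x_t)\prod_{p=1}^tG(x_p,y_p)$ (your $F^\star$), uses Jensen's inequality to drop the conditional expectation given $\mathcal{F}^{l,M}_t,\mathcal{F}^{l-1,M}_t$, and then applies the Lipschitz/$\mathcal{C}^2$ structure of $\Phi$ together with Lemma~\ref{lm:Z_rate} (and Lemma~\ref{lm:technical_lm_2} for the second-order difference). Your double-telescoping expansion and Cauchy--Schwarz treatment of the cross terms is just a more explicit rendering of the step the paper compresses into ``essentially the same but we use Lemma~\ref{lm:technical_lm_2} and~\ref{lm:Z_rate}.''
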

\begin{proof}
    Let $t\in\{0,1,\dots,T\}$. Define the function $\Phi:(\mathbb{R}^{d})^t\rightarrow\mathbb{R}$ by $\Phi(x_1,\dots,x_t) = \varphi(x_t)\prod_{i=1}^t G(x_i,y_i)$. From Assumption \hyperref[assump:A2]{(A2)} the function $\Phi$ is Lipchitz and $|\Phi|_1\leq C|\varphi|_1$. Using Jensen's inequality, the mean value theorem, and Lemma \ref{lm:Z_rate} we have
    \begin{equation*}
        \begin{split}
            \mathbb{E}\left[|(\gamma_t^{l,M} - \gamma^{l-1,M}_t)(\varphi)|^k\right] =\;& \mathbb{E}\left[\left|\mathbb{E}\left[ \Phi(\tilde{Z}_1^{l,M},\dots,\tilde{Z}_t^{l,M}) - \Phi(\tilde{Z}_1^{l-1,M},\dots,\tilde{Z}_t^{l-1,M})\bigg| \mathcal{F}^{l,M}_t,\mathcal{F}^{l-1,M}_t \right]\right|^k \right]\\
            \leq\;& \mathbb{E}\left[ \left|\Phi(\tilde{Z}_1^{l,M},\dots,\tilde{Z}_t^{l,M}) - \Phi(\tilde{Z}_1^{l-1,M},\dots,\tilde{Z}_t^{l-1,M})\right|^k \right]\\
            \leq\;& C\mathbb{E}\left[ |\varphi|_1^k \sum_{i=1}^t\|\tilde{Z}_i^{l,M} - \tilde{Z}_i^{l-1,M}\|^k \right]\\
            \leq\;& C|\varphi|_1^k \Delta_l^{k/2}.
        \end{split}
    \end{equation*}
    The second inequality in the lemma statement follows analogously. The proof of the third inequality is essentially the same but we use Lemma \ref{lm:technical_lm_2} and \ref{lm:Z_rate}.
\end{proof}
\begin{lem}\label{lm:pi_ml}
	Assume \hyperref[assump:A1]{(A1-2)}. Let $\varphi\in\mathcal{C}^2_b(\mathbb{R}^d)\cap\mathcal{B}_b(\mathbb{R}^d)$, $t\in\{0,1,2,\dots,T\}$, and $k\in\mathbb{N}$. There exists $C<+\infty$ such that for every $(l,M)\in\mathbb{N}\times \mathbb{N}$:
 	\begin{equation*}
		\mathbb{E}\left[\left|(\pi_{t}^{l,M} - \pi_{t}^{l-1,M})(\varphi)\right|^k\right] \leq C \Delta_l^{k/2}|\varphi|_{1}^{k},\qquad \mathbb{E}\left[\left|(\pi_{t}^{l,M} - \pi_{t}^{l})(\varphi)\right|^k\right] \leq C \frac{1}{M^{k/2}}|\varphi|_{1}^{k},
	\end{equation*}

	\begin{equation*}
		\mathbb{E}\left[\left|(\pi_{t}^{l,M} - \pi_{t}^{l-1,M} - \pi_{t}^{l} + \pi_{t}^{l-1})(\varphi)\right|^k\right] \leq C \frac{\Delta_l^{k/2}}{M^{k/2}}|\varphi|_{2}^{k}.
	\end{equation*}
\end{lem}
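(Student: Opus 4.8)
The plan is to reduce all three bounds to Lemma~\ref{lm:eta_ml} by writing each normalized functional as a ratio $\pi_t^{s,M}(\varphi)=\gamma_t^{s,M}(\varphi)/\gamma_t^{s,M}(1)$ and controlling the denominators. Under Assumption~(A2), $\underline G:=\inf_{x,y}G(x,y)>0$, and since each transition kernel is a probability kernel, for $t\le T$ one has pointwise $\gamma_t^{s,M}(1)\ge\underline G^{\,t}>0$, and likewise $\gamma_t^{s}(1)\ge\underline G^{\,t}$; hence $1/\gamma_t^{s,M}(1)$ and $1/\gamma_t^s(1)$ are bounded by a constant $C$ depending only on $T$ and $\underline G$, uniformly in $l$ and $M$. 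We also use $|\pi_t^{s,M}(\varphi)|\le|\varphi|_0\le|\varphi|_1\le|\varphi|_2$ and $|\pi_t^s(\varphi)|\le|\varphi|_0$. Finally, the constant function $1$ lies in $\mathcal{C}_b^2(\mathbb{R}^d)\cap\mathcal{B}_b(\mathbb{R}^d)$ with $|1|_1=|1|_2=1$, so Lemma~\ref{lm:eta_ml} applies to it.

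For the first two inequalities I would use the elementary identity
\[
\pi_t^{l,M}(\varphi)-\pi_t^{l-1,M}(\varphi)=\frac{1}{\gamma_t^{l,M}(1)}\Big[(\gamma_t^{l,M}-\gamma_t^{l-1,M})(\varphi)-\pi_t^{l-1,M}(\varphi)\,(\gamma_t^{l,M}-\gamma_t^{l-1,M})(1)\Big],
\]
together with the analogous identity for $\pi_t^{l,M}(\varphi)-\pi_t^{l}(\varphi)$, obtained by replacing $\gamma_t^{l-1,M}$ by $\gamma_t^{l}$ and $\pi_t^{l-1,M}$ by $\pi_t^{l}$. Taking $k$-th moments, bounding $1/\gamma_t^{l,M}(1)$ and $\pi_t^{l-1,M}(\varphi)$ as above, applying the $C_p$ inequality, and invoking the first inequality of Lemma~\ref{lm:eta_ml} for both $\varphi$ and $1$ gives $\mathbb{E}[|(\pi_t^{l,M}-\pi_t^{l-1,M})(\varphi)|^k]\le C\Delta_l^{k/2}|\varphi|_1^k$. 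The second inequality follows in the same way from the second inequality of Lemma~\ref{lm:eta_ml}, yielding $C M^{-k/2}|\varphi|_1^k$.

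For the mixed second difference I would subtract the two identities above and split the result as $\big(\tfrac{1}{\gamma_t^{l,M}(1)}-\tfrac{1}{\gamma_t^l(1)}\big)A+\tfrac{1}{\gamma_t^l(1)}(A-B)$, where $A=(\gamma_t^{l,M}-\gamma_t^{l-1,M})(\varphi)-\pi_t^{l-1,M}(\varphi)(\gamma_t^{l,M}-\gamma_t^{l-1,M})(1)$ and $B$ is the corresponding expression built from the exact laws. The prefactor $\tfrac{1}{\gamma_t^{l,M}(1)}-\tfrac{1}{\gamma_t^l(1)}=(\gamma_t^l-\gamma_t^{l,M})(1)/(\gamma_t^{l,M}(1)\gamma_t^l(1))$ has $k$-th moment $O(M^{-k/2})$ by the second inequality of Lemma~\ref{lm:eta_ml}, while $A$ has $k$-th moment $O(\Delta_l^{k/2}|\varphi|_1^k)$ by the first; a Cauchy--Schwarz step then bounds $\big(\tfrac{1}{\gamma_t^{l,M}(1)}-\tfrac{1}{\gamma_t^l(1)}\big)A$ by $C\Delta_l^{k/2}M^{-k/2}|\varphi|_1^k$. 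In $A-B$, the part involving only $\varphi$ is exactly $(\gamma_t^{l,M}-\gamma_t^{l-1,M}-\gamma_t^l+\gamma_t^{l-1})(\varphi)$, controlled by the third inequality of Lemma~\ref{lm:eta_ml} with cost $|\varphi|_2^k$; the part carrying the $\pi$-weights, namely $\pi_t^{l-1,M}(\varphi)(\gamma_t^{l,M}-\gamma_t^{l-1,M})(1)-\pi_t^{l-1}(\varphi)(\gamma_t^l-\gamma_t^{l-1})(1)$, I would expand by adding and subtracting $\pi_t^{l-1}(\varphi)(\gamma_t^{l,M}-\gamma_t^{l-1,M})(1)$, producing $(\pi_t^{l-1,M}-\pi_t^{l-1})(\varphi)\cdot(\gamma_t^{l,M}-\gamma_t^{l-1,M})(1)$ — bounded via Cauchy--Schwarz by the already-proven second inequality of this lemma ($O(M^{-k/2})$) and the first inequality of Lemma~\ref{lm:eta_ml} ($O(\Delta_l^{k/2})$) — together with $\pi_t^{l-1}(\varphi)\cdot(\gamma_t^{l,M}-\gamma_t^{l-1,M}-\gamma_t^l+\gamma_t^{l-1})(1)$, bounded by $|\varphi|_0$ times the third inequality of Lemma~\ref{lm:eta_ml} applied to $1$. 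Collecting all pieces and using $|\varphi|_0\le|\varphi|_1\le|\varphi|_2$ gives the stated bound $C\Delta_l^{k/2}M^{-k/2}|\varphi|_2^k$.

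The routine ingredients here are the algebraic ratio identities and repeated use of the $C_p$ and Cauchy--Schwarz inequalities. The step I expect to be the main obstacle is the bookkeeping in the second difference: one must insert exactly the cross terms that make every resulting summand either the doubly-small mixed $\gamma$-difference of Lemma~\ref{lm:eta_ml} or a product of one factor that is $O(\Delta_l^{1/2})$ and one that is $O(M^{-1/2})$, never a lone factor of size only $O(\Delta_l^{1/2})$ or only $O(M^{-1/2})$; and the uniform lower bound $\gamma_t^{\cdot}(1)\ge\underline G^{\,t}$ must be invoked independently of $l$ and $M$, which is precisely where Assumption~(A2) and the finiteness of $t$ enter. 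A minor point is that the proof of the third inequality uses the second, so the three bounds should be established in that order.
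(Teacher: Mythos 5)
Your proof is correct and follows essentially the same route as the paper: reduce all three bounds to Lemma \ref{lm:eta_ml} via an algebraic decomposition of the ratio differences, using the uniform lower bound $\gamma_t^{\cdot}(1)\ge \underline{G}^{\,t}$ from (A2) and Cauchy--Schwarz on the cross terms. Your second-difference identity is organized slightly differently from the six-term identity the paper writes down, but it produces the same structure (each summand is either a doubly-small mixed $\gamma$-difference or a product of one $O(\Delta_l^{1/2})$ and one $O(M^{-1/2})$ factor), so the argument is the same in substance.
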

\begin{proof}
The first two inequalities follow directly form Lemma \ref{lm:eta_ml}. The third inequality follows directly as well from Lemma \ref{lm:eta_ml} and Cauchy Schwarz inequality and using the algebraic identity
\begin{equation*}
\begin{split}
    \frac{a}{A} - \frac{b}{B} - \frac{c}{C} + \frac{d}{D} =\,& \frac{1}{A}(a-b-c+d) -\frac{b}{AB}(A-B-C+D) - \frac{1}{AC}(A-C)(c-d)\\
    -\,&\frac{1}{AB}(C-D)(b-d)+\frac{d}{CBD}(B-D)(C-D) + \frac{d}{ACB}(A-C)(C-D). \\
\end{split}
\end{equation*}
\end{proof}

\subsection{Proof of Theorem \ref{thm:single_level_error}}
\label{appen:proof_sl}
We begin this section by bounding the bias of the estimator $\pi^{L}(\varphi)$. We use Proposition 3.1 from \cite{koh} which we restate as Lemma \ref{lm:weak_error} below.
\begin{rem}
    Consider the process $Z_t' =Z_t-x_0$ and define the functions $a'(u,v)=a(u+x_0,v)$, $b'(u,v)=b(u+x_0,v)$, $\xi_j'(u,v)=\xi_j(u+x_0,v+x_0)$ for $j\in\{1,2\}$. $Z_t'$ satisfies SDE \eqref{eq:sde} but with $a,b,\xi_1,\xi_2,x_0$ replaced with $a',b',\xi_1',\xi_2',0$. $C$ in the Proposition 3.1 in \cite{koh} can be bounded by a constant that depends on the bounds of $a',b',\xi_1',\xi_2'$ and their derivatives which are finite by Assumption \hyperref[assump:A1]{A1}, this allows us to remove the dependence of $x_0$ from the constant $C$ in Proposition 3.1 of \cite{koh}.
\end{rem}
\begin{lem}\label{lm:weak_error}
	Assume \hyperref[assump:W2]{(W2)}. Let $t\in\{0,1,\dots,T\}$. For every $\varphi\in\mathcal{C}^{\infty}_b(\mathbb{R}^d)\cap\mathcal{B}_b(\mathbb{R}^d)$ there exists $C<+\infty$ such that for every $l\in\mathbb{N}_0$:
     \begin{equation*}
         \left|P_{\mu_{t-1},t}(\varphi) - P_{\mu_{t-1}^l,t}(\varphi)\right|_0\leq C\Delta_l.
     \end{equation*}
\end{lem}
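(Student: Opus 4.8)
The lemma is Proposition 3.1 of \cite{koh} applied to the unit-time transition kernel, so the plan is to verify its hypotheses in the present setting and invoke it; I sketch the underlying argument for completeness. \textbf{First}, freeze the true flow of marginals $(\mu_r)_{r\in[t-1,t]}$ and, for $(s,x)\in[t-1,t]\times\mathbb{R}^d$, consider the \emph{decoupled} (linear, time-inhomogeneous, non-McKean) SDE obtained by substituting $\mu_r$ into the coefficients of \eqref{eq:sde}. Let $u(s,x)$ denote the expectation of $\varphi$ at the terminal time $t$ for this SDE started from $x$ at time $s$; then $P_{\mu_{t-1},t}(\varphi)(x)=u(t-1,x)$ and $u$ solves the backward Kolmogorov equation $\partial_s u+\mathcal{L}_s u=0$, $u(t,\cdot)=\varphi$, with $\mathcal{L}_s$ built from $a(\cdot,\overline{\xi}_1(\cdot,\mu_s))$ and $b(\cdot,\overline{\xi}_2(\cdot,\mu_s))$. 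Under \hyperref[assump:W2]{(W2)} these coefficients lie in $\mathcal{C}^{\infty}_b$ uniformly in $s$, and \hyperref[assump:W1]{(W1)} together with the uniform ellipticity in \hyperref[assump:A1]{(A1)} gives $u(s,\cdot)\in\mathcal{C}^{\infty}_b(\mathbb{R}^d)$ with $\sup_{s\in[t-1,t]}|u(s,\cdot)|_k\leq C_k$ for every $k$.

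\textbf{Second}, run the Euler scheme \eqref{eq:euler} on $[t-1,t]$ with the true laws in the coefficients, apply It\^o's formula to $s\mapsto u(s,\cdot)$ on each mesh subinterval and telescope; a Taylor expansion using boundedness of $a,b$, the derivative bounds on $u$ from the first step, and standard moment bounds for the scheme (of the type in Lemma \ref{lm:discrete_mv_rate_2_part_1}) makes each one-step error $O(\Delta_l^2)$, and hence the total $O(\Delta_l)$ --- the classical Talay--Tubaro estimate. \textbf{Third}, it remains to replace the true marginals $\mu_r$ by the discretized marginals $\mu_r^l$ inside the coefficients: since \hyperref[assump:W2]{(W2)} gives $\overline{\xi}_j(x,\mu)=\int\zeta_j(x-y)\mu(dy)$, the measure dependence is linear through $\mathcal{C}^{\infty}_b$ kernels, so perturbing the measure argument by $\mu_r-\mu_r^l$ perturbs the coefficients by an amount controlled by a transport distance between $\mu_r$ and $\mu_r^l$; a Gr\"onwall argument on the mesh, started from $\mu_0=\mu_0^l=\delta_{x_0}$ and propagated across the unit interval, shows this marginal weak error is itself $O(\Delta_l)$, and feeding it through the stability of $u$ produces a further $O(\Delta_l)$ term. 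Summing the two contributions and taking the supremum over $x$ yields the stated bound; the constant depends only on $T$, finitely many $|\varphi|_k$, and the $\mathcal{C}^{\infty}_b$ norms of $a,b,\xi_1,\xi_2$, and by the Remark preceding the lemma one may center the process at $x_0$ so that it is $x_0$-independent.

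\textbf{The main obstacle} is the first step: obtaining the uniform-in-$s$ $\mathcal{C}^{\infty}_b$ bounds on the Kolmogorov solution $u$ and, in the third step, the stability of the whole construction under perturbation of the measure flow. This is precisely why the strong smoothness assumptions \hyperref[assump:W1]{(W1)}--\hyperref[assump:W2]{(W2)} are imposed in place of the weaker \hyperref[assump:A1]{(A1)}, and it is the content of Proposition 3.1 of \cite{koh}; accordingly the actual proof merely cites that proposition and records the $x_0$-independence of the constant noted in the Remark above.
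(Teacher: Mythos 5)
Your proposal matches the paper exactly: the paper's ``proof'' of Lemma \ref{lm:weak_error} consists of citing Proposition 3.1 of \cite{koh} together with the preceding Remark on recentering the process at $x_0$ so that the constant is $x_0$-independent, which is precisely what you do (your additional sketch of the Kolmogorov-equation/Talay--Tubaro argument behind that proposition is extra detail the paper does not supply, but it is consistent with the cited result).
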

\begin{lem}\label{lm:pi_eta_estimates_1_part_1}
Assume \hyperref[assump:A2]{(A2)}. Let $t\in\{0,\dots,T\}$ There exists a constant $C<+\infty$ such that for every $l\in\mathbb{N}_0$ and  $\varphi\in\mathcal{B}_b(\mathbb{R}^d)$:
    \begin{equation*}
        \max(|\gamma_t(\varphi)|,|\gamma_t^l(\varphi)|) \leq C|\varphi|_0,
    \end{equation*}
\end{lem}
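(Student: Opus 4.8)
The plan is to estimate $\gamma_t(\varphi)$ and $\gamma_t^l(\varphi)$ directly from their integral definitions: pull the supremum norm of $\varphi$ and an upper bound for $G$ out of the integral, and then use that the remaining iterated integral of transition kernels equals one.

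First I would record that, by Assumption \hyperref[assump:A2]{(A2)}, $G$ is bounded; set $\bar{G} := \sup_{(x,y)} G(x,y) < +\infty$, and recall that the observations $y_1,\dots,y_t$ are fixed. From the definition of $\gamma_t$, bounding $|\varphi(x_t)| \le |\varphi|_0$ and each factor $G(x_p,y_p) \le \bar{G}$ yields
\[
|\gamma_t(\varphi)| \;\le\; |\varphi|_0 \, \bar{G}^{\,t} \int_{\mathbb{R}^{dt}} \prod_{p=1}^t P_{\mu_{p-1},p}(x_{p-1},dx_p).
\]
Second, since each $P_{\mu_{p-1},p}(x_{p-1},\cdot)$ is a probability measure on $\mathbb{R}^d$ (by construction it is the unit-time law of the solution of \eqref{eq:sde} started from $x_{p-1}$, with the law flow $\mu$ frozen), integrating out $x_t, x_{t-1},\dots,x_1$ successively shows the iterated integral equals $1$; hence $|\gamma_t(\varphi)| \le \bar{G}^{\,t}|\varphi|_0$. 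The identical computation applies to $\gamma_t^l$, because the Euler--Maruyama kernel $P_{\mu_{p-1}^l,p}^l(x_{p-1},\cdot)$ is also a probability measure on $\mathbb{R}^d$ — it is a finite composition of Gaussian increments with drift and diffusion coefficients evaluated at the frozen laws $\mu_{\cdot}^l$ — so $|\gamma_t^l(\varphi)| \le \bar{G}^{\,t}|\varphi|_0$ with the same constant.

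Finally, taking $C := \max\{1,\bar{G}^{\,T}\}$ and noting $t \le T$ (with the case $t=0$ being the trivial equality $\gamma_0(\varphi) = \varphi(x_0)$, whence $|\gamma_0(\varphi)| \le |\varphi|_0$), one obtains $\max(|\gamma_t(\varphi)|, |\gamma_t^l(\varphi)|) \le C|\varphi|_0$ uniformly over $l \in \mathbb{N}_0$. There is essentially no obstacle here: the only ingredients are the boundedness of $G$ from Assumption \hyperref[assump:A2]{(A2)} and the fact that the exact and Euler transition kernels are genuine Markov kernels, which is immediate from their definitions; in particular the constant $C$ is manifestly independent of $l$.
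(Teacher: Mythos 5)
Your proof is correct and is exactly the argument the paper intends: the paper simply states that the bound ``follows directly from Assumption (A2) and the definitions,'' and your write-up fills in precisely those details (bounding $\varphi$ by $|\varphi|_0$, bounding $G$ by its supremum, and using that the exact and Euler transition kernels are probability measures so the iterated integral is $1$). No discrepancy with the paper's approach.
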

\begin{proof}
    Follows directly from Assumption \hyperref[assump:A2]{(A2)} and the definitions of $\gamma_t(\varphi)$ and $\gamma_t^l(\varphi)$.
\end{proof}
\begin{lem}\label{lm:pi_eta_estimates_1_part_2}
Assume \hyperref[assump:A1]{(A1-2)} and \hyperref[assump:W1]{(W1-2)}. Let $t\in\{0,\dots,T\}$. For every $\varphi\in\mathcal{C}^{\infty}_b(\mathbb{R}^d)\cap\mathcal{B}_b(\mathbb{R}^d)$ there exists a constant $C<+\infty$ such that for every $(l,M)\in\mathbb{N}_0\times\mathbb{N}$
    \begin{equation*}
        \left|\gamma_t(\varphi) - \gamma_t^{l}(\varphi)\right|_0 \leq C\Delta_l,
    \end{equation*}
    \begin{equation*}
        \left|\pi_t(\varphi) - \pi_t^{l}(\varphi)\right|_0 \leq C\Delta_l.
    \end{equation*}
\end{lem}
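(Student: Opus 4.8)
The plan is to establish the bound on $\gamma_t$ first, then deduce the one on $\pi_t$ from it together with the positivity of $G$. (Neither $\gamma_t(\varphi),\gamma_t^l(\varphi)$ nor $\pi_t(\varphi),\pi_t^l(\varphi)$ depends on $M$, so the quantifier over $M$ is vacuous; and all constants below are uniform in the fixed initial condition $x_0$, which is how the $|\cdot|_0$ in the statement should be read here.)

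Set $G_p:=G(\cdot,y_p)$ and, for $f\in\mathcal{B}_b(\mathbb{R}^d)$, introduce the one-step operators $R_pf:=P_{\mu_{p-1},p}(f\,G_p)$ and $R_p^lf:=P^l_{\mu^l_{p-1},p}(f\,G_p)$. Unfolding the nested integrals defining $\gamma_t$ and $\gamma_t^l$ gives $\gamma_t(\varphi)=R_1\cdots R_t\varphi(x_0)$ and $\gamma_t^l(\varphi)=R_1^l\cdots R_t^l\varphi(x_0)$, and the telescoping identity
\[
R_1\cdots R_t-R_1^l\cdots R_t^l=\sum_{p=1}^t R_1^l\cdots R_{p-1}^l\,(R_p-R_p^l)\,R_{p+1}\cdots R_t
\]
reduces the estimate to two ingredients. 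First, each $R_q^l$ maps $(\mathcal{B}_b(\mathbb{R}^d),|\cdot|_0)$ to itself with $|R_q^lg|_0\le|G|_0\,|g|_0$, since $P^l_{\mu^l_{q-1},q}$ is a Markov kernel; hence $|R_1^l\cdots R_{p-1}^lg|_0\le|G|_0^{\,T}|g|_0$. Second, with $f_p:=R_{p+1}\cdots R_t\varphi$, the term $(R_p-R_p^l)f_p=P_{\mu_{p-1},p}(f_pG_p)-P^l_{\mu^l_{p-1},p}(f_pG_p)$ is precisely the one-step error of Lemma~\ref{lm:weak_error} applied to the test function $f_pG_p$, hence bounded by $C\Delta_l$ provided $f_pG_p$ has the regularity and norm control that lemma requires.

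The step I expect to require the most care is verifying exactly that: that $f_pG_p$ lies in the relevant $\mathcal{C}^k_b$-class with seminorms bounded uniformly in $l$. Here one uses (W1) together with the smoothing properties of the McKean--Vlasov flow under the uniform ellipticity in (A1): applying a true transition kernel $P_{\mu_q,q+1}$ preserves (and, by ellipticity, improves) smoothness, so that $f_p=R_{p+1}\cdots R_t\varphi$ stays in a fixed $\mathcal{C}^k_b(\mathbb{R}^d)$-class, while multiplication by $G_p\in\mathcal{C}^2_b(\mathbb{R}^d)$ (from (A2)) keeps $f_pG_p$ there; the resulting bound on the relevant seminorm of $f_pG_p$ depends only on $\varphi$, $G$, $T$ and the coefficients of the SDE and the observation density — not on $l$. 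Granting this, each of the at most $T$ telescoping terms is $\le C\Delta_l$, so $|\gamma_t(\varphi)-\gamma_t^l(\varphi)|\le C\Delta_l$; in particular, taking $\varphi\equiv 1\in\mathcal{C}^\infty_b(\mathbb{R}^d)\cap\mathcal{B}_b(\mathbb{R}^d)$ gives $|\gamma_t(1)-\gamma_t^l(1)|\le C\Delta_l$.

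For the bound on $\pi_t$ I would use the identity
\[
\pi_t(\varphi)-\pi_t^l(\varphi)=\frac{\gamma_t(\varphi)-\gamma_t^l(\varphi)}{\gamma_t(1)}+\frac{\gamma_t^l(\varphi)}{\gamma_t(1)\,\gamma_t^l(1)}\bigl(\gamma_t^l(1)-\gamma_t(1)\bigr).
\]
By (A2) we have $\underline G:=\inf_{x,y}G(x,y)>0$, and since $\prod_{p=1}^t P_{\mu_{p-1},p}(x_{p-1},dx_p)$ and $\prod_{p=1}^t P^l_{\mu^l_{p-1},p}(x_{p-1},dx_p)$ are probability measures, both $\gamma_t(1)$ and $\gamma_t^l(1)$ are at least $\underline G^{\,t}>0$; by Lemma~\ref{lm:pi_eta_estimates_1_part_1}, $|\gamma_t^l(\varphi)|\le C|\varphi|_0$. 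Substituting these bounds and the two $\mathcal{O}(\Delta_l)$ estimates from the previous paragraph into the identity yields $|\pi_t(\varphi)-\pi_t^l(\varphi)|\le C\Delta_l$, completing the proof. Apart from the regularity bookkeeping flagged above, every step is routine.
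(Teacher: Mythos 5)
Your proof is correct and is essentially the paper's own argument: the telescoping sum is just the unrolled form of the paper's induction on $t$, resting on the same two ingredients --- the one-step weak-error bound of Lemma~\ref{lm:weak_error} applied to the exactly-propagated test function (whose membership in $\mathcal{C}^\infty_b$ is precisely what (W1) is invoked for) and the $|\cdot|_0$-boundedness of the approximate prefix operators as in Lemma~\ref{lm:pi_eta_estimates_1_part_1}. The passage from $\gamma_t$ to $\pi_t$ via the ratio identity and the lower bound on $\gamma_t(1)$, $\gamma_t^l(1)$ from (A2) is likewise identical to the paper's.
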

\begin{proof}
    For concise notation we define the functions $G_t(x)=G(x,y_t)$ for every $t\in\{0,\dots,T\}$ and $x\in\mathbb{R}^d$. We will prove the inequalities using induction.
    The first inequality is satisfied trivially at $t=0$ since $\gamma_0=\gamma_0^L=\delta_{x_0}$. Assume the inequality holds for $t-1$ where $t\geq 1$, we write
    \begin{equation*}
    \begin{split}
        \gamma_t(\varphi) - \gamma_{t}^l(\varphi)
        =\gamma_{t-1}^l(P_{\mu_{t-1},t}(G_t\varphi) - P^l_{\mu^l_{t-1},t}(G_t\varphi)) + (\gamma_{t-1}-\gamma_{t-1}^l)(P_{\mu_{t-1},t}(G_t\varphi)).
    \end{split}
    \end{equation*}
    Lemma \ref{lm:pi_eta_estimates_1_part_1} and Lemma \ref{lm:weak_error} yields the bound for the first term. Assumption \hyperref[assump:W1]{(W1)} allows us to apply the induction hypothesis to bound the second term. 
    The second inequality follows from the first one and the equality
    \begin{equation*}
        \begin{split}
            \;&\pi_t(\varphi) - \pi_t^{l}(\varphi)=
            \frac{1}{\gamma_t(1)\gamma_t^l(1)}(\gamma_t^l(1)-\gamma_t(1))\gamma_t^l(\varphi) - \frac{1}{\gamma_t(1)}(\gamma^l_t(\varphi) - \gamma_t(\varphi)).
        \end{split}
    \end{equation*}
    Notice that the quantities $\gamma_t(1)$ and $\gamma^l_t(1)$ are bounded below away from $0$ because of Assumption \hyperref[assump:A2]{(A2)}.
\end{proof}
The following lemma is a consequence of the standard theory of $\mathbb{L}_p$-bounds for particle filters (Theorem 2.9 in \cite{moral_2000}). 
\begin{lem}\label{lm:pf1}
	Assume \hyperref[assump:A1]{(A1-2)} and let $t\in\{1,\dots,T\}$. There exists $C<+\infty$ such that for every $\varphi\in\mathcal{B}_p(\mathbb{R}^d)$ and $(l,M,N)\in\mathbb{N}_0\times \mathbb{N}\times \mathbb{N}$:
	\begin{equation*}
		\mathbb{E}\left[ \left|\pi^{l,M}_t(\varphi)- \pi^{l,M,N}_t(\varphi) \right|^2 \right] \leq C\frac{1}{N}|\varphi|_0^2.
	\end{equation*}
\end{lem}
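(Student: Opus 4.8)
The plan is to condition on the auxiliary particle system and reduce the claim to the classical $\mathbb{L}_p$-theory for particle filters. Write $\mathcal{F}^{l,M}_t$ for the natural filtration of the particles $\{Z^{l,M,i}_\cdot\}_{i=1}^M$ produced by the repeated calls to Algorithm~\ref{alg:basic_method} inside Algorithm~\ref{alg:pf} (its step~2), which generate the empirical laws $\mu^{l,M}_\cdot$; the key structural fact is that these particles are simulated independently of the $N$ filtering particles of steps~3--4. Consequently, conditionally on $\mathcal{F}^{l,M}_t$, each transition kernel $P^{l,M}_{\mu^{l,M}_{p-1},p}$ ($p=1,\dots,t$) is a \emph{fixed} (non-random) Markov kernel on $\mathbb{R}^d$, and steps~3--4 of Algorithm~\ref{alg:pf} run up to time $t$ are exactly a standard bootstrap particle filter with $N$ particles, mutation kernels $P^{l,M}_{\mu^{l,M}_{p-1},p}$, potential functions $G(\cdot,y_p)$, and multinomial resampling at each step. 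By construction $\pi^{l,M}_t=\gamma^{l,M}_t/\gamma^{l,M}_t(1)$ is precisely the normalised Feynman--Kac measure associated with this (conditionally deterministic) model, and $\pi^{l,M,N}_t(\varphi)$ is its particle approximation.

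The next step is to invoke the classical $\mathbb{L}_p$-bound for particle filters (Theorem~2.9 of \cite{moral_2000}, cited just before the statement), applied conditionally on $\mathcal{F}^{l,M}_t$, to get
\[
\mathbb{E}\!\left[\left|\pi^{l,M,N}_t(\varphi)-\pi^{l,M}_t(\varphi)\right|^2 \,\middle|\, \mathcal{F}^{l,M}_t\right]\le \frac{C_t}{N}\,|\varphi|_0^2 .
\]
The point requiring genuine care is that $C_t$ is uniform in the conditioning. In the classical proof the mutation kernels enter only through $\int P(x,dz)=1$ (the Dobrushin-type contraction estimates and the Marcinkiewicz--Zygmund step only ever see the potentials), so the relevant constants are built solely from products, over the finite horizon $p\le t\le T$, of oscillation/ratio bounds of $G$. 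By Assumption~\hyperref[assump:A2]{(A2)}, $\underline G:=\inf_{x,y}G(x,y)>0$ and $\overline G:=\sup_{x,y}G(x,y)<\infty$, and hence $\underline G^{\,p}\le\gamma^{l,M}_p(1)\le\overline G^{\,p}$ for every $p\le t$, uniformly in $l,M$ and in $\omega$. Thus $C_t$ depends only on $t$, $\overline G$, $\underline G$, and is independent of $l,M,N$ and of the realisation of $\mathcal{F}^{l,M}_t$.

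Finally, taking expectations in the displayed inequality and using the tower property of conditional expectation yields
\[
\mathbb{E}\!\left[\left|\pi^{l,M,N}_t(\varphi)-\pi^{l,M}_t(\varphi)\right|^2\right]\le \frac{C_t}{N}\,|\varphi|_0^2\le \frac{C}{N}\,|\varphi|_0^2,
\]
with $C:=\max_{1\le t\le T}C_t<\infty$, which is the claim. The main obstacle, as noted, is establishing that the constant in the conditional particle-filter bound does not depend on the frozen law-approximation; once this is made precise by re-reading the cited estimate, the remainder is routine, and the same conditioning device is the natural one to reuse for the coupled filter in the multilevel analysis.
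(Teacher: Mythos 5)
Your proof is correct and follows essentially the same route as the paper: the paper also conditions on the auxiliary filtration $\mathcal{F}^{l,M}_t$, invokes Theorem~2.9 of \cite{moral_2000} for the resulting (conditionally fixed) Feynman--Kac model, and uses Assumption \hyperref[assump:A2]{(A2)} to argue (as in its Remark on the deterministic constant) that the conditional constant can be bounded by a deterministic one before taking expectations. No substantive difference.
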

\begin{rem}\label{rm:C_deterministic_1}
    Because $G$ is bounded we can apply Theorem 2.9 in \cite{moral_2000} to obtain the bound $\mathbb{E}[ (\pi^{l,M}_t(\varphi)- \pi^{l,M,N}_t(\varphi))^2 |\mathcal{F}^{l,M}_t]\leq C\frac{|\varphi|_0}{N}$ where $C$ is $\mathcal{F}^{l,M}_t$. However, following the formula for $C$ in Theorem 2.9 in \cite{moral_2000} and Assumption \hyperref[assump:A2]{(A2)} allow us to bound $C$ from above by a deterministic constant. This justifies the deterministic $C$ in Lemma \ref{lm:pf1}.
\end{rem}

\begin{proof}[\bf{Proof of Theorem \ref{thm:single_level_error}}:]
Using the $C^p$ inequality
    \begin{equation*}
    \begin{split}
        &\mathbb{E}\left[(\pi_t(\varphi) - \pi_t^{L,M,N}(\varphi))^2\right]\\
        \leq\,& C\left( (\pi_t(\varphi) - \pi_t^{L}(\varphi))^2+ \mathbb{E}\left[(\pi_t^{L}(\varphi) - \pi_t^{L,M}(\varphi))^2\right] + \mathbb{E}\left[(\pi_t^{L,M}(\varphi) - \pi_t^{L,M,N}(\varphi))^2\right] \right).
        \end{split}
    \end{equation*}
    Lemmata \ref{lm:pi_eta_estimates_1_part_2}, \ref{lm:pi_ml}, and \ref{lm:pf1} finish the proof.
\end{proof}
\subsection{Proof of Theorem \ref{thm:ml_error}}
\label{appen:proof_ml}
We condition on $\sigma(\mathcal{F}^{l-1,M}_t,\mathcal{F}^{l,M}_t)$ and consider Theorem C.1 and Lemma C.3 in \cite{mlpf} under this conditioning. Following the same reasoning as in Remark \ref{rm:C_deterministic_1} the constant $C(n,\varphi)$ in \cite{mlpf} can be assumed to be deterministic in our setting because the functions $a,b,\xi_1,\xi_2,G$ are bounded by Assumptions \hyperref[assump:A1]{(A1-2)}. Thus we have that for $s\in\{1,2\}$:
\begin{equation*}
		\left|\mathbb{E}\left[ \left( \pi^{l,M,N}_{t}(\varphi) - \pi^{l-1,M,N}_{t}(\varphi) - \pi^{l,M}_{t}(\varphi) + \pi^{l-1,M}_{t}(\varphi)\right)^s\right]\right| \leq C\frac{\mathbb{E}[B(n)]^{s/2}}{N},
\end{equation*}
where we used the inequality $\mathbb{E}[\sqrt{B(n)}]\leq \mathbb{E}[B(n)]^{1/2}$ and $B(n)$ is defined in equation (C.1) in \cite{mlpf}. Lemma \ref{lm:Z_rate} verifies an in expectation (over all randomness) version of Assumption D.1 in \cite{mlpf} from which a version of Theorem D.5 in \cite{mlpf} follows (with expectation taken over all randomness). We apply the $C^p$ inequality on $B(n)$, the first sum corresponding to maximum coupling is bounded by our version of Theorem D.5 in \cite{mlpf} and for the other terms Lemmata \ref{lm:Z_rate} and \ref{lm:eta_ml} suffice. Therefore we have $\mathbb{E}[B(n)]\leq \Delta^{1/2}$. We state the result of this discussion as the following Lemma.

\begin{lem}\label{lm:mlpf_increment}
	Assume \hyperref[assump:A1]{(A1-2)}. Let $\varphi\in\mathcal{C}_b^1(\mathbb{R}^d)\cap\mathcal{B}_b(\mathbb{R}^d)$ and $t\in\{1,\dots,T\}$. There exists $C<+\infty$ such that for every $(l,M,N)\in\mathbb{N}\times\mathbb{N}\times \mathbb{N}$ and $s\in\{1,2\}$:
	\begin{equation*}
		\left|\mathbb{E}\left[ \left( \pi^{l,M,N}_{t}(\varphi) - \pi^{l-1,M,N}_{t}(\varphi) - \pi^{l,M}_{t}(\varphi) + \pi^{l-1,M}_{t}(\varphi)\right)^s\right]\right| \leq C\frac{\Delta_l^{s/4}}{N}
	\end{equation*}
\end{lem}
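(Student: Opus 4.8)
The plan is to reduce the bound to the general coupled particle filter (CPF) $\mathbb{L}_2$-theory of \cite{mlpf}, applied conditionally on the auxiliary law-approximating systems, and then to plug in the pathwise rates proved in Section \ref{appen:pi_increments_bounds}.

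First I would condition on $\mathcal{G}:=\sigma(\mathcal{F}^{l-1,M}_t,\mathcal{F}^{l,M}_t)$. Given $\mathcal{G}$, the pairs of empirical measures $\{\mu^{l,M}_{\cdot}\}$ and $\{\tilde\mu^{l-1,M}_{\cdot}\}$ that drive step~3 of Algorithm~\ref{alg:cpf} are frozen, so conditionally on $\mathcal{G}$ the CPF is exactly a standard coupled particle filter for the pair of (conditionally deterministic) Feynman--Kac models built from $(P^{l,M}_{\mu^{l,M},\cdot},G)$ and $(P^{l-1,M}_{\tilde\mu^{l-1,M},\cdot},G)$. Theorem~C.1 and Lemma~C.3 of \cite{mlpf} then apply under this conditioning, yielding, for $s\in\{1,2\}$,
$$
\left|\mathbb{E}\left[\left(\pi^{l,M,N}_t(\varphi)-\pi^{l-1,M,N}_t(\varphi)-\pi^{l,M}_t(\varphi)+\pi^{l-1,M}_t(\varphi)\right)^s \,\middle|\,\mathcal{G}\right]\right|\le C(t,\varphi)\,\frac{B(n)^{s/2}}{N},
$$
where $B(n)$ is the quantity from equation~(C.1) of \cite{mlpf} measuring the conditional $\mathbb{L}_2$-discrepancy between the two models along the particle trajectories, and $C(t,\varphi)$ depends only on $t$, on $|\varphi|_1$, and on the uniform bounds on $G$; by (A1--2) all of $a,b,\xi_1,\xi_2,G$ are bounded, so exactly as in Remark~\ref{rm:C_deterministic_1} the constant $C(t,\varphi)$ may be taken deterministic. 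Taking expectations and using Jensen's inequality $\mathbb{E}[B(n)^{s/2}]\le\mathbb{E}[B(n)]^{s/2}$, the claim reduces to proving $\mathbb{E}[B(n)]\le C\Delta_l^{1/2}$.

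The core of the argument is then this last bound. By the $C^p$ inequality, $B(n)$ decomposes into (i) a term stemming from the maximal-coupling resampling step (Algorithm~\ref{alg:max_couple}) and (ii) terms stemming from the discrepancy of the coupled sampling step. For (ii) the inputs are the particle-level bound $\mathbb{E}[\|\tilde Z^{l,M}_t-\tilde Z^{l-1,M}_t\|^k]\le C\Delta_l^{k/2}$ from Lemma~\ref{lm:Z_rate} and the unnormalised-filter increment bound $\mathbb{E}[|(\gamma^{l,M}_t-\gamma^{l-1,M}_t)(\varphi)|^k]\le C\Delta_l^{k/2}|\varphi|_1^k$ from Lemma~\ref{lm:eta_ml}; these give an $\mathcal{O}(\Delta_l^{1/2})$ contribution. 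For (i), the higher-moment rates of Lemma~\ref{lm:Z_rate} constitute an in-expectation (over all randomness) version of Assumption~D.1 of \cite{mlpf}, so that running the proof of Theorem~D.5 of \cite{mlpf} under expectation rather than conditionally bounds the resampling contribution by $\mathcal{O}(\Delta_l^{1/2})$ as well. Combining (i) and (ii) gives $\mathbb{E}[B(n)]\le C\Delta_l^{1/2}$, hence $\mathbb{E}[B(n)]^{s/2}\le C\Delta_l^{s/4}$, which is the asserted bound.

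The main obstacle is not a single hard estimate but the care required to invoke the \cite{mlpf} machinery in the McKean--Vlasov setting. One must check that, after conditioning on $\mathcal{G}$, the hypotheses of Theorem~C.1, Lemma~C.3 and Theorem~D.5 of \cite{mlpf} genuinely hold -- in particular that the constants there (which in \cite{mlpf} may be random) are deterministic here thanks only to the boundedness assumptions, and that the moment/rate conditions of Assumption~D.1 are verified in expectation over all randomness rather than almost surely. The latter point is the delicate one: because the transition kernels $P^{l,M}_{\mu^{l,M},\cdot}$ are themselves built from the auxiliary particle system, the D.1-type conditions cannot be pathwise, and it is precisely the expectation bounds of Lemma~\ref{lm:Z_rate} (together with Lemma~\ref{lm:eta_ml}) that rescue the argument and allow the identification of $\mathbb{E}[B(n)]$ with an $\mathcal{O}(\Delta_l^{1/2})$ quantity.
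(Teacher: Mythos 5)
Your proposal is correct and follows essentially the same route as the paper: conditioning on $\sigma(\mathcal{F}^{l-1,M}_t,\mathcal{F}^{l,M}_t)$, invoking Theorem C.1 and Lemma C.3 of \cite{mlpf} with a deterministic constant as in Remark \ref{rm:C_deterministic_1}, applying Jensen to reduce to $\mathbb{E}[B(n)]\leq C\Delta_l^{1/2}$, and establishing that bound by splitting $B(n)$ via the $C^p$ inequality into the maximal-coupling contribution (handled by an in-expectation version of Theorem D.5, with Assumption D.1 verified through Lemma \ref{lm:Z_rate}) and the remaining terms (handled by Lemmata \ref{lm:Z_rate} and \ref{lm:eta_ml}). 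No substantive differences from the paper's argument.
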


\begin{proof}[\bf{Proof of Theorem \ref{thm:ml_error}:}]
    \begin{equation*}
        \begin{split}
            \;& \mathbb{E}\left[ (\pi_t(\varphi)-\widehat{\pi^L_t(\varphi)})^2 \right]\\
            =\;& \mathbb{E}\bigg[ \Big((\pi_t(\varphi) - \pi_t^L(\varphi)) + (\pi_t^{0}(\varphi) - \pi_t^{0,M_0}(\varphi)) + ( \pi_t^{0,M_0}(\varphi) - \pi_t^{0,M_0,N_0}(\varphi)) \\
            +\;& \sum_{l=1}^L(\pi^{l}_t - \pi^{l-1}_t-\pi^{l,M_l}_t +\pi^{l-1,M_l}_t)(\varphi)  + \sum_{l=1}^L(\pi^{l,M_l}_t - \pi^{l-1,M_l}_t-\pi^{l,M_l,N_l}_t +\pi^{l-1,M_l,N_l}_t)(\varphi)\Big)^2 \bigg]\\
            \leq \;&C(\pi_t(\varphi)-\pi_t^L(\varphi))^2 + C\mathbb{E}\left[ (\pi_t^{0}(\varphi) - \pi_t^{0,M_0}(\varphi)) ^2 \right] + C\mathbb{E}\left[  \left( \sum_{l=1}^L(\pi^{l}_t - \pi^{l-1}_t-\pi^{l,M_l}_t +\pi^{l-1,M_l}_t)(\varphi)\right)^2\right]\\
            +\;& C\mathbb{E}\left[ ( \pi_t^{0,M_0}(\varphi) - \pi_t^{0,M_0,N_0}(\varphi))^2\right] + C\mathbb{E}\left[\left(\sum_{l=1}^L(\pi^{l,M_l}_t - \pi^{l-1,M_l}_t-\pi^{l,M_l,N_l}_t +\pi^{l-1,M_l,N_l}_t)(\varphi)\right)^2\right].
        \end{split}
    \end{equation*}
    The first term is bounded using $C\Delta_L^2$ by Lemma \ref{lm:pi_eta_estimates_1_part_2}, the bounds for the second and third terms follow from $C^p$ inequality and Lemma \ref{lm:pi_ml}. The fourth is the error of a standard particle filter stated in Lemma \ref{lm:pf1}. For the fifth term we use Lemma \ref{lm:mlpf_increment} as follows
    
    \begin{equation*}
        \begin{split}
            \;& \mathbb{E}\left[\left(  \sum_{l=1}^L(\pi_{t}^{l,M_l} - \pi_{t}^{l-1,M_l}-\pi_{t}^{l,M_l,N_l} +\pi_{t}^{l-1,M_l,N_l})(\varphi)   \right)^2\right]\\
            =\;& \sum_{l=1}^L\mathbb{E}\left[\left(  (\pi_{t}^{l,M_l} - \pi_{t}^{l,M_l}-\pi_{t}^{l-1,M_l,N_l} +\pi_{t}^{l-1,M_l,N_l})(\varphi)    \right)^2\right]
            +\sum_{p\not=q} \\\;& \mathbb{E}\left[ (\pi_{t}^{p,M_{p}} - \pi_{t}^{p-1,M_{p}}-\pi_{t}^{p,M_{p},N_{p}} +\pi_{t}^{p-1,M_{p},N_{p}})(\varphi)  \right]\mathbb{E}\left[ (\pi_{t}^{q,M_{q}} - \pi_{t}^{q-1,M_{q}}-\pi_{t}^{q,M_{q},N_{q}} +\pi_{t}^{q-1,M_{q},N_{q}})(\varphi) \right]\\
            \leq\;& C\sum_{l=1}^L\frac{\Delta_l^{1/2}}{N_l} + C\sum_{p\not=q}\frac{\Delta_p^{1/4}\Delta_q^{1/4}}{N_pN_q}\\
            \leq\;& C\sum_{l=1}^L\frac{\Delta_l^{1/2}}{N_l},
        \end{split}
    \end{equation*}
	where for the last line first notice that Assumtion \hyperref[assump:A3]{(A3)} implies that $N_l\geq (L-l)^2/2$ for $0\leq l\leq L-1$ then using Cauchy Schwarz inequality
	\begin{equation*}
		 C\sum_{p\not=q}\frac{\Delta_p^{1/4}\Delta_q^{1/4}}{N_pN_q}\leq C\left(\sum_{l=1}^L\frac{\Delta_l^{1/4}}{N_l}\right)^2\leq C\left(\sum_{l=1}^L\frac{1}{N_l}\right) \left(\sum_{l=1}^L\frac{\Delta_l^{1/2}}{N_l}\right)< C\left(1+\frac{1}{2}\sum_{l=1}^{\infty}\frac{1}{l^2}\right) \left(\sum_{l=1}^L\frac{\Delta_l^{1/2}}{N_l}\right).
	\end{equation*}
\end{proof}

\end{document}